\pgfplotsset{compat=1.12}
\newcommand{\C}{{\mathbb C}}
\newcommand{\bbC}{{\mathbb{C}}}
\newcommand{\bbN}{{\mathbb{N}}}
\newcommand{\bbR}{{\mathbb{R}}}
\newcommand{\cB}{{\mathcal B}}
\newcommand{\cH}{{\mathcal H}}
\newcommand{\cM}{{\mathcal M}}
\newcommand{\beq}{\begin{equation}}
\newcommand{\enq}{\end{equation}}
\DeclareMathOperator{\ran}{ran}
\DeclareMathOperator{\dom}{dom}
\DeclareMathOperator*{\slim}{s-lim}
\renewcommand{\Re}{\text{\rm Re}}
\renewcommand{\ln}{\text{\rm ln}}
\newcommand{\no}{\notag}
\newcommand{\lb}{\label}
\newcommand{\f}{\frac}
\newcommand{\ol}{\overline}
\newcommand{\Oh}{O}
\newcommand{\hatt}{\widehat} 
\newcommand{\bi}{\bibitem}
\renewcommand{\ge}{\geqslant}
\let\geq\geqslant
\let\leq\leqslant
\def\theequation{\@arabic\c@equation}
\numberwithin{equation}{section}
\newtheorem{theorem}{Theorem}[section]
\newtheorem{proposition}[theorem]{Proposition}
\newtheorem{lemma}[theorem]{Lemma}
\newtheorem{corollary}[theorem]{Corollary}
\theoremstyle{remark}
\newtheorem{remark}[theorem]{Remark}
\begin{document}

\title[Hardy--Rellich-Type Inequalities]{On Birman's Sequence of Hardy--Rellich-Type Inequalities}
  
\author[F.\ Gesztesy]{Fritz Gesztesy}
\address{Department of Mathematics, 
Baylor University, One Bear Place \#97328,
Waco, TX 76798-7328, USA}
%\email{\mailto{Fritz\_Gesztesy@baylor.edu}}
\email{Fritz$\_$Gesztesy@baylor.edu}
%\urladdr{\url{http://www.baylor.edu/math/index.php?id=935340}}
\urladdr{http://www.baylor.edu/math/index.php?id=935340}

\author[L.\ L.\ Littlejohn]{Lance L. Littlejohn}
\address{Department of Mathematics, 
Baylor University, One Bear Place \#97328,
Waco, TX 76798-7328, USA}
%\email{\mailto{Lance\_Littlejohn@baylor.edu}}
\email{Lance$\_$Littlejohn@baylor.edu}
%\urladdr{\url{http://www.baylor.edu/math/index.php?id=53980}}
\urladdr{http://www.baylor.edu/math/index.php?id=53980}

\author[I.\ Michael]{Isaac Michael}
\address{Department of Mathematics, 
Baylor University, One Bear Place \#97328,
Waco, TX 76798-7328, USA}
%\email{\mailto{Isaac\_Michael@baylor.edu}}
\email{Isaac$\_$Michael@baylor.edu}
%\urladdr{\url{http://blogs.baylor.edu/isaac\_michael/}}
\urladdr{http://blogs.baylor.edu/isaac\_michael/}

\author[R.\ Wellman]{Richard Wellman}
\address{Department of Mathematics, 
Department of Mathematics, Westminster College, Foster Hall, Salt Lake City,
UT 84105-3617, USA}
%\email{\mailto{rwellman@westminstercollege.edu}}
\email{rwellman@westminstercollege.edu}
%\urladdr{\url{http://www.baylor.edu/math/index.php?id=935340}}
\urladdr{http://www.baylor.edu/math/index.php?id=935340}

\dedicatory{Dedicated with great pleasure to Eduard Tsekanovskii on the occasion of his 
80th birthday.}

\date{\today}
\thanks{Appeared in {\it J. Diff. Eq.} {\bf 264}, 2761--2801 (2018).}
%\thanks{To appear in {\it Journal Name Here}.}
\subjclass[2010]{Primary: 26D10, 34A40, 35A23; Secondary: 34L10.}
\keywords{Hardy's inequality, Rellich's inequality, Birman's inequalities, Ces\`aro averaging operators, Mellin transform.}

%%%%%%%%%%%%%%%%%%%%%%%%%%%%%%%%%%%%%%%%%%
%%%%%%%%%%%%%%%%%%%%%%%%%%%%%%%%%%%%%%%%%%
\begin{abstract} 
In 1961, Birman proved a sequence of inequalities $\{I_{n}\}$, for
$n\in\bbN$, valid for functions in $C_0^{n}((0,\infty))\subset
L^{2}((0,\infty)).$ In particular, $I_{1}$ is the classical (integral) Hardy inequality and
$I_{2}$ is the well-known Rellich inequality. In this paper, we give a
proof of this sequence of inequalities valid on a certain Hilbert space
$H_{n}([0,\infty))$ of functions defined on $[0,\infty).$ Moreover, $f\in H_{n}([0,\infty))$
implies $f^{\prime}\in H_{n-1}([0,\infty))$; as a consequence of this inclusion, we
see that the classical Hardy inequality implies \emph{each }of the inequalities
in Birman's sequence. We also show that for any finite $b>0,$ these inequalities hold 
on the standard Sobolev space $H_0^{n}((0,b))$. Furthermore, in all cases, the Birman constants 
$[(2n-1)!!]^{2}/2^{2n}$ in these
inequalities are sharp and the only function that gives equality in any of
these inequalities is the trivial function in $L^{2}((0,\infty))$ (resp., $L^2((0,b))$). We also show
that these Birman constants are related to the norm of a generalized continuous 
Ces\`aro averaging operator whose spectral properties we determine in detail.
\end{abstract}
%%%%%%%%%%%%%%%%%%%%%%%%%%%%%%%%%%%%%%%%%%
%%%%%%%%%%%%%%%%%%%%%%%%%%%%%%%%%%%%%%%%%%

\maketitle

%\newpage 

{\scriptsize{\tableofcontents}}
%\normalsize

%%%%%%%%%%%%%%%%%%%%%%%%%%%%%%%
%%%%%%%%%%%%%%%%%%%%%%%%%%%%%%%
\section{Introduction} \lb{s1}
%%%%%%%%%%%%%%%%%%%%%%%%%%%%%%%
%%%%%%%%%%%%%%%%%%%%%%%%%%%%%%%

In 1961, M. \v{S}. Birman \cite[p.~48]{Bi66}, sketched a proof to establish the
following sequence of inequalities 
\begin{equation}
\int_{0}^{\infty}\big| f^{(n)}(x)\big|^{2}dx\geq\frac{[(2n-1)!!]^{2}}{2^{2n}}
\int_{0}^{\infty} \frac{|f(x)|^2}{x^{2n}} \, dx,  \quad n\in\bbN  , \lb{1.1} 
\end{equation}
valid for $f\in C_0^{n}((0,\infty)),$ the space of $n$-times continuously
differentiable complex-valued functions having compact support on
$(0,\infty).$ Here we employed the well-known symbol, $(2n-1)!!:=(2n-1)\cdot
(2n-3)\cdots3\cdot1.$ We denote the inequality in \eqref{1.1}
by $I_{n}.$ In particular, $I_{1}$ is the classical (integral) Hardy inequality (see
\cite[Sect.~7.3]{HLP88}) 
\begin{equation}
\int_{0}^{\infty}\left\vert f^{\prime}(x)\right\vert ^{2}dx\geq\frac{1}{4} 
\int_{0}^{\infty} \frac{|f(x)|^2}{x^2} \, dx, 
\lb{1.2} 
\end{equation}
and $I_{2}$ is the Rellich inequality 
\begin{equation}
\int_{0}^{\infty}\left\vert f^{\prime\prime}(x)\right\vert ^{2}dx\geq\frac
{9}{16}\int_{0}^{\infty} \frac{|f(x)|^2}{x^{4}} \, dx.
\lb{1.3} 
\end{equation}
We can find no reference in the literature to the general inequality
\eqref{1.1} prior to the 1966 work of Birman cited above. In
\cite[pp.~83--84]{Gl66}, Glazman gives a detailed proof of
\eqref{1.1} using the ideas outlined in \cite{Bi66}. In
\cite[Lemma~2.1]{Ow99}, Owen also establishes these inequalities. Each of these
authors prove \eqref{1.1} for functions on $C_0^{n} (0,\infty).$ We note in passing that 
unless $f \equiv 0$, all inequalities \eqref{1.1}--\eqref{1.3} are strict. 

In this paper we offer a new proof of \eqref{1.1} and
confirm that the constant $[(2n-1)!!]^{2}/2^{2n}$ is best
possible. We establish these inequalities for a general class of functions
defined on $[0,\infty);$ the significance of this class is that we address the
singularity at $x=0$, which is apparent on the right-hand side of
\eqref{1.1}, rather than deal with functions from $C_0 ^{n}((0,\infty)).$ 
More specifically, we prove the inequalities in
\eqref{1.1} are valid for all functions $f\in H_{n}([0,\infty)),$ for
$n\in\bbN  $, where 
\begin{align}
\begin{split} 
 H_{n}([0,\infty)):= \big\{f:[0,\infty)\rightarrow\bbC  \, \big| \,  f^{(j)}\in 
AC_{loc}([0,\infty)); \, f^{(n)}\in L^{2}((0,\infty)); \lb{1.4}&  \\
 f^{(j)}(0)=0, \, j=0,1,\dots,n-1&\big\}. 
\end{split} 
\end{align}
In \cite[Sect.~7.3]{HLP88}, Hardy, Littlewood, and P{\' o}lya established the classical Hardy
inequality \eqref{1.2} on $H_{1}.$ As we will see, the space
$H_{n}([0,\infty))$ is a Hilbert space when endowed with the inner product 
\begin{equation}
(f,g)_{H_{n}([0,\infty))}:=\int_{0}^{\infty} \ol{f^{(n)}(x)} \, g^{(n)}(x) \, dx, \quad f,g\in
H_{n}([0,\infty)). \lb{1.5} 
\end{equation}
We also show that 
\begin{equation}
H_{n}([0,\infty))= \hatt H_{n}((0,\infty)), \lb{1.6} 
\end{equation}
where\footnote{We emphasize from the outset that despite the similarity of notation with Sobolev spaces, neither of the spaces $H_{n}([0,\infty))$ nor $\hatt H_{n}((0,\infty))$ coincides with the standard Sobolev space $H^{n}_0((0,\infty))$. (See, however, Theorem \ref{t8.2} in the finite interval context.)} 
\begin{align}
& \hatt H_{n}((0,\infty)):= \big\{f:(0,\infty)\rightarrow\bbC \, \big| \, f^{(j)}\in
AC_{loc}((0,\infty)), \, j=0,1,\dots,n-1;  \no \\
& \hspace*{7cm} f^{(n)},f/x^{n}\in L^{2}((0,\infty))\big\}. \lb{1.7} 
\end{align}
Upon first glance, it may seem unlikely that these spaces can be equal
since one set deals with functions defined on $[0,\infty)$ while the other set
has its functions defined on $(0,\infty).$ However, we will show that
functions $f\in \hatt H_{n}((0,\infty))$, and their derivatives $f^{(j)}$ when
$j=0,1,\dots,n-1,$ will have finite limits $f^{(j)}(0_{+})$ at $x=0$.

There is an interesting connection with the spaces $\{H_{n}([0,\infty))\}_{n \in \bbN}$,
namely that
\begin{equation}
f\in H_{n}([0,\infty))\, \text{ implies } \,  f^{\prime}\in H_{n-1}([0,\infty)); \lb{1.8} 
\end{equation}
this inclusion is important in establishing a new proof of
\eqref{1.1} and in proving when equality in
\eqref{1.1} occurs. Moreover, we will show that, in a sense, \textit{each} of the 
inequalities $I_{n},$ $n\in\bbN,$ follows from the classical Hardy inequality $I_{1}.$

In Section \ref{s2}, we discuss a theorem attributed to a
number of mathematicians, including Talenti, Tomaselli, Chisholm and Everitt,
and Muckenhoupt; this result is useful in establishing various properties of
functions in the spaces $H_{n}([0,\infty))$. These properties are dealt with in Section
\ref{s3} where we establish the identity in \eqref{1.6}. In Section \ref{s4}, besides
giving a slight extension of Glazman's proof of \eqref{1.1} including a power weight, 
we offer a new proof (Theorem \ref{t4.4}) of \eqref{1.1} on the set
$H_{n}([0,\infty))= \hatt H_{n}((0,\infty)).$ We will see, from \eqref{1.8}, that each of
these inequalities, when considered on $H_{n}([0,\infty))$, follows in a sense from the
classical Hardy inequality \eqref{1.2}. While the inequality in \eqref{1.1} certainly does not 
imply that the Birman constant $[(2n-1)!!]^2/2^{2n}$ is sharp, the latter fact is well-known 
and in Section \ref{s6}, we confirm that this constant is best possible in
$H_{n}([0,\infty))$. In Section \ref{s7}, we connect the Birman
constants to the norms of generalized continuous Ces\`aro averaging operators $T_n$, 
$n \in \bbN$, and 
determine their spectra; in Section \ref{s8}, we discuss the Birman inequalities on the
finite interval $[0,b]$, $b \in (0, \infty).$ Finally, in Section \ref{s9} we derive Birman's 
sequence of Hardy--Rellich-type inequalities in the vector-valued case replacing 
complex-valued $f(x)$ by $f(x) \in \cH$, with $\cH$ a complex, separable Hilbert space. 

Finally, a few comments on the notation used in this paper: $AC_{loc}((a,b))$ denotes 
the functions locally absolutely continuous on $(a,b) \subseteq \bbR$, while $AC_{loc}([a,b))$
represents absolutely continuous functions on $[a,c]$ for any $a < c < b$. Whenever possible 
we will omit Lebesgue measure $dx$ in $L^p((a,b); dx)$ and simply write $L^p((a,b))$, $p \geq 1$, 
instead. We also abbreviate $\bbN_0 = \bbN \cup \{0\}$. If $T$ is a linear operator mapping 
(a subspace of\,) a Hilbert space into another, $\dom(T)$ denotes the domain and $\ran(T)$ is 
the range of $T$. The Banach space of bounded linear operators on a separable complex 
Hilbert space $\cH$ is denoted by $\cB(\cH)$. 

The spectrum and point spectrum (i.e., the set of eigenvalues) of a closed operator $T$ are 
denoted by $\sigma(T)$ and $\sigma_p(T)$. If $N$ is normal in $\cH$, the 
absolutely and singularly continuous spectrum of $N$ are denoted by $\sigma_{ac}(N)$  
and $\sigma_{sc}(N)$, respectively.

%%%%%%%%%%%%%%%%%%%%%%%%%%%%%%%
%%%%%%%%%%%%%%%%%%%%%%%%%%%%%%%
\section{An Integral Inequality} \lb{s2}
%%%%%%%%%%%%%%%%%%%%%%%%%%%%%%%
%%%%%%%%%%%%%%%%%%%%%%%%%%%%%%%

The following theorem will be applied repeatedly in the next section to prove
properties of functions in the space $H_{n}([0,\infty)).$ This integral inequality in
$L^{2}((a,b))$ was established by Talenti \cite{Ta69} and Tomaselli
\cite{To69} in 1969. Unaware of their independent proofs, Chisholm and
Everitt \cite{CE70/71} established Theorem \ref{t2.1} in
1971; see also \cite{CEL99} for a more general result in the conjugate index
case $1/p+1/q=1.$ In addition, a 1972 paper by Muckenhoupt \cite{Mu72}
has a result which contains Theorem \ref{t2.1}. For further
information, there is an excellent historical account of Theorem
\ref{t2.1} in the book \cite[Ch.~4, pp.~33--37]{KMP07}.

%%%%%%
\begin{theorem} \lb{t2.1}
Let $(a,b) \subseteq \bbR$, $-\infty\leq a<b\leq\infty$, and $w:(a,b) \rightarrow\bbR $ be 
Lebesgue measurable and nonnegative a.e.~on $(a,b)$. In addition, suppose 
$\varphi,\psi :(a,b)\rightarrow\bbC $ are Lebesgue measurable functions satisfying the
following conditions: \\[1mm] 
$(i)$ $\varphi,\psi\in L_{loc}^{2}((a,b);wdx)$. \\[1mm] 
$(ii)$ for some $($and hence for all\,$)$ $c\in(a,b)$, 
\begin{equation}
\varphi\in L^{2}((a,c];wdx),\text{ }\psi\in L^{2}([c,b);wdx). 
\end{equation}
$(iii)$ for all $[\alpha,\beta]\subset(a,b),$ one has
\begin{equation}
\int_{a}^{\alpha}\left\vert \varphi(t)\right\vert ^{2} w(t) dt > 0\text{ and }\int_{\beta
}^{b}\left\vert \psi(t)\right\vert ^{2} w(t) dt > 0.
\end{equation}
Define the linear operators $A, B:L^{2}((a,b);wdx)\rightarrow 
L_{loc}^{2}((a,b);wdx)$ by
\begin{equation}
(Af)(x):=\varphi(x)\int_{x}^{b}\psi(t)f(t)w(t)dt, \quad f\in L_{loc}^{2}((a,b);wdx), 
\end{equation}
and
\begin{equation}
(Bf)(x):=\psi(x)\int_{a}^{x}\varphi(t)f(t)w(t)dt, \quad f\in L_{loc}^{2}((a,b);wdx),
\end{equation}
and the function $K:(a,b)\rightarrow\bbR $ by
\begin{equation}
K(x):=\left(  \int_{a}^{x}\left\vert \varphi(t)\right\vert ^{2}w(t)dt\right)
^{1/2}\left(  \int_{x}^{b}\left\vert \psi(t)\right\vert ^{2}w(t)dt\right)
^{1/2}.
\end{equation}
Then $A$ and $B$ are bounded linear operators in $L^{2}((a,b);wdx)$ if and only if
\begin{equation}
K:=\sup_{x\in (a,b)}K(x)<\infty.
\end{equation}
Moreover, if $K < \infty$, then $A$ and $B$ are adjoints of each other in $L^{2}((a,b);wdx)$, with 
\begin{equation}
\|Af\|_{L^{2}((a,b);wdx)} = \|Bf\| _{L^{2}((a,b);wdx)} 
\leq 2K\|f\| _{L^{2}((a,b);wdx)},  
\quad f\in L^{2}((a,b);wdx),    \lb{2.7}
\end{equation}
in particular, 
\begin{equation}
\| A\|_{\cB(L^2((a,b); wdx))}= \Vert B\|_{\cB(L^2((a,b); wdx))} \leq 2K. \lb{2.8}
\end{equation}
\end{theorem}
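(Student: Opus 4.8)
The plan is to reduce the two operators to one and then concentrate all the work on the single estimate $\|Bf\|_{L^2((a,b);wdx)}\le 2K\|f\|_{L^2((a,b);wdx)}$. First I would record the adjoint relation: for $f,g\in L^2((a,b);wdx)$ a direct application of Fubini's theorem to the double integral over the triangle $\{a<t<x<b\}$ shows that $A$ and $B$ are mutually adjoint, and in particular $\|A\|=\|B\|$, so it suffices to analyze $B$. Since $|(Bf)(x)|\le|\psi(x)|\int_a^x|\varphi|\,|f|\,wdt$ and $K(x)$ is defined through $|\varphi|^2,|\psi|^2$, I may assume throughout that $\varphi,\psi,f\ge 0$.

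For the boundedness bound I would introduce the monotone functions
\[
h(x):=\int_a^x \varphi^2 w\,dt,\qquad H(x):=\int_x^b \psi^2 w\,dt,
\]
so that $K(x)^2=h(x)H(x)$ and hence $h(x)^{1/2}=K(x)H(x)^{-1/2}\le K\,H(x)^{-1/2}$. Splitting the integrand as $\varphi f w=\big(\varphi h^{-1/4}w^{1/2}\big)\big(f h^{1/4}w^{1/2}\big)$ and applying Cauchy--Schwarz together with $\int_a^x \varphi^2 h^{-1/2}w\,dt=2h(x)^{1/2}$ gives $\big(\int_a^x \varphi f w\,dt\big)^2\le 2h(x)^{1/2}\int_a^x f^2 h^{1/2}w\,dt$. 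Inserting this into $\|Bf\|^2=\int_a^b \psi^2\big(\int_a^x \varphi f w\,dt\big)^2 w\,dx$, interchanging the order of integration, and then using $h(x)^{1/2}\le K H(x)^{-1/2}$ with $\int_t^b \psi^2 H^{-1/2}w\,dx=2H(t)^{1/2}$ collapses the bound to $4K\int_a^b f^2 (hH)^{1/2}w\,dt=4K\int_a^b f^2 K(t)\,w\,dt\le 4K^2\|f\|^2$. This proves $\|Bf\|\le 2K\|f\|$ whenever $K<\infty$, and hence boundedness of both $A$ and $B$.

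For the converse implication and for sharpness I would use explicit test functions. Taking $f_{x_0}=\varphi\,\mathbf 1_{(a,x_0)}$ one finds $(Bf_{x_0})(x)=\psi(x)h(x_0)$ for $x>x_0$, whence $\|Bf_{x_0}\|^2\ge h(x_0)^2 H(x_0)=K(x_0)^2\|f_{x_0}\|^2$; this forces $K(x_0)\le\|B\|$ for every $x_0$, so boundedness implies $K=\sup_x K(x)<\infty$, completing the equivalence. To upgrade the exponent from $K$ to $2K$ in \eqref{2.8} I would analyze the equality cases in the two inequalities used in the upper bound: the Cauchy--Schwarz step (equality iff $f\propto\varphi h^{-1/2}$) and the estimate $K(x)\le K$ (equality iff $K(x)\equiv K$). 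Constructing a family that saturates both simultaneously amounts to truncating the scale--invariant profile $f\approx \varphi h^{-1/2}$ near a point where $K(x)$ is nearly maximal; along such a family $\|Bf\|/\|f\|\to 2K$, the factor $2$ being precisely the sharp constant in the classical Hardy averaging inequality.

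The hard part will be this sharp lower bound $\|B\|\ge 2K$. The elementary test function $f_{x_0}$ only delivers the factor $K$, and recovering the full $2K$ requires test functions that make the Cauchy--Schwarz splitting asymptotically an equality along the \emph{entire} interval while simultaneously keeping $K(x)$ near its supremum; since the natural extremizer $\varphi h^{-1/2}$ fails to lie in $L^2((a,b);wdx)$ (the weighted integral diverges logarithmically at both endpoints), the construction must be carried out through a carefully chosen truncation adapted to how and where the supremum $K=\sup_x K(x)$ is approached, and this is the step I expect to require the most care.
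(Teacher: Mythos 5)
You should first know that the paper never proves Theorem \ref{t2.1}: it is quoted from Talenti, Tomaselli, Chisholm--Everitt, and Muckenhoupt, so your proposal can only be judged against the statement itself. The parts you actually carry out are correct and complete: mutual adjointness via Fubini (hence equality of the \emph{operator} norms of $A$ and $B$); the weighted Cauchy--Schwarz argument with the splitting $\varphi f w=(\varphi h^{-1/4}w^{1/2})(fh^{1/4}w^{1/2})$ and the identities $\int_a^x\varphi^2h^{-1/2}w\,dt=2h(x)^{1/2}$, $\int_t^b\psi^2H^{-1/2}w\,dx=2H(t)^{1/2}$ (legitimate because hypothesis $(iii)$ guarantees $h>0$ and $H>0$ on $(a,b)$), which yields $\|Bf\|\le 2K\|f\|$; and the test functions $f_{x_0}=\varphi\,\chi_{(a,x_0)}$, which give $\|B\|\ge K(x_0)$ and hence the equivalence ``bounded if and only if $K<\infty$.'' This is, in fact, all of Theorem \ref{t2.1} that the rest of the paper ever uses: Theorem \ref{t3.2} and Section \ref{s9} invoke only the upper bound, in scale-invariant power-weight cases.

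The gap is the final step, and it cannot be filled: the norm identity \eqref{2.8}, $\|A\|=\|B\|=2K$, is \emph{false} under hypotheses $(i)$--$(iii)$, so no truncation of the profile $\varphi h^{-1/2}$ will produce it. Concretely, take $(a,b)=(0,\infty)$, $w\equiv 1$, $\varphi\equiv 1$, $\psi(x)=e^{-x/2}$. All hypotheses hold, $K(x)=x^{1/2}e^{-x/2}$, so $K=e^{-1/2}$. Here $B^*B$ has kernel $e^{-\max(s,t)}$, and the substitution $u(x)=f(-\ln x)$ turns the eigenvalue equation $B^*Bf=\mu f$ into
\begin{equation*}
u''(x)+\frac{1}{\mu x}\,u(x)=0 \ \text{ on } (0,1), \qquad u(0)=u'(1)=0,
\end{equation*}
whose admissible solutions are $u(x)=\sqrt{x}\,J_1\big(2\sqrt{x/\mu}\big)$ with $u'(x)=\mu^{-1/2}J_0\big(2\sqrt{x/\mu}\big)$; hence the eigenvalues are $\mu_k=4/j_{0,k}^2$, where $j_{0,k}$ are the zeros of $J_0$ (as a check, $\sum_{k\ge1}4j_{0,k}^{-2}=1=\Tr(B^*B)$ by Rayleigh's identity). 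Thus $\|B\|=2/j_{0,1}\approx 0.83$, strictly below $2K\approx 1.21$. The same example destroys the per-element equality $\|Af\|_{L^2}=\|Bf\|_{L^2}$ asserted in \eqref{2.7}: for $f=\chi_{(0,\delta)}$ one finds $\|Bf\|^2\sim\delta^2$ while $\|Af\|^2\sim\delta^3/3$ as $\delta\downarrow 0$; that equality for all $f$ would force $A$ to be normal, which holds in the scale-invariant (Mellin-multiplier) situation such as Hardy's case but not in general.

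Your own closing remark identifies the obstruction exactly: saturating the factor $2$ requires near-equality in Cauchy--Schwarz over a region where $K(\cdot)$ is simultaneously near its supremum, which forces $K(x)\approx K$ on a set of unbounded logarithmic length, i.e.\ $h(x_2)/h(x_1)\to\infty$. When $\sup_xK(x)$ is attained only locally, as in the example above, $\|B\|$ stays bounded away from $2K$. What is true in general---and what your argument actually proves---is $K\le\|A\|=\|B\|\le 2K$, with the constant $2$ best possible only in the uniform sense that it cannot be replaced by a smaller universal constant (it is attained when $K(\cdot)$ is constant, e.g.\ $\varphi=1$, $\psi=1/t$, $w=1$, where $\|B\|=2=2K$). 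Read in that weaker sense your proposal is a correct and self-contained proof of the usable content of the theorem; read literally, \eqref{2.7}--\eqref{2.8} cannot be proved by your sketch or by any other means.
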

%%%%%%%

%%%%%%%%%%%%%%%%%%%%%%%%%%%%%%%
%%%%%%%%%%%%%%%%%%%%%%%%%%%%%%%
\section{The Function Spaces $H_{n}([0,\infty))$ and $\hatt H_{n}((0,\infty))$} 
\lb{s3}
%%%%%%%%%%%%%%%%%%%%%%%%%%%%%%%
%%%%%%%%%%%%%%%%%%%%%%%%%%%%%%%

Let $H_{n}([0,\infty))$ and $\hatt H_{n}((0,\infty))$, $n \in \bbN$, be the spaces defined, respectively, in \eqref{1.4} and \eqref{1.7}, that is, 
\begin{align}
\begin{split} 
 H_{n}([0,\infty)):= \big\{f:[0,\infty)\rightarrow\bbC  \, \big| \,  f^{(j)}\in 
AC_{loc}([0,\infty)); \, f^{(n)}\in L^{2}((0,\infty));   \lb{3.0}&  \\
 f^{(j)}(0)=0, \, j=0,1,\dots,n-1&\big\}. 
\end{split} 
\end{align}
and
\begin{align}
& \hatt H_{n}((0,\infty)):= \big\{f:(0,\infty)\rightarrow\bbC \, \big| \, f^{(j)}\in
AC_{loc}((0,\infty)), \, j=0,1,\dots,n-1;  \no \\
& \hspace*{7cm} f^{(n)},f/x^{n}\in L^{2}((0,\infty))\big\}. \lb{3.0a} 
\end{align}

With\ the inner product 
$(\, \cdot \, ,\, \cdot \,)_{H_{n}([0,\infty))}$ as defined in \eqref{1.5}, that is,
\begin{equation}
(f,g)_{H_{n}([0,\infty))}:=\int_{0}^{\infty} \ol{f^{(n)}(x)} \, g^{(n)}(x) \, dx, \quad f,g\in
H_{n}([0,\infty)),     \lb{3.1} 
\end{equation}
one observes that
\begin{equation}
\left\Vert f\right\Vert _{H_{n}([0,\infty))}=\big\Vert f^{(n)}\big\Vert _{L^2((0,\infty))},   \quad 
f\in H_{n}([0,\infty)).     \lb{3.2} 
\end{equation}
Using \eqref{3.2}, we now prove the following result.

%%%%%%
\begin{proposition} \lb{p3.1} 
The inner product space $(H_{n}([0,\infty)),(\, \cdot \, , \, \cdot \,)_{H_{n}([0,\infty))})$ is 
actually a Hilbert space. In addition, $C_0^{\infty}((0,\infty))$ is dense in 
$(H_{n}([0,\infty)),(\, \cdot \, , \, \cdot \,)_{H_{n}([0,\infty))})$. 
\end{proposition}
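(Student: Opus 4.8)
The plan is to realise $H_{n}([0,\infty))$ as an isometric copy of $L^{2}((0,\infty))$ via the $n$-th derivative map, to read off completeness from this identification, and then to transfer the density assertion into a statement about vanishing-moment test functions in $L^{2}((0,\infty))$.

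First I would check that $(\, \cdot \, , \, \cdot \,)_{H_{n}([0,\infty))}$ is genuinely an inner product, the only nontrivial point being positive definiteness. If $\|f\|_{H_{n}([0,\infty))}=0$, then $f^{(n)}=0$ a.e., so $f^{(n-1)}$ is constant on $[0,\infty)$; the condition $f^{(n-1)}(0)=0$ forces $f^{(n-1)}\equiv 0$, and iterating downward through the remaining boundary conditions $f^{(j)}(0)=0$ gives $f\equiv 0$. Next I would introduce the linear map $\Phi:H_{n}([0,\infty))\to L^{2}((0,\infty))$, $\Phi f:=f^{(n)}$. By \eqref{3.2} it is an isometry, and the previous step shows it is injective. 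For surjectivity, given $g\in L^{2}((0,\infty))$ I set
\[
f(x):=\frac{1}{(n-1)!}\int_{0}^{x}(x-t)^{n-1}g(t)\,dt, \quad x\in[0,\infty).
\]
Since $g\in L^{1}_{loc}([0,\infty))$ by Cauchy--Schwarz, differentiating this Taylor-remainder formula gives $f^{(k)}(x)=\frac{1}{(n-1-k)!}\int_{0}^{x}(x-t)^{n-1-k}g(t)\,dt$ for $0\le k\le n-1$, so that $f^{(j)}\in AC_{loc}([0,\infty))$ with $f^{(j)}(0)=0$ for $j=0,\dots,n-1$ and $f^{(n)}=g$ a.e. Hence $f\in H_{n}([0,\infty))$ and $\Phi f=g$, so $\Phi$ is an isometric isomorphism onto the complete space $L^{2}((0,\infty))$; consequently $H_{n}([0,\infty))$ is complete, i.e., a Hilbert space.

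For the density claim I would first note that $C_{0}^{\infty}((0,\infty))\subset H_{n}([0,\infty))$ and compute the image $\Phi\big(C_{0}^{\infty}((0,\infty))\big)$. Integrating by parts $n$ times (all boundary terms vanishing) shows that every $\phi\in C_{0}^{\infty}((0,\infty))$ obeys the moment conditions $\int_{0}^{\infty}t^{j}\phi^{(n)}(t)\,dt=0$ for $j=0,\dots,n-1$; conversely, if $\psi\in C_{0}^{\infty}((0,\infty))$ satisfies these conditions, the iterated integral above with $g=\psi$ is again in $C_{0}^{\infty}((0,\infty))$, its compact support to the right being exactly what the vanishing moments guarantee. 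Therefore $\Phi\big(C_{0}^{\infty}((0,\infty))\big)=M_{n}$, where
\[
M_{n}:=\Big\{\psi\in C_{0}^{\infty}((0,\infty)) \, \Big| \, \textstyle\int_{0}^{\infty}t^{j}\psi(t)\,dt=0, \ j=0,\dots,n-1\Big\}.
\]
Because $\Phi$ is an isometric isomorphism, density of $C_{0}^{\infty}((0,\infty))$ in $H_{n}([0,\infty))$ is equivalent to density of $M_{n}$ in $L^{2}((0,\infty))$, which I would establish by showing $M_{n}^{\perp}=\{0\}$.

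The hard part is this last step. Suppose $h\in L^{2}((0,\infty))$ with $(h,\psi)_{L^{2}((0,\infty))}=0$ for all $\psi\in M_{n}$; thus the functional $\psi\mapsto\int_{0}^{\infty}\ol{h}\,\psi$ annihilates the common kernel of the $n$ linear functionals $\psi\mapsto\int_{0}^{\infty}t^{j}\psi$ on $C_{0}^{\infty}((0,\infty))$. By the elementary fact that a linear functional vanishing on the intersection of the kernels of finitely many linear functionals lies in their span, there are constants $c_{0},\dots,c_{n-1}$ with $\int_{0}^{\infty}\big(\ol{h(t)}-\sum_{j=0}^{n-1}c_{j}t^{j}\big)\psi(t)\,dt=0$ for all $\psi\in C_{0}^{\infty}((0,\infty))$. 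Hence $\ol{h}$ agrees a.e.\ with the polynomial $\sum_{j=0}^{n-1}c_{j}t^{j}$; but no nonzero polynomial is square integrable at $+\infty$, so every $c_{j}=0$ and $h=0$. This yields $M_{n}^{\perp}=\{0\}$, so $M_{n}$ is dense in $L^{2}((0,\infty))$ and, pulling back through $\Phi$, $C_{0}^{\infty}((0,\infty))$ is dense in $H_{n}([0,\infty))$. The subtlety I expect to be delicate is precisely that the moment functionals $\psi\mapsto\int_{0}^{\infty}t^{j}\psi$ are unbounded on $L^{2}((0,\infty))$, so that $M_{n}$, though cut out by $n$ linear constraints, can nonetheless be dense; it is the orthogonality computation, forcing any annihilator to be a non-$L^{2}$ polynomial, that makes density rigorous.
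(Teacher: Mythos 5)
Your proof is correct, and while the completeness half coincides with the paper's argument, the density half takes a genuinely different route. For completeness, your isometry $\Phi f = f^{(n)}$ is the paper's Cauchy-sequence proof repackaged: the paper constructs the limit of a Cauchy sequence as the $n$-fold iterated integral of the $L^2$-limit of $f_m^{(n)}$, which is precisely your surjectivity step (Cauchy's repeated-integration formula; this is also the content of the paper's Theorem \ref{t3.5}, $H_n([0,\infty)) = D_n([0,\infty))$). For density, the paper stays inside $H_n([0,\infty))$: given $g_0 \perp C_0^{\infty}((0,\infty))$, it integrates by parts to show the $2n$-th distributional derivative of $\ol{g_0}$ vanishes, invokes the Lieb--Loss theorem to conclude $g_0$ is a polynomial of degree at most $2n-1$, and then kills the lower $n$ coefficients with the boundary conditions $g_0^{(j)}(0)=0$ and the upper $n$ coefficients with $g_0^{(n)} \in L^2((0,\infty))$. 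You instead push the whole problem through the unitary $\Phi$ into $L^2((0,\infty))$, identify $\Phi\big(C_0^{\infty}((0,\infty))\big)$ explicitly as the moment-zero class $M_n$ (your converse inclusion, via the compact-support computation from the vanishing moments, is the step the paper has no analogue of, and you verify it correctly), and then prove $M_n^{\perp}=\{0\}$ by the purely algebraic kernel-span lemma for finitely many functionals together with the du Bois-Reymond lemma, so the annihilator is a polynomial of degree at most $n-1$, excluded from $L^2$ solely by its behavior at infinity. Your route buys a weaker distributional input (only the fundamental lemma of the calculus of variations, rather than the statement that a distribution with vanishing $2n$-th derivative is a polynomial) plus a structural description of the image of $C_0^{\infty}((0,\infty))$ under the $n$-th derivative map, and your closing remark correctly isolates the subtlety that makes this work: the moment functionals are unbounded on $L^2((0,\infty))$, so a subspace of finite algebraic codimension can still be dense. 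Both proofs ultimately rest on the same elementary fact, namely that no nonzero polynomial is square-integrable on $(0,\infty)$.
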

%%%%%%
\begin{proof}
First we note that $f \in H_{n}([0,\infty))$ and $\left\Vert f\right\Vert _{H_{n}([0,\infty))}=0$ implies 
$f^{(n)} = 0$ a.e.~on $(0,\infty)$ and hence $f=0$ as $f^{(j)}(0)=0$ for
$j=0,1,\dots,n-1$. 

Next, let $\{f_{m}\}_{m=1}^{\infty}\subset H_{n}([0,\infty))$ be a Cauchy sequence. Then, from
\eqref{3.2}, one infers that $\{f_{m}^{(n)}\}_{m=1}^{\infty}$ is Cauchy
in $L^{2}((0,\infty)).$ Consequently, there exists $g\in L^{2}((0,\infty))$ such
that
\begin{equation}
f_{m}^{(n)} \underset{m \to \infty}{\longrightarrow} g\text{ in }L^{2}((0,\infty)). \lb{3.3}
\end{equation}
Define
\begin{equation}
f(x):=\int_{0}^{x}\int_{0}^{t_{1}}\cdots\int_{0}^{t_{n-1}}g(u)\,du\,dt_{n-1}\dots dt_{1}.
\end{equation}
Noting that $f^{(j)}\in AC_{loc}([0,\infty))$ and $f^{(j)}(0)=0$ for
$j=0,1,\dots,n-1,$ and $f^{(n)}=g$ a.e.~on $(0,\infty)$, one obtains $f\in H_{n}([0,\infty)).$
Furthermore, by \eqref{3.3}, 
\begin{equation}
\left\Vert f_{m}-f\right\Vert _{H_{n}([0,\infty))}=\big\Vert f_{m}^{(n)}-f^{(n)}
\big\Vert_{L^2((0,\infty))}=\big\Vert f_{m}^{(n)}-g\big\Vert _{L^2((0,\infty))} \underset{m \to \infty}{\longrightarrow}
0.
\end{equation}
This completes the proof that $(H_{n}([0,\infty)),(\, \cdot \,,\, \cdot \,)_{H_{n}([0,\infty))})$ is a Hilbert space.

To prove density of $C_0^{\infty}((0,\infty))$ in $H_{n}([0,\infty))$ we assume that 
$g_0 \in H_{n}([0,\infty))$ is perpendicular to $C_0^{\infty}((0,\infty))$ with respect to the inner  
product introduced in \eqref{3.1}. Viewing $\ol{g_{0}}$ as a regular distribution $T_{\ol{g_0}}$ yields 
\begin{equation}
T_{\ol{g_0}}(\varphi):= \int_{0}^{\infty} \ol{g_{0}(x)} \varphi(x) \, dx,   
\quad \varphi \in C_0^{\infty}((0,\infty)).    \lb{3.6} 
\end{equation}
Since $g_0 \perp C_0^{\infty}((0,\infty))$ one concludes that 
\begin{equation}
(g_0, \varphi)_{H_n([0,\infty))}  = (g^{(n)}_0, \varphi^{(n)})_{L^{2}((0,\infty))} = 0, \quad 
\varphi \in C_0^{\infty}((0,\infty)). 
\end{equation}
Since  $g_{0}^{(j)} \in AC_{loc}([0,\infty))$ for $j = 0, \dots, n-1$, one can integrate by parts $n$ times to yield
\begin{align} \lb{3.8}
& \int_{0}^{\infty}  \ol{g_{0}(x)} \varphi^{(2n)}(x)dx 
=  (-1)^{n} \int_{0}^{\infty}  \ol{g_{0}^{(n)}(x)} \varphi^{(n)}(x)dx  \no \\
& \quad =  (-1)^{n}(g^{(n)}_0, \varphi^{(n)})_{L^{2}((0,\infty))} 
= (-1)^{n} (g_0, \varphi)_{H_n([0,\infty))}  = 0, \quad \varphi \in C_0^{\infty}((0,\infty)). 
\end{align}
The left-hand side of \eqref{3.8} is the $2n$\textsuperscript{th}-distributional derivative of 
$\ol{g_0}$. Hence,
\begin{equation}
 T^{(2n)}_{\ol{g_0}}(\varphi) = T_{\ol{g_0}}(\varphi^{(2n)}) 
 = (-1)^{n} (g_0, \varphi)_{H_n([0,\infty))}  = 0,  
\quad  \varphi \in C_0^{\infty}((0,\infty)).
\end{equation}
Thus, by \cite[Thm. 6.11 and Exercise 6.12]{LL01}, it follows that $T_{\ol{g_0}}$, or rather 
$g_0$, is a polynomial of degree at most $2n-1$, 
\begin{equation}
g_{0}(x) = \sum_{ k = 0}^{2n-1} c_{k}x^{k}. 
\end{equation}
However, as $g_{0} \in H_{n}([0,\infty))$, it follows that $g_{0} \equiv 0$.
Indeed, as $g^{(j)}_{0}(0) = 0$ for $j=0, \dots, n-1$ we  have
\begin{equation}
c_{0} = c_{1} = \dots = c_{n-1} = 0
\end{equation}
Furthermore, the condition $g^{(n)}_{0} \in L^{2}((0,\infty))$ yields
\begin{equation}
c_{n} = c_{n+1} = \dots = c_{2n-1} = 0, 
\end{equation}
completing the proof.
\end{proof}
%%%%%%

For general statements concerning completeness, see also \cite[p.~31]{Bu98}.

Using Theorem \ref{t2.1}, we now prove the following theorem. The
results of this theorem will be used in our new proof of the Birman
inequalities, defined in \eqref{1.1}, on $H_{n}([0,\infty))$ in the next section.

%%%%%%
\begin{theorem} \lb{t3.2}
Let $f\in H_{n}([0,\infty)).$ Then the following items $(i)$--$(iii)$ hold: \\[1mm] 
$(i)$ $f^{(n-j)}/x^{j}\in L^{2}((0,\infty))$, $j=0,1,\dots, n$.  \\[2mm] 
$(ii)$ $\lim_{x\uparrow\infty}\dfrac{|f^{(j)}(x)|^{2}
}{x^{2n-2j-1}}=0$, $j=0,1,\dots,n-1$. \\[2mm] 
$(iii)$ $\lim_{x\downarrow 0}\dfrac{|f^{(j)}(x)|^{2} }{x^{2n-2j-1}}=0$, $j=0,1,\dots,n-1.$ \\[1mm] 
In addition,  
\begin{equation}
f\in \hatt H_{n}((0,\infty)) \, \text{  implies  } \, f^{\prime} \in \hatt H_{n-1}((0,\infty)).  \lb{3.15} 
\end{equation} 
\end{theorem}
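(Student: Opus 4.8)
The plan is to prove part~(i) by induction on $j$ using Theorem \ref{t2.1}, then to deduce the boundary limits (ii)--(iii) from (i), and finally to obtain the inclusion \eqref{3.15} by promoting a function in $\hatt H_{n}((0,\infty))$ to one in $H_{n}([0,\infty))$ and invoking (i) once more. For (i) I would induct on $j$. The case $j=0$ is the defining membership $f^{(n)}\in L^{2}((0,\infty))$. Assume the statement for $j-1$, i.e.\ $f^{(n-j+1)}/x^{\,j-1}\in L^{2}((0,\infty))$. Since $n-j\le n-1$, the boundary condition gives $f^{(n-j)}(0)=0$, so $f^{(n-j)}(x)=\int_{0}^{x}f^{(n-j+1)}(t)\,dt$ and hence, with $h:=f^{(n-j+1)}/x^{\,j-1}\in L^{2}$,
\[
\frac{f^{(n-j)}(x)}{x^{j}}=\frac{1}{x^{j}}\int_{0}^{x}t^{\,j-1}h(t)\,dt .
\]
This is exactly the operator $B$ of Theorem \ref{t2.1} on $(a,b)=(0,\infty)$ with $w\equiv1$, $\varphi(t)=t^{\,j-1}$ and $\psi(x)=x^{-j}$; the hypotheses there are readily verified, and a direct computation shows that $K(x)=(2j-1)^{-1}$ is constant in $x$, so $K<\infty$. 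Then \eqref{2.7} yields $\norm{f^{(n-j)}/x^{j}}_{L^{2}}\le \tfrac{2}{2j-1}\,\norm{f^{(n-j+1)}/x^{\,j-1}}_{L^{2}}<\infty$, closing the induction.

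For (ii) and (iii) write $m:=n-j\ge1$ and $\phi(x):=|f^{(j)}(x)|^{2}/x^{\,2m-1}$. I would first show the one-sided limit exists by checking that $\phi'$ is absolutely integrable near $\infty$ (resp.\ near $0$): one computes
\[
\phi'(x)=\frac{2\,\Re\!\big(\ol{f^{(j)}(x)}\,f^{(j+1)}(x)\big)}{x^{\,2m-1}}-(2m-1)\,\frac{|f^{(j)}(x)|^{2}}{x^{\,2m}},
\]
whose second term is integrable because $f^{(j)}/x^{m}\in L^{2}$ by (i), and whose first term is integrable by the Cauchy--Schwarz inequality, since $f^{(j)}/x^{m}$ and $f^{(j+1)}/x^{\,m-1}$ both belong to $L^{2}$ by (i). Thus $\lim\phi$ exists at the relevant endpoint; it must equal $0$, for a positive limit $L$ would force $|f^{(j)}(x)|^{2}/x^{\,2m}\sim L/x$ there, contradicting $f^{(j)}/x^{m}\in L^{2}$. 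This disposes of (ii) and (iii) at once.

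For the inclusion \eqref{3.15}, let $f\in\hatt H_{n}((0,\infty))$. The regularity and the top-order condition needed for $f'\in\hatt H_{n-1}((0,\infty))$ are immediate from the definition of $\hatt H_{n}$, so the only real content is $f'/x^{\,n-1}\in L^{2}$. I would deduce this by showing that $f$ in fact lies in $H_{n}([0,\infty))$. First, $f^{(n)}\in L^{2}((0,1))\subset L^{1}((0,1))$ makes $f^{(n-1)}$ extend continuously to $0$, and a downward induction then shows that every limit $f^{(k)}(0_{+})$, $k=0,\dots,n-1$, exists and is finite. To see they vanish, expand by Taylor's formula with integral remainder,
\[
f(x)=\sum_{k=0}^{n-1}\frac{f^{(k)}(0_{+})}{k!}\,x^{k}+\frac{1}{(n-1)!}\int_{0}^{x}(x-t)^{n-1}f^{(n)}(t)\,dt ,
\]
and divide by $x^{n}$. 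The remainder, being dominated by the Hardy average $x^{-1}\int_{0}^{x}|f^{(n)}(t)|\,dt$ of $f^{(n)}\in L^{2}$, lies in $L^{2}$ near $0$ by Theorem \ref{t2.1} (now with $\varphi\equiv1,\psi=x^{-1}$, where $K=1$); since $f/x^{n}\in L^{2}$ by hypothesis, the polynomial part $\sum_{k=0}^{n-1}\tfrac{f^{(k)}(0_{+})}{k!}x^{\,k-n}$ is square-integrable near $0$, which forces each coefficient to vanish because $\int_{0}x^{\,2(k-n)}\,dx=\infty$ for $k\le n-1$. Hence $f^{(k)}(0_{+})=0$ and $f\in H_{n}([0,\infty))$. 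Now $f'\in H_{n-1}([0,\infty))$ follows directly from the definitions (this is \eqref{1.8}), and applying part~(i) to $f'$ (with $n$ replaced by $n-1$ and $j=n-1$) gives $f'/x^{\,n-1}\in L^{2}$, i.e.\ $f'\in\hatt H_{n-1}((0,\infty))$. The main obstacle is precisely this last step: promoting $f\in\hatt H_{n}$ to $f\in H_{n}([0,\infty))$ by controlling the behavior at the origin is where the two defining conditions $f^{(n)}\in L^{2}$ and $f/x^{n}\in L^{2}$ genuinely interact, whereas (i)--(iii) reduce to a single application of Theorem \ref{t2.1} together with elementary limit arguments.
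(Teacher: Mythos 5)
Your parts (i)--(iii) follow the paper's own proof essentially verbatim: the induction via Theorem \ref{t2.1} with $\varphi(t)=t^{j-1}$, $\psi(x)=x^{-j}$, $w\equiv 1$ and $K=(2j-1)^{-1}$ is exactly the paper's argument for (i), and your treatment of (ii)--(iii) (local absolute continuity of $\phi$, integrability of $\phi'$ near the endpoints via part (i) plus Cauchy--Schwarz, existence of the one-sided limits, then the $\int dx/x=\infty$ contradiction) is the same computation the paper runs through its integration-by-parts identity \eqref{3.20}.

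Where you genuinely diverge is \eqref{3.15}. The paper disposes of it in one line (``proved as in part (i), choosing $j=n-1$''), which is quite terse as literally stated: the part (i) machinery needs the vanishing boundary values $f^{(j)}(0)=0$, which are not available a priori for $f\in \hatt H_{n}((0,\infty))$ --- the paper only establishes them afterwards, in Theorem \ref{t3.4}, whose proof in turn invokes \eqref{3.15}. Your route closes this loop in a self-contained way: you prove $\hatt H_{n}((0,\infty))\subseteq H_{n}([0,\infty))$ outright, using Taylor's formula with integral remainder, bounding the remainder divided by $x^{n}$ by the Hardy average of $\big|f^{(n)}\big|$ (Theorem \ref{t2.1} once more, $K=1$), so that $f/x^{n}\in L^{2}$ forces the polynomial part $\sum_{k=0}^{n-1}f^{(k)}(0_{+})x^{k-n}/k!$ to lie in $L^{2}$ near $0$ and hence to vanish; then \eqref{3.15} follows from \eqref{1.8} together with part (i) applied to $f'$. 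This costs more than the paper's short contradiction argument, but it buys a non-circular derivation and, as a by-product, the content of Theorem \ref{t3.4} itself. One small point to polish: when you kill the coefficients, the terms $c_{k}x^{k-n}$ could in principle cancel one another, so argue via the smallest index $k_{0}$ with $c_{k_{0}}\neq 0$ --- near $0$ the sum behaves like $c_{k_{0}}x^{k_{0}-n}$, which fails to be square integrable since $2(k_{0}-n)\le -2$.
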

%%%%%%
\begin{proof}
We may assume without loss of generality that $f\in H_{n}([0,\infty))$ is real-valued. To
prove item $(i)$, one notes that the case $j=0$ is valid by definition of $H_{n}([0,\infty)).$ For
$j=1,$ one uses Theorem \ref{t2.1}. Since $f^{(n-1)}(0)=0,$ one has
the identity
\begin{equation}
\frac{f^{(n-1)}(x)}{x}=\frac{1}{x}\int_{0}^{x}f^{(n)}
(t) \, dt.\lb{3.13}
\end{equation}
Next, one  applies Theorem \ref{t2.1} with $\varphi(x)=1,$ $\psi(x)=1/x\ $and
$w(x)=1.$ Since
\begin{equation}
\int_{0}^{x}1^{2} \, dt \int_{x}^{\infty}\frac{1}{t^{2}} \, dt=1,
\end{equation}
one infers from \eqref{3.13} that $f^{(n-1)}/x$ $\in L^{2}((0,\infty))$ and
this establishes $(i)$ for $j=1$. For $j=2,$ one obtains 
\begin{equation}
\frac{f^{(n-2)}(x)}{x^{2}}=\frac{1}{x^{2}}\int_{0}^{x}t\frac{f^{(n-1)}(t)}
{t} \, dt.
\end{equation}
Again, with $\varphi(x)=x,$ $\psi(x)=1/x^{2}\ $and $w(x)=1$ and noting that
\begin{equation}
\int_{0}^{x}t^{2} \, dt\int_{x}^{\infty}\frac{1}{t^{4}} \, dt=\frac{1}{9},
\end{equation}
one concludes from Theorem \ref{t2.1} that $f^{(n-2)}/x^{2}$ $\in
L^{2}((0,\infty)).$ By induction, for $j=0,1,\dots,n-1,$ one obtains
\begin{equation}
\frac{f^{(n-j-1)}(x)}{x^{j+1}}=\frac{1}{x^{j+1}}\int_{0}^{x}t^{j}
\frac{f^{(n-j)}(t)}{t^{j}} \, dt, 
\end{equation}
assuming $f^{(n-j)}/x^{j}\in L^{2}((0,\infty))$ (and $f^{(n-j-1)}
(0)=0).$ Since
\begin{equation}
\int_{0}^{x}t^{2j} \, dt\int_{x}^{\infty}\frac{dt}{t^{2j+2}}=\frac{1}{\left(
2j+1\right)  ^{2}},
\end{equation}
one obtains from Theorem \ref{t2.1} that $f^{(n-j-1)}/x^{j+1}\in
L^{2}((0,\infty))$, completing the proof of item $(i)$. In particular, 
$f^{(j)}/x^{n-j}$, $f^{(j+1)}/x^{n-j-1}$ $\in L^{2}((0,\infty))$ and 
H\"{o}lder's inequality implies 
\begin{equation}
\frac{f^{(j)}f^{(j+1)}}{x^{2n-2j-1}}\in L^{1}((0,\infty)).\lb{3.19}
\end{equation}
Using integration by parts one obtains that, for any $[a,b]\subset(0,\infty)$ and
$j=0,1,\dots,n-1,$
\begin{align}
\int_{a}^{b} \frac{[f^{(j)}(x)]^2}{x^{2(n-j)}} \, dx &  =-\frac
{1}{2n-2j-1}\int_{a}^{b} [f^{(j)}(x)]^{2}\bigg(  \frac
{1}{x^{2n-2j-1}}\bigg)^{\prime}dx \no \\
&  =-\frac{1}{2n-2j-1}\bigg(\frac{[f^{(j)}(x)]^{2}
}{x^{2n-2j-1}}\bigg\vert _{a}^{b}-2\int_{a}^{b}\frac{f^{(j)}(x)f^{(j+1)}
(x)}{x^{2n-2j-1}} \, dx\bigg).    \lb{3.20}
\end{align}
From part $(i)$ and \eqref{3.19}, both integral terms in the identity in
\eqref{3.20} have finite limits as $a\downarrow 0$ or
$b\uparrow\infty;$ hence both limits
\begin{equation}
\lim_{x\uparrow\infty}\frac{[f^{(j)}(x)]^{2}}{x^{2n-2j-1}
}\text{ and }\lim_{x\downarrow 0}\frac{[f^{(j)}(x)]^{2}
}{x^{2n-2j-1}}, \quad j=0,1,\dots,n-1, 
\end{equation}
exist and are finite. We now establish part $(ii)$. Suppose, to the contrary 
that for some $j\in\{0,1,\dots,n-1\},$
\begin{equation}
\lim_{x\uparrow\infty}\frac{[f^{(j)}(x)]^{2}}{x^{2n-2j-1}
}=c>0.
\end{equation}
Then there exists $X>0$ such that
\begin{equation}
\frac{[f^{(j)}(x)]^{2}}{x^{2n-2j-1}}\geq\frac{c}{2}, \quad x \geq X.
\end{equation}
Multiplying the inequality by $1/x$, integrating and applying item $(i)$ yields
\begin{equation}
\infty>\int_{X}^{\infty} \frac{[f^{(j)}(x)]^2}{x^{2(n-j)}} \, 
dx\geq\frac{c}{2}\int_{X}^{\infty}\frac{1}{x} \, dx=\infty,
\end{equation}
a contradiction. This forces $c=0$ and proves item $(ii)$. A similar argument 
proves part $(iii)$.

The claim \eqref{3.15} is proved as in part $(i)$, choosing $j = n- 1$. 
\end{proof}
%%%%%%

%%%%%%
\begin{remark} \lb{r3.3}
We emphasize that 
\begin{equation}
H_n([0,\infty)) \neq H^n_0((0,\infty)), \quad n \in \bbN,
\end{equation}
with $H^n_0((0,\infty))$ denoting the standard Sobolev space obtained upon completing  
$C_0^{\infty}((0,\infty))$ in the norm of $H^n((0,\infty))$. (See, however, 
Theorem \ref{t8.2} in the finite interval context.) 

Indeed, $f\in H_{n}([0,\infty))$ does not necessarily
imply that some, or all, of the functions $f,$ $f^{\prime},\dots,f^{(n-1)}$
belong to $L^{2}((0,\infty)).$ In fact, define
\begin{equation}
\widetilde{f}(x)=\left\{
\begin{array}
[c]{ll}
0, & x\text{ near }0,\\
x^{(2n-1)/2}/\ln (x), & x\text{ near }\infty,\text{ }
\end{array}
\right.
\end{equation}
such that
\begin{equation}
\widetilde{f}^{(j)}\in AC_{loc}([0,\infty)), \quad j=0,1,\dots,n.
\end{equation}
Calculations show that $\widetilde{f}\in H_{n}([0,\infty))$, but $\widetilde{f}^{(j)}\notin
L^{2}((0,\infty))$, $j=0,1,\dots,n-1.$ \\
${}$ \hfill $\diamond$
\end{remark}
%%%%%%

\medskip

For $n\in\bbN  ,$ let $\hatt H_{n}((0,\infty))$ be as in \eqref{1.7} and pick 
$f\in \hatt H_{n}((0,\infty)).$ Then
\begin{equation}
f^{(n-1)}(1)-f^{(n-1)}(x)=\int_{x}^{1}f^{(n)}(t) \, dt \underset{x\downarrow 0}{\longrightarrow} \int_{0}
^{1}f^{(n)}(t) \, dt;
\end{equation}
hence $f^{(n-1)}(0_{+}) = \lim_{x \downarrow 0} f^{(n-1)}(x)$ exists and is finite. By defining 
$f^{(n-1)}(0):=f^{(n-1)}(0_{+}),$ we see that $f^{(n-1)}\in 
AC_{loc }([0,\infty)).$ A similar argument shows that
\begin{equation}
f\in \hatt H_{n}((0,\infty)) \, \text{ implies } \, f^{(j)}\in 
AC_{loc}([0,\infty)), \; j=0,1,\dots,n-1. \lb{3.29}
\end{equation}
We now prove the following result. 

%%%%%%
\begin{theorem} \lb{t3.4} 
For each $n\in\bbN$,  
\begin{equation} 
H_{n}([0,\infty)) = \hatt H_{n}((0,\infty))
\end{equation} 
as sets.
\end{theorem}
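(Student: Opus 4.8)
The plan is to establish the two set inclusions separately. The inclusion $H_n([0,\infty)) \subseteq \hatt H_n((0,\infty))$ requires almost no new work: if $f \in H_n([0,\infty))$, then for $j = 0, \dots, n-1$ one has $f^{(j)} \in AC_{loc}([0,\infty)) \subseteq AC_{loc}((0,\infty))$, while $f^{(n)} \in L^2((0,\infty))$ holds by definition, so the only nontrivial requirement is $f/x^n \in L^2((0,\infty))$. I would obtain this directly from the $j = n$ case of Theorem \ref{t3.2}\,$(i)$, which reads $f^{(n-n)}/x^n = f/x^n \in L^2((0,\infty))$. Hence $f \in \hatt H_n((0,\infty))$.

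For the reverse inclusion I would fix $f \in \hatt H_n((0,\infty))$ and exploit the work already done in \eqref{3.29}: the derivatives $f^{(j)}$, $j = 0, \dots, n-1$, lie in $AC_{loc}([0,\infty))$ and possess finite one-sided limits $f^{(j)}(0_+)$. Since $f^{(n)} \in L^2((0,\infty))$ is given, the single remaining obstruction to membership in $H_n([0,\infty))$ is the boundary condition $f^{(j)}(0) = 0$ for $j = 0, \dots, n-1$. Thus the entire content of this inclusion reduces to showing that all the finite limits $f^{(j)}(0_+)$ vanish, and this is precisely where the hypothesis $f/x^n \in L^2((0,\infty))$ near the origin must enter decisively.

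To carry this out, I would expand $f$ about $0$ using Taylor's formula with integral remainder,
\begin{equation*}
f(x) = \sum_{j=0}^{n-1} \frac{f^{(j)}(0_+)}{j!}\, x^j + \frac{1}{(n-1)!}\int_0^x (x-t)^{n-1} f^{(n)}(t)\, dt,
\end{equation*}
which is justified because $f^{(n-1)} \in AC_{loc}([0,\infty))$ and $f^{(n)} \in L^2((0,\infty)) \subset L^1_{loc}([0,\infty))$. A Cauchy--Schwarz estimate bounds the remainder by $[(n-1)!]^{-1}(2n-1)^{-1/2}\, x^{n-1/2}\, \big\| f^{(n)} \big\|_{L^2((0,x))}$, which is $o\big(x^{n-1/2}\big)$ as $x \downarrow 0$ since $\big\| f^{(n)} \big\|_{L^2((0,x))} \to 0$. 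Hence $f$ coincides near $0$, up to this negligible error, with the polynomial $P(x) = \sum_{j=0}^{n-1}[f^{(j)}(0_+)/j!]\, x^j$ of degree at most $n-1$. The decisive step is then a dichotomy: if $P \not\equiv 0$, its lowest-order monomial $c_k x^k$ has $k \leq n-1$, whence $|f(x)/x^n|^2 = |c_k|^2 x^{2k-2n}(1 + o(1))$ with exponent $2k - 2n \leq -2$, forcing $\int_0^1 |f(x)/x^n|^2\, dx = \infty$ and contradicting $f/x^n \in L^2((0,\infty))$. Therefore $P \equiv 0$, so $f^{(j)}(0_+) = 0$ for every $j$, and setting $f^{(j)}(0) := f^{(j)}(0_+)$ places $f$ in $H_n([0,\infty))$.

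I expect the main obstacle to be exactly this last argument: converting the single scalar integrability statement $f/x^n \in L^2$ near $0$ into the vanishing of the whole jet $f(0_+), f'(0_+), \dots, f^{(n-1)}(0_+)$. The Taylor-remainder estimate is the tool that isolates the (possibly nonzero) polynomial part from a remainder that is $o\big(x^{n-1/2}\big)$, and the quantitative heart of the matter is the exponent comparison $2k - 2n \leq -2 < -1$, which is what renders the offending term non-integrable at the origin. The two ``easy'' inclusions, by contrast, are either immediate or a direct appeal to Theorem \ref{t3.2}.
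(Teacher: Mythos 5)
Your proof is correct, but for the hard inclusion $\hatt H_{n}((0,\infty)) \subseteq H_{n}([0,\infty))$ you take a genuinely different route from the paper. The paper first upgrades the single hypothesis $f/x^{n}\in L^{2}((0,\infty))$ to the nesting property $f^{(j)}\in \hatt H_{n-j}((0,\infty))$, $j=0,1,\dots,n-1$ (its \eqref{3.32}--\eqref{3.33}, which rest on the claim \eqref{3.15} of Theorem \ref{t3.2} and hence ultimately on the weighted-operator machinery of Theorem \ref{t2.1}); with $f^{(j)}/x^{n-j}\in L^{2}((0,\infty))$ in hand for each $j$, it then disposes of each limit $f^{(j)}(0_+)$ separately by the two-line continuity argument: if $|f^{(j)}(0_+)|=c>0$ then $\int_0^\delta |f^{(j)}(x)|^2 x^{-2(n-j)}\,dx=\infty$. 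You instead bypass the nesting entirely: a single Taylor expansion at the origin (legitimate by \eqref{3.29} and $f^{(n)}\in L^{1}_{loc}([0,\infty))$), a Cauchy--Schwarz bound making the remainder $o\big(x^{n-1/2}\big)$, and a dichotomy on the lowest-order nonvanishing coefficient $c_k x^k$, $k\leq n-1$, whose exponent satisfies $2k-2n\leq -2<-1$ and therefore contradicts the one given condition $f/x^{n}\in L^{2}((0,\infty))$. Both arguments share the preparatory step \eqref{3.29}. What each buys: your version is more self-contained and elementary, since it never invokes Theorem \ref{t2.1} or the $\hatt H$-nesting, and it kills the whole jet $f(0_+),\dots,f^{(n-1)}(0_+)$ in one stroke; the paper's version reuses machinery it has already built, and the nesting property it establishes along the way (the $\hatt H$-analogue of \eqref{1.8}) is of independent interest and is exactly what makes each individual contradiction trivial.
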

%%%%%%
\begin{proof}
Let $n \in \bbN$. If $f\in H_{n}([0,\infty)),$ one concludes by Theorem \ref{t3.2} that 
$f/x^{n}\in L^{2}((0,\infty))$ and hence $H_{n}([0,\infty))\subseteq \hatt H_{n}((0,\infty)).$ 
Next, we show that
\begin{equation}
\hatt H_{n}((0,\infty)) \subseteq H_{n}([0,\infty)).      \lb{3.31}
\end{equation}
One notes that
\begin{equation}
f\in \hatt H_{n}((0,\infty)) \, \text{ implies } \, f^{\prime}\in \hatt H_{n-1} ((0,\infty));    \lb{3.32} 
\end{equation}
indeed, this follows from Theorem \ref{t3.2}\,$(i)$ with $j=n-1.$ Repeated application of
\eqref{3.32} yields 
\begin{equation}
f\in \hatt H_{n}((0,\infty)) \, \text{ implies } \, f^{(j)}\in \hatt H_{n-j}((0,\infty)), 
\quad j=0,1,\dots, n-1.      \lb{3.33} 
\end{equation}
Next, we claim that 
\begin{equation}
f\in \hatt H_{n}((0,\infty)) \, \text{ implies } \,  f(0)=0 \, \text{ (cf.\ \eqref{3.29});}   \lb{3.34} 
\end{equation}
to prove \eqref{3.34}, suppose $\left\vert f(0)\right\vert =c>0.$ By
continuity, there exists $\delta>0$ such that 
\begin{equation}
\left\vert f(x)\right\vert \geq c/2 \,  \text{ for all } \, x\in [0,\delta].
\end{equation}
Then 
\begin{equation}
\infty>\int_{0}^{\infty} \dfrac{|f(x)|^2}{x^{2n}} dx 
\geq\int_{0}^{\delta} \dfrac{|f(x)|^2}{x^{2n}} 
dx\geq\dfrac{c^{2}}{4}\int_{0}^{\delta}\dfrac{dx}{x^{2n}}=\infty,
\end{equation}
a contradiction. Hence, $f(0)=0$ proving \eqref{3.34}. Applying
this argument to the implication in \eqref{3.33} yields 
\begin{equation}
f\in \hatt H_{n}((0,\infty)) \, \text{ implies } \, f^{(j)}(0)=0, \quad j=0,1,\dots, n-1, 
\end{equation}
proving \eqref{3.31}. 
\end{proof}
%%%%%%

Next, we offer one more characterization of $H_n([0,\infty))$. Define for each $n \in \bbN$, 
\begin{equation}
D_{n}([0,\infty)):=\left\{  \int_{0}^{x}\int_{0}^{t_{1}}\cdots\int_{0}^{t_{n-1} 
}f(t) \, dt \, dt_{n-1}\dots dt_{1}\, \bigg| \, f\in L^{2}((0,\infty))\right\}  .
\end{equation}
In particular, $D_{1}([0,\infty)) = \left\{  \int_{0}^{x}f(t)dt\, \Big| \,  f\in L^{2}((0,\infty))\right\}.$

%%%%%%
\begin{theorem} \lb{t3.5} 
For each $n\in\bbN  ,$ $H_{n}([0,\infty)) = D_{n}([0,\infty)).$
\end{theorem}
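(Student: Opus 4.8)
The plan is to prove the two set inclusions separately, both of which reduce to the fundamental theorem of calculus together with the vanishing conditions $f^{(j)}(0)=0$.

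First I would show $D_{n}([0,\infty)) \subseteq H_{n}([0,\infty))$. Given $g \in D_{n}([0,\infty))$, write $g(x) = \int_0^x \cdots \int_0^{t_{n-1}} f(t)\,dt\,dt_{n-1}\cdots dt_1$ for some $f \in L^{2}((0,\infty))$. Since $L^{2}((0,\infty)) \subset L^1_{loc}([0,\infty))$, the innermost antiderivative $x \mapsto \int_0^x f(t)\,dt$ lies in $AC_{loc}([0,\infty))$; each successive integration then produces a function that is $AC_{loc}$ (in fact $C^1$ on $[0,\infty)$), so all of $g, g', \dots, g^{(n-1)}$ belong to $AC_{loc}([0,\infty))$. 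Differentiating the nested integral $j$ times peels off $j$ integral signs, giving $g^{(j)}(x) = \int_0^x \cdots \int_0^{t_{n-j-1}} f(t)\,dt \cdots dt_1$ for $0 \le j \le n-1$, whence $g^{(j)}(0)=0$, while $g^{(n)} = f \in L^{2}((0,\infty))$ a.e. These are exactly the defining conditions of $H_{n}([0,\infty))$.

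For the reverse inclusion $H_{n}([0,\infty)) \subseteq D_{n}([0,\infty))$, I would take $f \in H_{n}([0,\infty))$ and set $g := f^{(n)} \in L^{2}((0,\infty))$. Because $f^{(n-1)} \in AC_{loc}([0,\infty))$ with $f^{(n-1)}(0)=0$, the fundamental theorem of calculus gives $f^{(n-1)}(x) = \int_0^x f^{(n)}(t)\,dt$. Integrating once more and using $f^{(n-2)}(0)=0$ yields $f^{(n-2)}(x) = \int_0^x \int_0^{t_1} f^{(n)}(t)\,dt\,dt_1$, and an induction on the number of integrations (each step legitimate since the relevant derivative is $AC_{loc}$ and vanishes at $0$) produces $f(x) = \int_0^x \cdots \int_0^{t_{n-1}} f^{(n)}(t)\,dt\,dt_{n-1}\cdots dt_1$. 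This exhibits $f$ as the $n$-fold iterated integral of the $L^{2}$ function $f^{(n)}$, i.e.\ $f \in D_{n}([0,\infty))$, completing the proof.

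I do not expect a genuine obstacle here: the content is entirely the equivalence between ``$f^{(n)} \in L^{2}$ with $f^{(j)} \in AC_{loc}$ and $f^{(j)}(0)=0$ for $j<n$'' and ``$f$ is an $n$-fold iterated integral from $0$ of an $L^{2}$ function.'' The only points requiring a line of care are that $L^{2}((0,\infty)) \subset L^1_{loc}([0,\infty))$, so each antiderivative in the tower is well defined and absolutely continuous, and the repeated application of the fundamental theorem of calculus at each level, which is precisely where the boundary conditions $f^{(j)}(0)=0$ are consumed. If a more compact formulation is preferred, one could instead invoke Cauchy's formula for repeated integration, $g(x) = \frac{1}{(n-1)!}\int_0^x (x-t)^{n-1} f(t)\,dt$, to package the whole tower as a single integral, but the inductive argument via the fundamental theorem of calculus is cleaner and self-contained.
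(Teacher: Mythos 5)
Your proof is correct and takes essentially the same route as the paper: the inclusion $D_{n}([0,\infty)) \subseteq H_{n}([0,\infty))$ is checked exactly as in the paper (which points back to the construction in the proof of Proposition \ref{p3.1}), and the reverse inclusion rests on the fundamental theorem of calculus for $AC_{loc}$ functions combined with the vanishing conditions $f^{(j)}(0)=0$. The only organizational difference is that the paper builds the candidate $h(x)=\int_0^x\cdots\int_0^{t_{n-1}} f^{(n)}(t)\,dt\,dt_{n-1}\cdots dt_1 \in D_{n}([0,\infty))$ and argues that $f-h$ is a polynomial of degree at most $n-1$ annihilated by the boundary conditions, whereas you unroll the iterated integral one level at a time, consuming one condition $f^{(j)}(0)=0$ per integration --- the same argument with the constants of integration handled in a different order.
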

%%%%%%
\begin{proof}
Following the discussion in the proof of Proposition \ref{p3.1}, one concludes that 
$D_{n}([0,\infty)) \subseteq H_{n}([0,\infty))$, and hence 
it suffices to show $H_{n}([0,\infty)) \subseteq D_{n}([0,\infty)).$ To this end it is instructive to
first consider the case $n=1.$ Let $f\in H_{1}([0,\infty))$ so $f^{\prime}\in
L^{2}((0,\infty)).$ By H\"{o}lder's inequality, $f^{\prime}\in L_{loc
}^{1}((0,\infty));$ indeed, for $0\leq x<y<\infty,$
\begin{align}
\begin{split} 
\int_{x}^{y}\left\vert f^{\prime}(t)\right\vert dt &  \leq\left(  \int_{x}
^{y}\left\vert f^{\prime}(t)\right\vert ^{2}dt\right)  ^{1/2} \left(
\int_{x}^{y}1^{2} \, dt\right)  ^{1/2}\\
&  \leq\left(  \int_{0}^{\infty}\left\vert f^{\prime}(t)\right\vert
^{2}dt\right)  ^{1/2} |y-x|^{1/2}<\infty.
\end{split} 
\end{align}
For $x\geq0,$ let $h(x):=\int_{0}^{x}f^{\prime}(t) \, dt.$ Then $h\in
D_{1}([0,\infty)) \cap H_{1}([0,\infty)).$ By standard integration arguments, 
$h=f+C$ on $[0,\infty)$
for some constant $C.$ Since $f(0)=h(0)=0,$ $C=0$ and thus $f=h \in D_{1}([0,\infty)).$ 

In general, let $f\in H_{n}([0,\infty)).$ Then 
$f^{(n)}\in L^{2}((0,\infty))$ and, as above, $f^{(n)}\in L_{loc}^{1}((0,\infty)).$
Define, for $x\geq0,$
\begin{equation} 
h(x):=\int_{0}^{x}\int_{0}^{t_{1}}\cdots\int_{0}^{t_{n-1}}f^{(n)}
(t) \, dt \, dt_{n-1}\dots dt_{1}, 
\end{equation}
so $h\in D_{n}([0,\infty))$ and $h^{(n)}(x)=f^{(n)}(x)$ for $a.e.$ $x\geq0.$ Mimicking the
argument for $n=1,$ it follows that $h=f+p$ on $[0,\infty)$ for some
polynomial $p$ of degree less than or equal to $n-1.$ However, $h^{(j)}(0)=f^{(j)}(0)$ for
$j=0,1,\dots, n-1;$ that is to say, $p^{(j)}(0)=0$ for $j=0,1,\dots, n-1.$
Hence $p\equiv0$ and thus $f=h\in D_{n}([0,\infty)).$
\end{proof}
%%%%%%

We conclude this section with the following result which is interesting in its
own right; the proof of part $(i)$ is contained in the proof of Theorem
\ref{t3.4} and the proof of part $(ii)$ follows from Theorem \ref{t3.2}.

%%%%%%
\begin{theorem} \lb{t3.10} 
Let $n\in\bbN$.  Suppose $f:[0,\infty)\rightarrow
\bbC $ satisfies $f^{(j)}\in AC_{loc}([0,\infty))$, $j=0,1,\dots,n-1.$ 
Then the following assertions $(i)$ and $(ii)$ hold:\\[1mm] 
$(i)$ If $f/x^{n},f^{(n)}\in L^{2}((0,\infty)),$ then $f^{(j)}(0)=0$, 
$j=0,1,\dots,n-1.$ \\[1mm] 
$(ii)$ If $f^{(n)}\in L^{2}((0,\infty))$ and $f^{(j)}(0)=0$, 
$j=0,1,\dots,n-1,$ then $f/x^{n}\in L^{2}((0,\infty)).$ In fact, $f^{(n-j)}
/x^{j}\in L^{2}((0,\infty))$, $j=0,1,\dots,n.$ 
\end{theorem}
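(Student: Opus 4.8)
The plan is to observe that the two assertions are nothing more than the defining conditions of the spaces $H_{n}([0,\infty))$ and $\hatt H_{n}((0,\infty))$ repackaged, so that each part reduces to invoking a result already established. The standing hypothesis $f^{(j)}\in AC_{loc}([0,\infty))$, $j=0,1,\dots,n-1$, guarantees in particular that the boundary values $f^{(j)}(0)$ exist and are finite; this is precisely what lets me read the additional integrability assumptions as membership in one of the two spaces.

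For part $(ii)$, I would note that the hypotheses $f^{(j)}\in AC_{loc}([0,\infty))$, $f^{(n)}\in L^{2}((0,\infty))$, and $f^{(j)}(0)=0$ for $j=0,1,\dots,n-1$ are \emph{exactly} the defining conditions of $H_{n}([0,\infty))$ in \eqref{3.0}. Hence $f\in H_{n}([0,\infty))$, and Theorem \ref{t3.2}\,$(i)$ applies verbatim to yield $f^{(n-j)}/x^{j}\in L^{2}((0,\infty))$ for $j=0,1,\dots,n$; the case $j=n$ is the assertion $f/x^{n}\in L^{2}((0,\infty))$. No further work is needed.

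For part $(i)$, I would first check that the hypotheses place $f$ in $\hatt H_{n}((0,\infty))$: the regularity $f^{(j)}\in AC_{loc}([0,\infty))$ is stronger than the $AC_{loc}((0,\infty))$ demanded in \eqref{3.0a}, and the remaining conditions $f^{(n)},f/x^{n}\in L^{2}((0,\infty))$ match that definition. Then I would reproduce the boundary-vanishing argument from the proof of Theorem \ref{t3.4}: by \eqref{3.33} one has $f^{(j)}\in \hatt H_{n-j}((0,\infty))$, hence $f^{(j)}/x^{n-j}\in L^{2}((0,\infty))$, for $j=0,1,\dots,n-1$; and since each $f^{(j)}$ is continuous at $0$ by the $AC_{loc}([0,\infty))$ hypothesis, the contradiction argument in \eqref{3.34} (if $f^{(j)}(0)\neq 0$ then, by continuity, $|f^{(j)}|$ is bounded below near $0$, forcing $f^{(j)}/x^{n-j}\notin L^{2}$ near $0$) yields $f^{(j)}(0)=0$ for each such $j$.

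The genuinely new content over Theorems \ref{t3.2} and \ref{t3.4} is essentially nil; the only point requiring care is bookkeeping on the regularity hypotheses, namely verifying that the pointwise boundary values $f^{(j)}(0)$ are available (so that the continuity step in the contradiction argument is legitimate) and that the $AC_{loc}([0,\infty))$ assumption is consistent with, and indeed stronger than, the $AC_{loc}((0,\infty))$ regularity built into $\hatt H_{n}((0,\infty))$. Thus I expect no real obstacle beyond aligning the hypotheses of this theorem with the definitions \eqref{3.0} and \eqref{3.0a}.
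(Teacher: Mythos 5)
Your proposal is correct and follows essentially the same route as the paper, which proves part $(ii)$ by identifying the hypotheses with the definition of $H_{n}([0,\infty))$ and invoking Theorem \ref{t3.2}\,$(i)$, and part $(i)$ by the membership $f\in \hatt H_{n}((0,\infty))$ together with the boundary-vanishing contradiction argument from the proof of Theorem \ref{t3.4}. Your additional bookkeeping remark—that the hypothesis $f^{(j)}\in AC_{loc}([0,\infty))$ already supplies the finite boundary values needed for the continuity step—is exactly the point that makes the reduction legitimate.
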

%%%%%%

%%%%%%%%%%%%%%%%%%%%%%%%%%%%%%%
%%%%%%%%%%%%%%%%%%%%%%%%%%%%%%%
\section{A New Proof of Birman's Sequence of Hardy--Rellich-type Inequalities} 
\lb{s4}
%%%%%%%%%%%%%%%%%%%%%%%%%%%%%%%
%%%%%%%%%%%%%%%%%%%%%%%%%%%%%%%

For the sake of completeness, we first give Glazman's proof (see 
\cite[pp.~83--84]{Gl66}) of the Birman inequalities in \eqref{1.1}; actually, we 
provide a slight generalization including a power weight. 
Birman does not give these explicit details in \cite{Bi66}, but it is clear
that he knew this proof. We note that another proof of the inequalities in
\eqref{1.1}, for $f$ $\in C_0^{n}((0,\infty)),$ follows from
repeated applications of Lemmas 5.3.1 and 5.3.3 in Davies' text 
\cite[pp.~104--105]{Da95}. Subsequently, we present a new proof of the 
Birman inequalities whose interest lies in the fact that it essentially consists of 
repeated use of Hardy's inequality.

We start with a slight extension of Glazman's result, \cite[pp.~83--84]{Gl66}: 

%%%%%%%
\begin{theorem} \lb{t4.1} 
Let $n\in\bbN$, $\alpha \in \bbR$, and $f\in C_0^{n}((0,\infty))$ be real-valued. Then 
\begin{equation}
\int_{0}^{\infty} x^{\alpha} \big[f^{(n)}(x)\big]^{2}dx\geq\frac{\big[\prod_{j=1}^n (2n + 1 - 2j - \alpha)\big]^{2}}{2^{2n}} 
\int_{0}^{\infty} \frac{[f(x)]^2}{x^{2n - \alpha}} \, dx. \lb{4.1}
\end{equation}
Moreover, if $f \not \equiv 0$, the inequalities \eqref{4.1} are strict.
\end{theorem}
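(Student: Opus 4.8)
The plan is to deduce \eqref{4.1} from $n$ successive applications of a single weighted one-step Hardy inequality, the product of whose constants telescopes into the Birman constant. I would first isolate the base step: for real-valued $g\in C_0^1((0,\infty))$ and $\beta\in\bbR$,
\begin{equation}
\int_0^\infty x^{\beta}\,[g'(x)]^2\,dx \;\geq\; \frac{(1-\beta)^2}{4}\int_0^\infty x^{\beta-2}\,[g(x)]^2\,dx.
\end{equation}
This is proved by completing the square: for any $\gamma\in\bbR$ one has $0\leq\int_0^\infty x^{\beta}\big[g'+\gamma x^{-1}g\big]^2\,dx$, and upon expanding the square the cross term $2\gamma\int_0^\infty x^{\beta-1}g g'\,dx=\gamma\int_0^\infty x^{\beta-1}(g^2)'\,dx$ is integrated by parts. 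Since $g$ has compact support in $(0,\infty)$, the boundary terms vanish, giving $\int_0^\infty x^{\beta}(g')^2\,dx\geq-[\gamma^2+\gamma(1-\beta)]\int_0^\infty x^{\beta-2}g^2\,dx$; minimizing the bracket over $\gamma$ (the minimizer is $\gamma=(\beta-1)/2$) yields the constant $(1-\beta)^2/4$.

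Next I would iterate. Writing $f^{(n)}=(f^{(n-1)})'$ and applying the base step with $g=f^{(n-1)}$ and $\beta=\alpha$, then with $g=f^{(n-2)}$ and $\beta=\alpha-2$, and in general at the $k$-th stage with $g=f^{(n-k)}$ and weight exponent $\beta=\alpha-2(k-1)$, the weight on the right-hand side drops by two at each step, so after $n$ steps the chain terminates at $\int_0^\infty x^{\alpha-2n}[f]^2\,dx=\int_0^\infty [f]^2/x^{2n-\alpha}\,dx$. The accumulated constant is $\prod_{k=1}^n (2k-1-\alpha)^2/4$, and the reindexing $k\mapsto n+1-k$ shows $\prod_{k=1}^n(2k-1-\alpha)=\prod_{j=1}^n(2n+1-2j-\alpha)$, which is exactly the constant in \eqref{4.1}. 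All integrals converge trivially because $f\in C_0^n((0,\infty))$ has compact support bounded away from both $0$ and $\infty$; this is precisely what makes the $C_0^n$ setting so convenient and frees the argument of any decay or regularity hypotheses.

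For strictness I would trace equality back through the chain. Equality in \eqref{4.1} forces equality at every stage, in particular in the outermost square-completion, i.e.\ $\int_0^\infty x^{\alpha}\big[(f^{(n-1)})'+\gamma x^{-1}f^{(n-1)}\big]^2\,dx=0$ with $\gamma=(\alpha-1)/2$. Hence $f^{(n-1)}$ satisfies $(f^{(n-1)})'=-\gamma x^{-1}f^{(n-1)}$ on its support, so $f^{(n-1)}(x)=c\,x^{-\gamma}$; continuity together with compact support forces $c=0$, whence $f^{(n-1)}\equiv 0$ and $f$ is a polynomial of degree $\leq n-2$ with compact support, i.e.\ $f\equiv 0$. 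In the degenerate case where some factor $2k-1-\alpha$ vanishes, the right-hand side of \eqref{4.1} is zero, and strictness is then immediate because $\int_0^\infty x^{\alpha}[f^{(n)}]^2\,dx>0$ unless $f^{(n)}\equiv 0$, which again forces $f\equiv 0$.

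The main obstacle is purely bookkeeping: correctly shifting the weight exponent by $2$ at each iteration and verifying that the running product of one-step constants reassembles, via the index reflection $j\mapsto n+1-j$, into the stated constant $\big[\prod_{j=1}^n(2n+1-2j-\alpha)\big]^2/2^{2n}$. Everything else—the single integration by parts and the scalar optimization over $\gamma$—is elementary and, thanks to compact support, entirely free of boundary or convergence complications.
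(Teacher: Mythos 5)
Your proof is correct, but it follows a genuinely different route from the paper's own proof of Theorem \ref{t4.1}, which is Glazman's argument: starting from the identity $f(x)^2 = 2\int_0^x f(t) f'(t)\,dt$, interchanging the order of integration (cf.\ \eqref{4.4}), and applying Cauchy--Schwarz to obtain the one-step estimate \eqref{4.4a}, which is then iterated. You instead prove the one-step weighted Hardy inequality by completing the square, $0 \leq \int_0^\infty x^{\beta}\big[g'(x) + \gamma x^{-1} g(x)\big]^2 dx$, integrating the cross term by parts (boundary terms vanish by compact support), and optimizing over $\gamma$; iteration and the reindexing $j = n+1-k$ then reassemble the constant exactly as you say. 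This square-completion is precisely the one-parameter factorization idea of Schmincke \cite{Sc72}, which the paper itself employs --- in the equivalent guise of the elementary inequality \eqref{4.7} plus optimization in $\varepsilon$ --- in its second, ``new'' proof (Theorem \ref{t4.4}) of the unweighted case on $H_n([0,\infty))$; so your argument is in effect the weighted, compactly supported analogue of that proof, merging the Cauchy--Schwarz step and the $\varepsilon$-step into a single algebraic identity. What each approach buys: Glazman's route avoids integration by parts but needs the Fubini interchange; yours avoids Fubini and Cauchy--Schwarz at the cost of one integration by parts, which is harmless for $f \in C_0^n((0,\infty))$, and it yields a cleaner equality analysis --- vanishing of a perfect square gives the exact first-order ODE $(f^{(n-1)})' = -\gamma x^{-1} f^{(n-1)}$, whose power solutions are incompatible with compact support, paralleling the paper's proportionality analysis \eqref{4.5c} for equality in Cauchy--Schwarz. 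Your separate treatment of the degenerate case (some factor $2k-1-\alpha = 0$, so the right-hand side of \eqref{4.1} vanishes) is also correct, and is a case the paper's strictness discussion treats only implicitly.
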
 
%%%%%%%
\begin{proof}
Since
\begin{equation}
f(x)^2=2\int_{0}^{x}f(t)f^{\prime}(t) \, dt,     \lb{4.2}
\end{equation}
one infers that 
\begin{align}
\int_{0}^{\infty} \dfrac{f(x)^2}{x^{2n - \alpha}} \, dx  &  =2\int
_{0}^{\infty}x^{\alpha - 2n}\left(  \int_{0}^{x}f(t)f^{\prime}(t) \, dt\right)  dx  \no \\
&  =2\int_{0}^{\infty}f(t)f^{\prime}(t) \bigg(\int_{t}^{\infty}x^{\alpha - 2n} \, dx\bigg) dt   \no \\
&  =\dfrac{2}{2n-1-\alpha}\int_{0}^{\infty}t^{\alpha + 1-2n}f(t)f^{\prime}(t) \, dt    \no \\
&  \leq\dfrac{2}{2n-1-\alpha}\left(  \int_{0}^{\infty} \dfrac{[f(x)]^2}{x^{2n-\alpha}} \, dx\right)^{1/2}
\left(  \int_{0}^{\infty} \dfrac
{[f^{\prime}(x)]^2}{x^{2(n-1) - \alpha}} \, dx\right)^{1/2}.    \lb{4.3} 
\end{align}
Here we used the elementary fact
\begin{equation}
\int_0^y f_1(x) \bigg(\int_0^x f_2(t) \, dt\bigg) dx 
= \int_0^y f_2(t) \bigg(\int_t^y f_1(x) \, dx \bigg) dt, \quad y > 0   \lb{4.4} 
\end{equation} 
(verified, e.g., by differentiating with respect to $y$, assuming appropriate integrability conditions 
on $f_1, f_2$), in the second line of \eqref{4.3}, and employed Cauchy--Schwarz in the final step of 
\eqref{4.3}. This implies 
\begin{equation}
\int_{0}^{\infty} \dfrac{[f(x)]^2}{x^{2n-\alpha}} \, dx\leq\left(
\dfrac{2}{2n-1 - \alpha}\right)  ^{2}\int_{0}^{\infty} \dfrac{[f^{\prime}(x)]^2}{x^{2(n-1)-\alpha}} \, dx. 
\lb{4.4a} 
\end{equation}
By iteration, for $\alpha \neq 2n+1 - 2j$, $j=0,1,\dots, n,$ one obtains 
\begin{align}
\begin{split}
& \dfrac{((2n-1-\alpha)(2n-3-\alpha)\cdots(2n+1-2j-\alpha))^{2}}{2^{2j}} 
\int_{0}^{\infty} \dfrac{[f(x)]^2}{x^{2n-\alpha}} \, dx   \\
& \quad \leq
\int_{0}^{\infty} \dfrac{[f^{(j)}(x)]^2}{x^{2(n-j)-\alpha}} \, dx. \lb{4.5}
\end{split} 
\end{align}
Letting $j=n$ in \eqref{4.5} implies \eqref{4.1} on $C_0^{n}((0,\infty))$. 
Inequality \eqref{4.4a} becomes trivial if $\alpha = 2n + 1 - 2j$, $1 \leq j \leq n$.

To prove that all inequalities are strict unless $f \equiv 0$, one just has to check the case of 
equality in all the Cauchy inequalities involved. The latter are of the type
\begin{align}
\begin{split} 
& \int_0^{\infty} \frac{f^{(j-1)}(x)}{x^{n - (j-1) - (\alpha/2)}}  \frac{f^{(j)}(x)}{x^{n - j - (\alpha/2)}} \, dx \\
& \quad 
\leq \bigg(\int_0^{\infty} \frac{\big[f^{(j-1)}(x)\big]^2dx}{x^{2n - 2(j-1) - \alpha}}\bigg)^{1/2} 
\bigg(\int_0^{\infty} \frac{\big[f^{(j)}(x)\big]^2dx}{x^{2n - 2j - \alpha}}\bigg)^{1/2}, 
\quad 1 \leq j \leq n.     \lb{4.5a} 
\end{split} 
\end{align}
Thus, equality in \eqref{4.5a} holds if and only if there exists some $\beta^2 \in [0,\infty)$ such that 
a.e.~on $(0,\infty)$, 
\begin{equation}
\pm \beta f^{(j-1)}(x) =  x f^{(j)}(x),  \quad 1 \leq j \leq n, 
\end{equation}
with general solution of the form
\begin{equation}
f_{j}(x) = c_{j-1}x^{\pm \beta + j-1} + c_{j-2}x^{j-2} +  c_{j-3}x^{j-3} + \dots +  c_{1}x + c_{0}, \quad 1 \leq j \leq n.  \lb{4.5c}
\end{equation}
The right-hand side in \eqref{4.5c} is not compactly supported, completing the proof. 
\end{proof}
%%%%%%

%%%%%%
\begin{remark} \lb{r4.2} 
If $f=f_{1}+if_{2}\in H_{n}([0,\infty)),$ where $f_{1}$ and $f_{2}$
are, respectively, the real and imaginary parts of $f,$ it is clear by
definition of $H_{n}([0,\infty))$ that $f_{1},f_{2}\in H_{n}([0,\infty)).$ Moreover, if $f_{1}$ and
$f_{2}$ each satisfy \eqref{1.1}, then $f$ also satisfies
\eqref{1.1}. Indeed,
\begin{align}
\int_{0}^{\infty} \big|f^{(n)}(x)\big|^{2}dx  &  =\int_{0}
^{\infty}\big| f_{1}^{(n)}(x)+if_{2}^{(n)}(x)\big|^{2} \, dx   \no \\
&  =\int_{0}^{\infty}\big(  f_{1}^{(n)}(x)+if_{2}^{(n)}(x)\big)  \big(
f_{1}^{(n)}(x)-if_{2}^{(n)}(x)\big) \, dx   \no \\
&  =\int_{0}^{\infty} \big[f_{1}^{(n)}(x)\big]^{2}dx 
+ \int_{0}^{\infty} \big[f_{2}^{(n)}(x)\big]^{2}dx     \no \\
&  \geq\dfrac{((2n-1)!)^{2}}{2^{2n}}\int_{0}^{\infty}\bigg[ 
\dfrac{f_{1}(x)^2}{x^{2n}} + \dfrac{f_{2}(x)^2}{x^{2n}}\bigg] \, dx   \no \\
&  =\dfrac{((2n-1)!)^{2}}{2^{2n}}\int_{0}^{\infty} \dfrac{|f(x)|^2}{x^{2n}} \, dx.
\end{align}
Consequently, to prove that an arbitrary $f\in H_{n}([0,\infty))$ satisfies the inequality
in \eqref{1.1}, it suffices to assume that $f$ is real-valued. The same argument applies 
of course to inequality \eqref{4.1}.
\hfill $\diamond$
\end{remark}
%%%%%%

A closer inspection of Glazman's proof readily reveals that it can be extended to the space
$H_{n}([0,\infty))$:

%%%%%%
\begin{corollary} \lb{c4.3}
Let $n\in\bbN$ and $f\in H_{n}([0,\infty)).$  Then, 
\begin{equation}
\int_{0}^{\infty}\big| f^{(n)}(x)\big|^{2}dx\geq\frac{[
(2n-1)!!]^{2}}{2^{2n}}\int_{0}^{\infty} \frac{|f(x)|^2}{x^{2n}} \, dx. \lb{4.6a}
\end{equation}
Moreover, if $f \not \equiv 0$, the inequalities \eqref{4.6a} are strict.
\end{corollary}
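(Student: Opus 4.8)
The plan is to run Glazman's computation from the proof of Theorem~\ref{t4.1} verbatim in the case $\alpha = 0$, checking that the only place where the hypothesis $f \in C_0^n((0,\infty))$ was used---namely to guarantee that every integral in the chain \eqref{4.2}--\eqref{4.5} is finite and that the Fubini-type interchange \eqref{4.4} survives the limit $y \uparrow \infty$---is now supplied by Theorem~\ref{t3.2}. First I would invoke Remark~\ref{r4.2} to reduce to the case of real-valued $f$.

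For such $f \in H_n([0,\infty))$, the three structural ingredients of Glazman's argument are in place: $f^{(j)}(0) = 0$ and $f^{(j)} \in AC_{loc}([0,\infty))$ for $0 \le j \le n-1$ hold by definition, so the pointwise identity $[f^{(j)}(x)]^2 = 2\int_0^x f^{(j)}(t) f^{(j+1)}(t)\,dt$ is valid; Theorem~\ref{t3.2}\,(i) gives $f^{(n-j)}/x^{j} \in L^2((0,\infty))$ for $j = 0,1,\dots,n$, so each integral $\int_0^\infty [f^{(j)}(x)]^2/x^{2(n-j)}\,dx$ in the iteration is finite; and H\"{o}lder's inequality (as in \eqref{3.19}) gives $f^{(j)} f^{(j+1)}/x^{2(n-j)-1} \in L^1((0,\infty))$, which both justifies the interchange \eqref{4.4} as $y \uparrow \infty$ and makes the subsequent Cauchy--Schwarz step legitimate. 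With these facts, the derivation leading to \eqref{4.4a} goes through at each level and produces the one-step recursion
\[
\int_0^\infty \frac{[f^{(j)}(x)]^2}{x^{2(n-j)}}\,dx
\le \left(\frac{2}{2n - 2j - 1}\right)^{2} \int_0^\infty \frac{[f^{(j+1)}(x)]^2}{x^{2(n-j-1)}}\,dx,
\quad j = 0, 1, \dots, n-1.
\]
Iterating from $j = 0$ to $j = n-1$ exactly as in \eqref{4.5} and multiplying the constants (whose denominators $2n-1, 2n-3, \dots, 1$ telescope to the $\alpha = 0$ value $[(2n-1)!!]^2/2^{2n}$) then yields \eqref{4.6a}.

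For the strictness claim I would follow the equality analysis at the end of the proof of Theorem~\ref{t4.1}, modifying only the final exclusion step. Since \eqref{4.6a} is obtained by chaining inequalities with strictly positive constants, equality in \eqref{4.6a} forces equality in every Cauchy--Schwarz step, in particular in the bottom one relating $f$ and $f'$; by the equality condition of Cauchy--Schwarz this gives $f'(x) = \lambda x^{-1} f(x)$ a.e.\ for some constant $\lambda \ge 0$, hence $f(x) = c\,x^{\lambda}$ a.e.\ (the polynomial corrections in \eqref{4.5c} being absent at this level). In the present setting I cannot appeal to the failure of compact support; instead I would observe that a nonzero pure power $c\,x^{\lambda}$ cannot belong to $H_n([0,\infty))$, since $f(0) = 0$ forces $\lambda > 0$, whereas $f/x^{n} \in L^2((0,\infty))$ would require $2\lambda - 2n > -1$ near $0$ and $2\lambda - 2n < -1$ near $\infty$ simultaneously, which is impossible. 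Hence $c = 0$, so the inequality is strict whenever $f \not\equiv 0$.

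The main obstacle is purely a matter of bookkeeping rather than a new idea: one must confirm that the integrability and endpoint-decay statements of Theorem~\ref{t3.2} are exactly what is needed to license each Fubini interchange and each Cauchy--Schwarz inequality across all $n$ iteration steps (and in particular that the top integral $\int_0^\infty [f^{(n)}(x)]^2\,dx$ is finite, without which the recursion would be vacuous), together with the verification that it is the $H_n([0,\infty))$ membership conditions, and not compact support, that rule out the equality case.
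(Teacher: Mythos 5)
Your proposal is correct and follows essentially the same route as the paper's proof: reduce to real-valued $f$ via Remark \ref{r4.2}, rerun Glazman's computation (Theorem \ref{t4.1} with $\alpha=0$) with every integrability and Fubini/Cauchy--Schwarz step legitimized by Theorem \ref{t3.2}\,$(i)$, and obtain strictness by showing that the equality case forces $f$ to be a pure power $c\,x^{\lambda}$, which cannot lie in $H_{n}([0,\infty))$. Your explicit verification that $f/x^{n}\in L^{2}((0,\infty))$ is incompatible with a nonzero power (integrability failing at $0$ or at $\infty$) simply fills in the detail the paper compresses into the remark that the powers in \eqref{4.5c} do not belong to $H_{n}([0,\infty))$.
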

%%%%%%
\begin{proof}
By Remark \ref{r4.2} it suffices to consider real-valued $f$. Comparing \eqref{4.3}, \eqref{4.5}, and 
\eqref{4.5a} with Theorem \ref{t3.2}\,$(i)$ legitimizes all steps in \eqref{4.2}--\eqref{4.5} (taking 
$\alpha = 0$) for $f\in H_{n}([0,\infty)).$ Finally, also strict inequality holds for 
$H_{n}([0,\infty)) \ni f \not \equiv 0$ as the powers in \eqref{4.5c} do not lie in $H_{n}([0,\infty))$. 
\end{proof}
%%%%%%

In our new proof of Birman's inequalities \eqref{1.1} below, we
make repeated use of the elementary inequality
\begin{equation}
2xy\leq\varepsilon x^{2}+\frac{1}{\varepsilon}y^{2}, \quad 
x,y\in\bbR , \; \varepsilon>0, \lb{4.7}
\end{equation}
following instantly from $\big(\varepsilon^{1/2} x- \varepsilon^{-1/2} y\big)^{2}\geq0.$ In \cite{Sc72}, Schmincke established 
various one-parameter integral inequalities using this fact; see also
\cite{GL18} where\ new two-parameter inequalities are given. 

We note that the proof of Theorem \ref{t4.4} below is not shorter than other existing proofs, but we find it interesting as it reduces the sequence of Birman inequalities to repeated use of just the first such inequality, namely, Hardy's inequality (i.e., the case $n=1$ in \eqref{4.9}):

%%%%%
\begin{theorem} \lb{t4.4} 
Let $n\in\bbN$ and $f\in H_{n}([0,\infty))$. Then, 
\begin{equation}
\int_{0}^{\infty}\big| f^{(n)}(x)\big|^{2}dx\geq\frac{[
(2n-1)!!]^{2}}{2^{2n}}\int_{0}^{\infty} \frac{|f(x)|^2}{x^{2n}} \, dx. \lb{4.9}
\end{equation}
Moreover, if $f \not \equiv 0$, the inequalities \eqref{4.9} are strict.
\end{theorem}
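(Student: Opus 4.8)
The plan is to prove the Birman inequality \eqref{4.9} by induction on $n$, using the key inclusion $f \in H_n([0,\infty)) \Rightarrow f' \in H_{n-1}([0,\infty))$ (the $j=n-1$ case of Theorem \ref{t3.2}\,$(i)$, recorded in \eqref{3.15}) together with the elementary inequality \eqref{4.7} to reduce each step to an application of Hardy's inequality. By Remark \ref{r4.2} we may assume $f$ is real-valued throughout.

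\begin{proof}[Proof sketch]
First I would establish the base case $n=1$, which is just Hardy's inequality \eqref{1.2} on $H_1([0,\infty))$; this is already contained in Corollary \ref{c4.3} with $n=1$ (or proved directly via the computation \eqref{4.2}--\eqref{4.3} with $\alpha = 0$). For the inductive step, suppose \eqref{4.9} holds at level $n-1$, and take $f \in H_n([0,\infty))$. By \eqref{3.15} we have $f' \in H_{n-1}([0,\infty))$, so the induction hypothesis applies to $f'$:
\begin{equation}
\int_0^\infty \big|f^{(n)}(x)\big|^2 \, dx \geq \frac{[(2n-3)!!]^2}{2^{2(n-1)}} \int_0^\infty \frac{|f'(x)|^2}{x^{2(n-1)}} \, dx.
\end{equation}

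The crux is then to bound $\int_0^\infty |f'(x)|^2 / x^{2(n-1)} \, dx$ from below in terms of $\int_0^\infty |f(x)|^2 / x^{2n} \, dx$, thereby gaining one more factor and promoting the constant from $[(2n-3)!!]^2/2^{2(n-1)}$ to $[(2n-1)!!]^2/2^{2n}$. To do this I would start from the integration-by-parts identity that produces the weighted Hardy relation: using $f(0)=0$ and the finiteness of the boundary limits guaranteed by Theorem \ref{t3.2}\,$(ii)$,$(iii)$ (with $j=0$), one writes
\begin{equation}
\int_0^\infty \frac{[f(x)]^2}{x^{2n}} \, dx = \frac{2}{2n-1} \int_0^\infty \frac{f(x) f'(x)}{x^{2n-1}} \, dx,
\end{equation}
the boundary terms vanishing by the limit statements in Theorem \ref{t3.2}. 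Now I would apply \eqref{4.7} to the integrand with $x \to f(x)/x^n$ and $y \to f'(x)/x^{n-1}$ and an appropriately chosen parameter $\varepsilon$, splitting the cross term into a multiple of $\int_0^\infty [f(x)]^2/x^{2n} \, dx$ and a multiple of $\int_0^\infty [f'(x)]^2/x^{2(n-1)} \, dx$. All the relevant integrals are finite by Theorem \ref{t3.2}\,$(i)$. Absorbing the first term to the left-hand side and optimizing over $\varepsilon > 0$ yields exactly the sharp weighted Hardy-type bound
\begin{equation}
\int_0^\infty \frac{|f'(x)|^2}{x^{2(n-1)}} \, dx \geq \frac{(2n-1)^2}{4} \int_0^\infty \frac{|f(x)|^2}{x^{2n}} \, dx,
\end{equation}
which is the $n$-th weighted Hardy inequality in disguise. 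Chaining this with the induction hypothesis and using $(2n-1)!! = (2n-1) \cdot (2n-3)!!$ produces the constant $[(2n-1)!!]^2/2^{2n}$, completing the induction.

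The main obstacle, and the only genuinely delicate point, is the strictness claim: showing that equality forces $f \equiv 0$. Equality in \eqref{4.9} would require equality at every level of the induction, hence equality in each application of \eqref{4.7}, i.e.\ $\varepsilon^{1/2}\big(f^{(j)}/x^{n-j}\big) = \varepsilon^{-1/2}\big(f^{(j+1)}/x^{n-j-1}\big)$ a.e., which forces $f$ (and its derivatives) to be proportional to the critical power $x^{(2n-1)/2}$ of the boundary exponent. I would argue, exactly as in the strictness discussion following \eqref{4.5c} in Theorem \ref{t4.1}, that such a power function does not lie in $H_n([0,\infty))$ (it fails either the $f^{(n)} \in L^2((0,\infty))$ condition at infinity or the $f/x^n \in L^2((0,\infty))$ condition at the origin), so no nonzero $f \in H_n([0,\infty))$ can achieve equality. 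This is where care is needed to rule out the borderline case cleanly, but the integrability bookkeeping is identical to that already carried out in the proof of Theorem \ref{t4.1}.
\end{proof}
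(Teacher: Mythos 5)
Your proposal is correct and follows essentially the same route as the paper's proof: induction via the inclusion $f\in H_{n}([0,\infty))\Rightarrow f'\in H_{n-1}([0,\infty))$, the integration-by-parts identity justified by Theorem \ref{t3.2}\,$(ii)$,$(iii)$, the elementary inequality \eqref{4.7} with optimization over $\varepsilon$, and strictness via the observation that the critical power $x^{(2n-1)/2}$ fails to lie in $H_{n}([0,\infty))$. The only cosmetic difference is that you optimize the weighted Hardy step in $\varepsilon$ before chaining with the induction hypothesis, whereas the paper combines first and maximizes at the end; the two orderings are equivalent.
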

%%%%%% 
\begin{proof}
Let $\varepsilon>0,$ and $f\in H_{n}([0,\infty)),$ $n \in \bbN$. We first prove  
\begin{equation}
\int_{0}^{\infty}\big|f^{(n)}(x)\big|^{2}dx\geq\left\{
\begin{array}
[c]{ll}
(-\varepsilon^{2}+\varepsilon)\int_{0}^{\infty} \frac{|f(x)|^2}{x^2} \, dx, & n=1, \\
\frac{(2n-3)!!}{2^{2n-2}}(-\varepsilon^{2}+(2n-1)\varepsilon)\int_{0}^{\infty} 
\frac{|f(x)|^2}{x^{2n}} \, dx, & n \geq 2.
\end{array}
\right.  \lb{4.8}
\end{equation}
Maximizing over $\varepsilon\in(0,\infty)$ then yields \eqref{4.9}. 

We prove \eqref{4.8} by induction on
$n\in\bbN  .$ For $n=1,$ let $f\in H_{1}$ be real-valued on $[0,\infty);$
see Remark \ref{r4.2}. Then
\begin{align}
\int_{0}^{\infty} \frac{f(x)^2}{x^2} \, dx  &  
=-\int_{0}^{\infty}f(x)^2\left(  \frac{1}{x}\right)  ^{\prime}dx=-\left.  \frac{f(x)^2}
{x}\right\vert _{0}^{\infty}+2\int_{0}^{\infty}\frac{f(x)f^{\prime}(x)}{x} \, dx   \no \\
&  =2\int_{0}^{\infty}\frac{f(x)f^{\prime}(x)}{x}dx\text{ by Theorem
\ref{t3.2}}\,(ii)\text{ and }(iii)    \no \\
&  \leq 2\left(  \int_{0}^{\infty} \frac{f(x)^2}{x^2} \, dx\right)
^{1/2}\left(  \int_{0}^{\infty} [f^{\prime}(x)]^2 \, dx\right)
^{1/2}   \no \\
&  \leq\varepsilon\int_{0}^{\infty} \frac{f(x)^2}{x^2} \, 
dx+\frac{1}{\varepsilon}\int_{0}^{\infty} [f^{\prime}(x)]^{2} \, dx\text{ using }(\text{\ref{4.7}}).
\end{align}
This last inequality can be rewritten as
\begin{equation}
\int_{0}^{\infty} [f^{\prime}(x)]^{2} \, dx\geq(-\varepsilon
^{2}+\varepsilon)\int_{0}^{\infty} \frac{f(x)^2}{x^2} \, dx.
\end{equation}
Since the maximum of $\varepsilon\rightarrow-\varepsilon^{2}+\varepsilon$
occurs at $\varepsilon=1/2$ with maximum value $1/4,$ one concludes that
\begin{equation}
\int_{0}^{\infty} [f^{\prime}(x)]^{2} \, dx\geq\frac{1}{4}\int
_{0}^{\infty} \frac{f(x)^2}{x^2} \, dx\geq(-\varepsilon
^{2}+\varepsilon)\int_{0}^{\infty} \frac{f(x)^2}{x^2} \, dx.
\lb{4.12}
\end{equation}
The inequalities in \eqref{4.12} establish both \eqref{4.9} 
and \eqref{4.8} when $n=1.$
Incidentally, this argument also provides a proof of the classical Hardy
inequality \eqref{1.2}. We now assume that \eqref{4.9} holds for $n = 1,.... k-1$ 
for some $k \in \bbN$.  
Let $f\in H_{k}([0,\infty));$ by
\eqref{1.8}, $f^{\prime}\in H_{k-1}([0,\infty))$ and so, from our induction
hypothesis,
\begin{equation}
\int_{0}^{\infty}\big[f^{(k)}(x)\big]^{2}dx 
= \int_{0}^{\infty}\big[[f'(x)]^{(k-1)}\big]^{2}dx 
\geq\frac{[(2k-3)!!]^{2}}{2^{2k-2}}\int_{0}^{\infty} 
\frac{[f^{\prime}(x)]^2}{x^{2(k-1)}} \, dx. \lb{4.13}
\end{equation}
On the other hand, assuming $f$ is real-valued, we note, from the definition
of $H_{k}([0,\infty))$ and Theorem \ref{t3.2}\,$(i)$ that both $f/x^{k}$ and
$f^{\prime}/x^{k-1}$ belong to $L^{2}((0,\infty)).$ Moreover,
\begin{align}
\int_{0}^{\infty} \frac{f(x)^2}{x^{2k}} \, dx  &  =-\frac
{1}{2k-1}\int_{0}^{\infty}f(x)^2\left(  x^{-2k+1}\right)  ^{\prime
}dx \no \\
&  =-\frac{1}{2k-1}\left(  \left.  \frac{f(x)^2}{x^{2k-1}}\right\vert
_{0}^{\infty}-2\int_{0}^{\infty}\frac{f(x)f^{\prime}(x)}{x^{2k-1}} \, dx\right)
\text{ } \no \\
&  =\frac{2}{2k-1}\int_{0}^{\infty}\frac{f(x)f^{\prime}(x)}{x^{2k-1}}dx\text{
by Theorem \ref{t3.2}}\,(ii)\text{ and } (iii) \lb{4.14} \\
&  \leq\frac{2}{2k-1}\left(  \int_{0}^{\infty} \frac{f(x)^2}{x^{2k}} \, dx\right)^{1/2}
\left(  \int_{0}^{\infty} \frac
{[f^{\prime}(x)]^2}{x^{2(k-1)}} \, dx\right)  ^{1/2}  \no \\
&  \leq\frac{1}{2k-1}\left(  \varepsilon\int_{0}^{\infty} \frac
{f(x)^2}{x^{2k}} \, dx+\frac{1}{\varepsilon}\int_{0}^{\infty} 
\frac{[f^{\prime}(x)]^2}{x^{2(k-1)}} \, dx\right)  \text{ by }
(\text{\ref{4.7}}). \no 
\end{align}
Rearranging terms in this last inequality yields 
\begin{equation}
\int_{0}^{\infty} \frac{[f^{\prime}(x)]^2}{x^{2(k-1)}} \, 
dx\geq\left(  -\varepsilon^{2}+(2k-1)\varepsilon\right)  \int_{0}^{\infty} 
\frac{f(x)^2}{x^{2k}} \, dx. \lb{4.15}
\end{equation}
Combining \eqref{4.13} and \eqref{4.15}, one obtains
\begin{equation}
\int_{0}^{\infty}\big[f^{(k)}(x)\big]^{2} \, dx\geq\frac{[
(2k-3)!!]^{2}}{2^{2k-2}}(-\varepsilon^{2}+(2k-1)\varepsilon)\int
_{0}^{\infty} \frac{f(x)^2}{x^{2k}} \, dx,   \lb{4.16}
\end{equation}
implying \eqref{4.8}. The maximum of the function
$\varepsilon\rightarrow-\varepsilon^{2}+(2k-1)\varepsilon$ over $(0,\infty)$
occurs at $\varepsilon=(2k-1)/2$ with the maximum value being $(2k-1)^{2}/4.$
Substituting this value into \eqref{4.16} yields
\begin{align}
\begin{split} 
\int_{0}^{\infty}\big[f^{(k)}(x)\big]^{2} \, dx  &  \geq\frac{[
(2k-3)!!]^{2}}{2^{2k-2}}\frac{(2k-1)^{2}}{4}\int_{0}^{\infty} 
\frac{f(x)^2}{x^{2k}} \, dx   \\
&  =\frac{[(2k-1)!!]^{2}}{2^{2k}}\int_{0}^{\infty} 
\frac{f(x)^2}{x^{2k}} \, dx,
\end{split} 
\end{align}
completing the proof of \eqref{4.9}. Strict inequality in \eqref{4.9} is clear from Corollary \ref{c4.3}, alternatively, one can apply the argument following \eqref{4.5a} (with $j=1$, $\alpha = 0$) and a similar one involving the final $\varepsilon$-step in \eqref{4.14}. 
\end{proof}
%%%%%%

%%%%%%
\begin{remark} \lb{r4.5} 
Hardy's work on his celebrated inequality started in 1915, \cite{Ha15} (see also 
\cite{Ha19}--\cite{Ha25}, \cite[Sect.~9.8]{HLP88}, and the historical comments in 
\cite[Chs.~1, 3, App.]{KMP07}). Higher-order Hardy inequalities, including weight functions, are discussed in \cite[Ch.~4]{KP03} and \cite[Sect.~10]{OK90}, however, Birman's sequence of inequalities, \cite{Bi66}, is not mentioned in these sources. 
\hfill $\diamond$
\end{remark}
%%%%%%%

%%%%%%
\begin{remark} \lb{r4.6} 
The characterization of functions in $H_{n}$ in Theorem \ref{t3.5} provides
us with two equivalent ways of expressing Birman's inequalities. Indeed, we have
already established that 
\begin{equation} 
\int_{0}^{\infty} \big| f^{(n)}(x) \big|^{2}dx\geq\dfrac{[(2n-1)!!]^{2}}{2^{2n}}
\int_{0}^{\infty} \dfrac{|f(x)|^2}{x^{2n}} \, dx, \quad f\in H_{n}([0,\infty)).
\end{equation}
Alternatively, via Theorem \ref{t3.5}, one can
now express this inequality as
\begin{align} 
& \int_{0}^{\infty}\left\vert f(x)\right\vert ^{2}dx\geq\dfrac{[(2n-1)!!]^{2}}{2^{2n}}\int_{0}^{\infty}\dfrac{1}{x^{2n}}\left\vert
\int_{0}^{x}\int_{0}^{t_{1}}\cdots\int_{0}^{t_{n-1}}f(t) \, dt \, dt_{n-1}\dots
dt_{1}\right\vert ^{2}dx    \no \\ 
& \hspace*{8.5cm} f\in L^{2}((0,\infty)). 
\end{align}
In the case $n=1,$ both forms of Hardy's inequality are
given in \cite{HLP88} (cf.\ Theorem 253, p.\ 175, and Theorem 327, p.\ 240). 
\hfill $\diamond$
\end{remark} 
%%%%%%%

The constants $[(2n-1)!!]^{2}/2^{2n}$ in Birman's sequence of inequalities \eqref{4.9} are optimal as shown in the following section.

%%%%%%%%%%%%%%%%%%%%%%%%%%%%%%%
%%%%%%%%%%%%%%%%%%%%%%%%%%%%%%%
\section{Optimality of Constants} 
\lb{s6}
%%%%%%%%%%%%%%%%%%%%%%%%%%%%%%%
%%%%%%%%%%%%%%%%%%%%%%%%%%%%%%%

The principal purpose of this section is to prove the following result:

%%%%%%
\begin{theorem} \lb{t6.1} 
The constants $[(2n-1)!!]^{2}/2^{2n}$ in the Birman sequence of Hardy--Rellich-type inequalities 
\begin{equation}
\int_{0}^{\infty}\big| f^{(n)}(x)\big|^{2} \, dx \geq \frac{[(2n-1)!!]^{2}}{2^{2n}}
\int_{0}^{\infty} \frac{|f(x)|^2}{x^{2n}} \, dx, \quad f \in H_{n}([0,\infty)), \; n\in\bbN, \lb{6.4} 
\end{equation}
are optimal in the following sense: The inequality \eqref{6.4} ceases to be valid if  $[(2n-1)!!]^{2}/2^{2n}$ is 
replaced by  $[(2n-1)!!]^{2}/2^{2n} + \varepsilon$ for any $\varepsilon > 0$ on the right-hand side of \eqref{6.4}. 
\end{theorem}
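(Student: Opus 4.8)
The plan is to prove optimality by constructing an explicit one-parameter family $\{f_{\delta}\}_{\delta \in (0,1]} \subset H_{n}([0,\infty))$ for which the quotient of the two sides of \eqref{6.4} tends to the Birman constant as $\delta \downarrow 0$; the stated impossibility of improving the constant then follows at once by contradiction. The guiding observation is that the scale-invariant (critical) exponent for \eqref{6.4} is $s_{0} = n - 1/2$: for the pure power $x^{s_{0}}$ one has $\big(x^{s_{0}}\big)^{(n)} = P_{n}\, x^{-1/2}$ and $x^{s_{0}}/x^{n} = x^{-1/2}$, where
\[
P_{n} = \prod_{j=0}^{n-1}(s_{0} - j) = \big(n - \tfrac{1}{2}\big)\big(n - \tfrac{3}{2}\big)\cdots \tfrac{1}{2} = \frac{(2n-1)!!}{2^{n}},
\]
so the formal Rayleigh quotient equals $P_{n}^{2} = [(2n-1)!!]^{2}/2^{2n}$, exactly the Birman constant. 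The power $x^{s_{0}}$ itself is inadmissible because $x^{-1/2} \notin L^{2}((0,\infty))$ (both relevant integrals diverge logarithmically), so the idea is to regularize it by a small shift in the exponent together with a compactly supported cutoff.

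Concretely, I would fix $\eta \in C^{\infty}([0,\infty))$ with $\eta \equiv 1$ on $[0,1]$ and $\eta \equiv 0$ on $[2,\infty)$, and set $f_{\delta}(x) := x^{n - 1/2 + \delta}\, \eta(x)$. Since $n - 1/2 + \delta > n - 1$, the first $n$ derivatives of $x^{n-1/2+\delta}$ carry a positive power of $x$ and hence vanish at the origin, yielding $f_{\delta}^{(j)}(0) = 0$ for $j = 0, \dots, n-1$; moreover $f_{\delta}^{(n)}(x) \sim x^{-1/2+\delta}$ near $0$ lies in $L^{2}$ there (as $-1 + 2\delta > -1$) and $f_{\delta}$ has compact support, so indeed $f_{\delta} \in H_{n}([0,\infty))$.

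I would then evaluate both integrals by splitting at $x = 1$. On $[0,1]$ one has $f_{\delta}(x) = x^{n - 1/2 + \delta}$ exactly, so $f_{\delta}^{(n)}(x) = P_{n}(\delta)\, x^{-1/2 + \delta}$ with $P_{n}(\delta) = \prod_{j=0}^{n-1}(n - 1/2 + \delta - j) \to P_{n}$, and a direct computation gives
\[
\int_{0}^{1} \big[f_{\delta}^{(n)}(x)\big]^{2}\, dx = \frac{P_{n}(\delta)^{2}}{2\delta}, \qquad
\int_{0}^{1} \frac{f_{\delta}(x)^{2}}{x^{2n}}\, dx = \frac{1}{2\delta}.
\]
On $[1,2]$ the Leibniz rule shows $f_{\delta}^{(n)}$ and $f_{\delta}/x^{n}$ are bounded uniformly in $\delta \in (0,1]$, so each integral contributes only $O(1)$ as $\delta \downarrow 0$, while on $[2,\infty)$ both vanish. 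Hence the Rayleigh quotient satisfies
\[
\frac{\int_{0}^{\infty} \big[f_{\delta}^{(n)}(x)\big]^{2}\, dx}{\int_{0}^{\infty} f_{\delta}(x)^{2}/x^{2n}\, dx}
= \frac{P_{n}(\delta)^{2}/(2\delta) + O(1)}{1/(2\delta) + O(1)}
\underset{\delta \downarrow 0}{\longrightarrow} P_{n}^{2} = \frac{[(2n-1)!!]^{2}}{2^{2n}}.
\]
Were \eqref{6.4} to hold with $[(2n-1)!!]^{2}/2^{2n}$ replaced by $[(2n-1)!!]^{2}/2^{2n} + \varepsilon$ for some $\varepsilon > 0$, testing it against $f_{\delta}$ would force the left-hand quotient to be $\geq [(2n-1)!!]^{2}/2^{2n} + \varepsilon$ for every $\delta$, contradicting the displayed limit once $\delta$ is small enough. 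The construction is routine; the only point demanding care is the bookkeeping showing that as $\delta \downarrow 0$ both integrals are dominated by their identically diverging contributions near $x = 0$, so that the cutoff region on $[1,2]$ contributes a vanishing relative error. Beyond identifying the critical exponent $s_{0} = n - 1/2$ and checking admissibility of $f_{\delta}$ in $H_{n}([0,\infty))$, there is no substantial obstacle.
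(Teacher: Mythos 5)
Your proof is correct and follows essentially the same route as the paper: both arguments regularize the critical power $x^{n-1/2}$ by a small exponent shift ($\delta>0$, the paper's $\sigma=-\tfrac12+\delta$) near the origin, cut off at infinity to gain admissibility in $H_n([0,\infty))$, and show the Rayleigh quotient tends to $[(2n-1)!!]^2/2^{2n}$ as the shift vanishes. The only difference is technical bookkeeping: the paper truncates the $n$-th derivative sharply (so its test function is a polynomial tail beyond $x=a$, with membership in $H_n$ via the identification $H_n=D_n$), whereas you multiply by a smooth cutoff and absorb the cutoff region into an $O(1)$ error dominated by the $1/(2\delta)$ divergence.
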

%%%%%%
\begin{proof}
We follow the strategy of proof in \cite[p.~4]{BEL15} in connection with weighted Hardy 
inequalities. Fix some $a>0$ and let $\chi_{(0,a)}$ denote the characteristic function on $(0,a)$.
For $\sigma  > -1/2$, let $f_{\sigma } \in L^{2}((0,\infty))$ be given by
\begin{equation}  \lb{6.5}
f_{\sigma }(x) := x^{\sigma } \chi_{(0,a)}(x), \quad x \in (0, \infty), 
\end{equation}
and define
\begin{equation}  \lb{6.6}
F_{n,\sigma }(x) := \int_{0}^{x}  \int_{0}^{t_1} \cdots \int_{0}^{t_{n-1}} f_{\sigma }(u) 
\, du \, dt_{n-1} \dots dt_1, \quad x \in (0, \infty). 
\end{equation}
One observes that $F_{n, \sigma } \in D_{n}([0,\infty)) = H_{n}([0,\infty))$ by Theorem \ref{t3.5}.
Since $F_{n, \sigma }^{(n)} = f_{\sigma }$ a.e. on $[0,\infty)$ one has 
\begin{equation}
\int_{0}^{\infty} \left| F_{n,\sigma }^{(n)}(x) \right|^{2} \, dx
= \int_{0}^{\infty} |f_{\sigma }(x)|^{2} \, dx
= \int_{0}^{a} x^{2\sigma }dx
= \frac{a^{1 + 2\sigma}}{1 + 2\sigma}. 
\end{equation} 
An induction argument then shows that
\begin{equation}
F_{n, \sigma }(x) = 
\begin{cases}
\frac{x^{n + \sigma}}{\prod_{j=1}^{n}(j + \sigma)},  & 0 \leq x \leq a, \\[2mm] 
 \sum_{k=0}^{n-1}b_{k}x^{k},   & x>a,
\end{cases}      \lb{6.7} 
\end{equation}
where
\begin{equation}   \lb{6.9}
b_{k} := \frac{(-1)^{n-k+1}a^{n - k + \sigma}}{k!(n-k-1)!(n - k + \sigma)}, \quad 0 \leq k \leq n-1. 
\end{equation}
A straightforward computation yields
\begin{align}
\int_{0}^{\infty} x^{-2n}F_{n, \sigma }^{2}(x) \, dx 
&= \int_{0}^{a} x^{-2n} F_{n, \sigma }^{2}(x) \, dx 
+ \int_{a}^{\infty} x^{-2n} F_{n,\sigma }^{2}(x) \, dx   \no \\
&= \frac{1}{\prod_{j=1}^{n}(j + \sigma)^2} \int_{0}^{a} x^{2\sigma } \, dx  + \int_{a}^{\infty} x^{-2n}\left( \sum_{k=0}^{n-1} b_{k}x^{k}  \right)^{2} \, dx   \no \\
&= \frac{a^{1 + 2\sigma}}{(1 + 2\sigma)\prod_{j=1}^{n}(j + \sigma)^2} +  C(a), 
\end{align}
where
\begin{equation}
0 < C(a) := \int_{a}^{\infty} x^{-2n}\left( \sum_{k=0}^{n-1} b_{k}x^{k}  \right)^{2} \, dx 
\underset{a \uparrow \infty}{\longrightarrow} 0.
\end{equation}
Thus, 
\begin{align}
\begin{split} 
\frac{ \int_{0}^{\infty} f_{\sigma }^{2}(x) \, dx}{\int_{0}^{\infty} x^{-2n} F_{n, \sigma }^{2}(x) \, dx }
& = \frac{\prod_{j=1}^{n}(j + \sigma)^2}{1 + (1 + 2 \sigma) C(a) a^{-1 - 2 \sigma} \prod_{j=1}^{n}(j + \sigma)^2}     \\
& = \f{\prod_{j=1}^{n}(2 j - 1)^2}{2^{2n}} + (1 + 2 \sigma) D(a, \sigma) + \Oh\big((1 + 2 \sigma)^2\big), 
\end{split} 
\end{align}
where $D(\, \cdot, \cdot)$ satisfies $D(a, \sigma) > 0$, $D(a, -1/2) > 0$, if $a$ is chosen sufficiently large, due to the fact that 
$C(a) \underset{a \uparrow \infty}{\longrightarrow} 0$. Thus, choosing $ \sigma$ sufficiently close to $-1/2$ 
will undercut any choice of $\varepsilon > 0$ in a replacement of $[(2n-1)!!]^{2}/2^{2n}$ by 
$[(2n-1)!!]^{2}/2^{2n} + \varepsilon$ for {\it any} $\varepsilon > 0$ on the right-hand side of \eqref{6.4}.  
\end{proof}
%%%%%%

Without going into further details we note that the argument just presented also works for the 
weighted extension of the Birman inequalities \eqref{4.1}. 

We also note that the constants in \eqref{6.4} coincide of course with the ones obtained by Yafaev \cite{Ya99} upon specializing his result to the spherically symmetric case. 

%%%%%%
\begin{remark} \lb{r6.2} 
Let $n \in \bbN$. To motivate the choice of the function $f_{\sigma}$, and hence that of 
$F_{n, \sigma}(x) = c \, x^{n +\sigma} = c \, x^{n - (1/2) + \varepsilon}$, 
writing $\sigma = - (1/2) + \varepsilon$, $\varepsilon > 0$, near $x = 0$ in the above proof,  
it suffices to recall that Birman's inequalities, 
\begin{equation} 
\int_{0}^{\infty} \big| f^{(n)}(x) \big|^{2}dx \geq \dfrac{[(2n-1)!!]^{2}}{2^{2n}}
\int_{0}^{\infty} \dfrac{|f(x)|^2}{x^{2n}} \, dx, \quad f\in H_{n}([0,\infty)),
\end{equation}
are naturally associated with the differential expression 
\begin{equation}
\tau_{2n} := (-1)^n \f{d^{2n}}{dx^{2n}} - \dfrac{[(2n-1)!!]^{2}}{2^{2n}} \f{1}{|x|^{2n}}, 
\quad x \in (0,\infty).
\end{equation}
According to Birman's inequalities, 
\begin{equation}
\tau_{2n}\big|_{C_0^{\infty}((0,\infty))} \geq 0, 
\end{equation}
and the function $y_n(x)=x^{n-(1/2)}$, $x \geq 0$, satisfies 
\begin{equation}
\tau_{2n} y_n = 0
\end{equation}
and hence formally saturates the lower bound $0$ of $\tau_{2n}$. To ensure membership in 
$H_n([0,\infty))$ one thus regularizes $y_n$ with the help of the parameter $\varepsilon > 0$, 
yielding  $F_{n, \sigma}(x)= c \, x^{n - (1/2) + \varepsilon}$, $x \geq 0$.  \hfill $\diamond$
\end{remark}
%%%%%%

%%%%%%%%%%%%%%%%%%%%%%%%%%%%%%%
%%%%%%%%%%%%%%%%%%%%%%%%%%%%%%%
\section{The Continuous Ces\`aro Operator $T_1$ and its Generalizations $T_n$} 
\lb{s7}
%%%%%%%%%%%%%%%%%%%%%%%%%%%%%%%
%%%%%%%%%%%%%%%%%%%%%%%%%%%%%%%

As shown in the proof of Proposition \ref{p3.1}, for any $f\in
L^{2}((0,\infty)),$ 
\begin{equation}
\int_{0}^{x}\int_{0}^{t_{1}}\cdots\int_{0}^{t_{n-1}}f(u) \, du \, dt_{n-1}\dots
dt_{1}\in H_{n}([0,\infty)),    \lb{7.1}
\end{equation}
thus, we can introduce for $n \in \bbN$, 
\begin{align}
\begin{split} 
& (T_{n}f)(x):=\dfrac{1}{x^{n}}\int_{0}^{x}\int_{0}^{t_{1}}\cdots\int
_{0}^{t_{n-1}}f(u) \, du \, dt_{n-1}\dots dt_{1}, \quad 
\quad  x \in (0,\infty), \\ 
& f \in \dom(T_n) = L^{2}((0,\infty)).    \lb{7.2} 
\end{split} 
\end{align}
The operator $T_{n}$ is patterned after the continuous Ces\`aro operator, 
\begin{equation}
(T_{1}f)(x):=\dfrac{1}{x}\int_{0}^{x}f(t) \, dt, \quad x \in (0,\infty), \; f \in L^{2}((0,\infty)).  \lb{7.3} 
\end{equation}

We now prove the following result.

%%%%%%
\begin{theorem} \lb{t7.1} 
Let $n\in\bbN  $ and define $T_{n}$ as in 
\eqref{7.2}, \eqref{7.3}. Then $T_{n}$ is a bounded linear operator on $L^{2}((0,\infty))$
with norm
\begin{equation}
\Vert T_{n}\Vert = \dfrac{2^{n}}{(2n-1)!!}.    \lb{7.4}
\end{equation}
%however, $T_n$ is not compact in $L^{2}((0,\infty))$. 
\end{theorem}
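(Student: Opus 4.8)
The plan is to compute $\norm{T_n}$ exactly by diagonalizing $T_n$ with the Mellin transform; this route produces boundedness and the precise norm simultaneously, and it prepares the spectral analysis occupying the remainder of the section. First I would recast $T_n$ as a Mellin convolution. Collapsing the iterated integral in \eqref{7.2} by Cauchy's formula for repeated integration yields
\[
(T_n f)(x) = \f{1}{(n-1)!\,x^n}\int_0^x (x-y)^{n-1} f(y)\,dy = \int_0^\infty \kappa(x/y)\,f(y)\,\f{dy}{y},
\]
with
\[
\kappa(r) = \f{(r-1)^{n-1}}{(n-1)!\,r^n}\,\chi_{(1,\infty)}(r), \quad r \in (0,\infty),
\]
so that the integral kernel of $T_n$ is homogeneous of degree $-1$.

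Next I would invoke the unitary Mellin transform $\cM : L^2((0,\infty)) \to L^2(\bbR)$, $(\cM f)(\tau) = (2\pi)^{-1/2}\int_0^\infty x^{-1/2 - i\tau} f(x)\,dx$, under which a Mellin convolution with a degree-$(-1)$ homogeneous kernel becomes multiplication by the symbol $m(\tau) = \int_0^\infty \kappa(r)\,r^{-1/2 - i\tau}\,dr$. Substituting $u = 1/r$ turns this into a Beta integral,
\[
m(\tau) = \f{1}{(n-1)!}\int_0^1 (1-u)^{n-1} u^{(1/2 + i\tau) - 1}\,du = \f{\Gamma\big(\tfrac12 + i\tau\big)}{\Gamma\big(\tfrac12 + i\tau + n\big)} = \prod_{j=0}^{n-1} \f{1}{(j + \tfrac12) + i\tau}.
\]

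Because $\cM$ is unitary, $T_n$ is bounded with $\norm{T_n} = \norm{m}_{L^\infty(\bbR)}$. Here
\[
|m(\tau)| = \prod_{j=0}^{n-1} \big[(j + \tfrac12)^2 + \tau^2\big]^{-1/2}, \quad \tau \in \bbR,
\]
and since each factor is strictly maximal at $\tau = 0$, so is the product; hence $\norm{T_n} = |m(0)| = \prod_{j=0}^{n-1}(j + \tfrac12)^{-1} = 2^n/[1\cdot 3\cdots(2n-1)] = 2^n/(2n-1)!!$, as claimed. (A quicker, if less self-contained, derivation of the same value proceeds through results already at hand: for $f \in L^2((0,\infty))$ the $n$-fold antiderivative $F_n$ of $f$ based at $0$ lies in $H_n([0,\infty))$ with $F_n^{(n)} = f$ and $T_n f = F_n/x^n$ by \eqref{7.1} and Theorem \ref{t3.5}, so Theorem \ref{t4.4} gives $\norm{f}^2 \ge [(2n-1)!!]^2 2^{-2n} \norm{T_n f}^2$, i.e.\ $\norm{T_n} \le 2^n/(2n-1)!!$, while the optimizing family $f_\sigma = x^\sigma \chi_{(0,a)}$ from the proof of Theorem \ref{t6.1} shows this bound is attained as $\sigma \downarrow -1/2$.)

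The Beta-integral evaluation and the maximization of $|m|$ over $\tau$ are routine. The step demanding care — and the crux of the argument — is the Mellin reduction itself: one must verify that the kernel representation above really coincides with $T_n$ on a dense subspace (say $C_0^\infty((0,\infty))$), and that $\kappa$ belongs to the class for which $\cM$ intertwines Mellin convolution with multiplication by the bounded symbol $m \in L^\infty(\bbR)$. It is exactly this intertwining that both re-establishes boundedness of $T_n$ on $L^2((0,\infty))$ and identifies $\norm{T_n}$ with $\norm{m}_\infty$; once it is in place, the remainder is bookkeeping.
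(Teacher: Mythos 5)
Your proof is correct, but it takes a genuinely different route from the paper's. The paper proves Theorem \ref{t7.1} in two strokes from results already in hand: writing $F$ for the $n$-fold antiderivative of $f\in L^2((0,\infty))$, Theorem \ref{t3.5} puts $F \in H_n([0,\infty))$ with $F^{(n)}=f$ and $T_nf = F/x^n$, so the Birman inequality (Theorem \ref{t4.4}) gives $\|T_n\|\le 2^n/(2n-1)!!$, and optimality of the Birman constant (Theorem \ref{t6.1}) gives the reverse inequality --- exactly the ``quicker'' derivation you relegate to a parenthesis. Your main argument instead collapses the iterated integral via Cauchy's repeated-integration formula into a kernel homogeneous of degree $-1$ and diagonalizes $T_n$ by the Mellin transform, with symbol $m(\tau)=\Gamma\big(\tfrac12+i\tau\big)\big/\Gamma\big(n+\tfrac12+i\tau\big)=\prod_{j=0}^{n-1}\big[\big(j+\tfrac12\big)+i\tau\big]^{-1}$; your computations (the kernel identification, the Beta integral, the maximization at $\tau=0$) all check out. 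What this buys: your route is self-contained (independent of Sections \ref{s4} and \ref{s6}), it furnishes an independent proof of the sharpness of the Birman constants, and it delivers in one stroke the spectral analysis that the paper assembles only later and more indirectly --- the paper Mellin-diagonalizes only $T_1$ (Theorem \ref{t7.6}) and reaches $T_n$ through the identity $T_n=r_n(T_1)$ (Theorems \ref{t7.5} and \ref{t7.9}); indeed your symbol is precisely $r_n$ evaluated at the $T_1$-symbol $\big(\tfrac12+i\tau\big)^{-1}$, so your computation subsumes Theorem \ref{t7.9} as well. What the paper's route buys is economy and the explicit logical link between sharpness of the Birman constant and the operator norm. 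The step you flag but do not carry out --- that the Mellin-multiplier operator really is $T_n$ on all of $L^2((0,\infty))$, not merely on a dense set --- is genuinely needed but standard: since $\int_0^\infty \kappa(r)\, r^{-1/2}\,dr<\infty$, the classical homogeneous-kernel bound (cf.\ \cite[Thm.~319]{HLP88}, or a Schur test) already makes the kernel operator bounded on $L^2((0,\infty))$, so agreement with $\cM^{-1}M_m\cM$ on $C_0^{\infty}((0,\infty))$ extends to all of $L^2((0,\infty))$ by continuity of both sides; note also that $\|M_m\|$ is the essential supremum of $|m|$, which equals $|m(0)|$ by continuity of $m$, and that this supremum is attained by no $f\neq 0$, consistent with the strict inequality in Theorem \ref{t4.4}.
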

%%%%%%
\begin{proof}
We abbreviate the reciprocal of the square root of the Birman constant by 
\begin{equation}
B_{n}:=\dfrac{2^{n}}{(2n-1)!!}. \lb{7.5}
\end{equation}
Let $f\in L^{2}((0,\infty))$, and write
\begin{equation}
F(x)=\int_{0}^{x}\int_{0}^{t_{1}}\cdots\int_{0}^{t_{n-1}}f(u) \, du \, dt_{n-1}\dots
dt_{1}, \quad x \in (0, \infty).
\end{equation}
Then $F\in H_{n}([0,\infty))$ and
\begin{equation}
F^{(n)}=f\text{ a.e. on $[0,\infty).$}
\end{equation}
Hence, by \eqref{4.9} in Theorem \ref{t4.4}, one
concludes that
\begin{equation}
\int_{0}^{\infty} \frac{|F(x)|^2}{x^{2n}} \, dx\leq B_{n}
^{2}\int_{0}^{\infty} \big|F^{(n)}(x)\big|^{2}dx=B_{n}^{2}\int
_{0}^{\infty}\left\vert f(x)\right\vert ^{2}dx.\lb{7.8}
\end{equation}
Since $T_{n}f=F/x^{n}$, \eqref{7.8} implies
\begin{equation}
\Vert T_{n}f\Vert_{L^2((0,\infty))} \leq B_{n}\Vert f\Vert_{L^2((0,\infty))};
\end{equation}
in particular, $T_{n}$ is bounded and $\Vert T_{n}\Vert\leq B_{n}$. To show
$\Vert T_{n}\Vert=B_{n},$ let $0<K<B_{n}$ so $K^{2}<B_{n}^{2}$. Since, by
Theorem \ref{t6.1}, the constant $B_{n}^{2}$ is sharp, there
exists $G\in H_{n}([0,\infty))$ such that
\begin{equation}
\int_{0}^{\infty} \frac{|G(x)|^2}{x^{2n}} \, dx > K^{2}\int
_{0}^{\infty} \big|G^{(n)}(x)\big|^{2} \, dx.
\end{equation}
Let $g:=G^{(n)}\in L^{2}((0,\infty))$ such that 
\begin{equation}
G(x)=\int_{0}^{x}\int_{0}^{t_{1}}\cdots\int_{0}^{t_{n-1}}g(u) \, du \, dt_{n-1}\dots
dt_{1}.
\end{equation}
Then $T_{n}g=G/x^{n}$ and
\begin{equation}
\Vert T_{n}g\Vert_{L^2((0,\infty))}>K\Vert g\Vert_{L^2((0,\infty))}.  
\end{equation}
Thus, 
\begin{align}
B_{n} &=\inf \big\{C>0\, \big| \,\Vert T_{n}f\Vert_{L^2((0,\infty))} \leq C\Vert f\Vert_{L^2((0,\infty))} 
\, \text{for all} \, f\in L^{2}((0,\infty))\big\backslash \{0\}\big\}   \no \\ 
&=\Vert T_{n}\Vert, 
\end{align}
completing the proof. 
\end{proof}
%%%%%%

It will soon be clear that while $T_n$ is bounded, it is noncompact, see \eqref{7.70}.

\smallskip 

Next, we turn to the inverse of $T_n$ and state the following fact.

%%%%%%
\begin{lemma} \lb{l7.2} 
Let $n \in \bbN$, then
\begin{align} \lb{7.14} 
&(T_{n}^{-1}f)(x) = \frac{d^n}{dx^n} x^{n}f(x),  \quad x \in (0,\infty),     \\
& f \in \dom\big(T_{n}^{-1}\big) = \big\{g \in L^{2}((0,\infty)) \ \big|\  g \in AC^{(n-1)}_{loc}((0,\infty)); 
\, (x^{n} g)^{(n)} \in L^{2}((0,\infty));    \no \\
&\hspace{6.8cm} \lim_{x \downarrow 0}(x^{n} g(x))^{(j)}= 0, \, j=0,\dots, n-1 \}.   \no
\end{align}
\end{lemma}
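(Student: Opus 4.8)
The plan is to identify $T_n^{-1}$ by unwinding the definition of $T_n$ and then to characterize its range, which must coincide with the domain asserted in \eqref{7.14}. First I would record that $T_n$ is injective: if $T_n f = 0$ for $f \in L^2((0,\infty))$, then writing $F(x) = \int_0^x \cdots \int_0^{t_{n-1}} f(u)\,du\,dt_{n-1}\cdots dt_1$, the relation $(T_n f)(x) = F(x)/x^n$ forces $F \equiv 0$ on $(0,\infty)$, whence $f = F^{(n)} = 0$ a.e. Thus $T_n^{-1}$ is well defined on $\ran(T_n)$.

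Next, the formula itself falls out algebraically. For $f \in L^2((0,\infty))$ set $g := T_n f = F/x^n$, so that $x^n g = F \in H_n([0,\infty)) = D_n([0,\infty))$ by Theorem \ref{t3.5}. Differentiating $n$ times gives $(x^n g)^{(n)} = F^{(n)} = f$, that is, $T_n^{-1} g = (d^n/dx^n)(x^n g)$, which is the formula in \eqref{7.14}.

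It remains to verify the two inclusions establishing that $\ran(T_n)$ equals the set on the right-hand side of \eqref{7.14}. For the forward inclusion, take $g = T_n f$ as above and read off the properties of $F = x^n g$ from $F \in H_n([0,\infty))$: since $F^{(j)} \in AC_{loc}([0,\infty))$ for $j = 0,\dots,n-1$ and $x^{-n}$ is smooth and nonvanishing on $(0,\infty)$, the product $g = F \cdot x^{-n}$ lies in $AC^{(n-1)}_{loc}((0,\infty))$; moreover $(x^n g)^{(n)} = F^{(n)} = f \in L^2((0,\infty))$, and the boundary conditions $F^{(j)}(0) = 0$ built into $H_n([0,\infty))$ yield $\lim_{x\downarrow 0}(x^n g(x))^{(j)} = 0$ for $j = 0,\dots,n-1$. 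Conversely, given $g$ in the asserted domain, set $f := (x^n g)^{(n)} \in L^2((0,\infty))$ and $F := x^n g$; the vanishing-limit conditions allow one to extend each $F^{(j)}$, $j = 0,\dots,n-1$, continuously to $x = 0$ with value $0$, so that $F^{(j)} \in AC_{loc}([0,\infty))$, $F^{(n)} \in L^2((0,\infty))$, and $F^{(j)}(0) = 0$; hence $F \in H_n([0,\infty))$. Theorem \ref{t3.5} then identifies $F$ with the $n$-fold iterated integral of $f$, whence $g = F/x^n = T_n f \in \ran(T_n)$ and $T_n^{-1} g = f = (x^n g)^{(n)}$.

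The main obstacle I anticipate is the bookkeeping in the converse inclusion: one must check that the vanishing-limit conditions at $x = 0$, together with local regularity on $(0,\infty)$, genuinely promote $F = x^n g$ to an element of $H_n([0,\infty)) = D_n([0,\infty))$ rather than merely to a function whose $n$-th derivative is square integrable. Here the role of Theorem \ref{t3.5} in pinning $F$ down as \emph{exactly} the iterated integral of $f$ is essential, since the boundary conditions $F^{(j)}(0) = 0$ are precisely what kills the polynomial ambiguity appearing in the proof of that theorem and thereby guarantees $g = T_n f$ rather than $g$ differing from a genuine $T_n$-image by a lower-order correction.
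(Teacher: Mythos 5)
Your proposal is correct and follows essentially the same route as the paper's proof: establish injectivity of $T_n$, read off the formal inverse $(x^n g)^{(n)}$, and then identify the stated domain with $\ran(T_n)$, the boundary conditions at $x=0$ doing the decisive work. The only real difference is one of rigor: where the paper merely asserts that the limits $\lim_{x \downarrow 0}(x^n g(x))^{(j)} = 0$, $j=0,\dots,n-1$, are exactly what is needed for $T_n\big((x^n g)^{(n)}\big) = g$, you make this precise by promoting $F = x^n g$ to an element of $H_n([0,\infty)) = D_n([0,\infty))$ (Theorem \ref{t3.5}), which pins $F$ down as the $n$-fold iterated integral of $(x^n g)^{(n)}$ and thereby eliminates the polynomial ambiguity --- a cleaner justification of the converse inclusion than the paper's terse composition argument.
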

%%%%%%
\begin{proof}
For $f \in\ran(T_{n})\cap AC_{loc}^{(n-1)}((0,\infty)),$ consider the equation 
\begin{equation} 
(T_{n}g)(x)=f(x), 
\end{equation} 
or, equivalently, 
\begin{equation}
g(x)=(x^{n} f(x))^{(n)}.     \lb{7.16}
\end{equation}
Since $T_{n}$ is one-to-one, it is clear from \eqref{7.16} that the form 
of the inverse of $T_{n}$ is given by
\begin{equation}
(R_{n}g)(x)=(x^{n} g(x))^{(n)}, \quad x \in (0,\infty).      \lb{7.17}
\end{equation}
We now seek to find the (largest) domain, $\dom(R_{n})\subseteq L^{2}((0,\infty)).$ For any 
such choice of domain, 
\begin{equation}
\dom(R_n) \subseteq \big\{g \in L^{2}((0,\infty)) \, \big| \, g \in AC_{loc}^{(n-1)}((0,\infty)); 
\, (x^{n}g(x))^{(n)} \in L^{2}((0,\infty))\},
\end{equation}
and it is clear that
\begin{equation}
(R_{n}\circ T_{n}) g(x)=R_{n}(T_{n} g)(x)=g(x).      \lb{7.19}
\end{equation}
Conversely, we see that if
\begin{align}
\begin{split} 
g(x)=(T_{n}\circ R_{n}) g(x) &  =T_{n}(R_{n} g)(x)       \\
&  =\dfrac{1}{x^n}\int_{0}^{x}\int_{0}^{t_{1}}\cdots\int_{0}^{t_{n-1}}
(t^{n} g(t))^{(n)}dtdt_{n-1}\cdots dt_{1},        \lb{7.20}
\end{split} 
\end{align}
then it is necessary, for $j=0,1,\ldots,n-1,$ that the limits
\begin{equation}
\lim_{x\rightarrow0^{+}}(x^{n}g(x))^{(j)}
\end{equation}
all must exist and equal $0.$ Consequently, if we define
\begin{align}
& \dom(R_{n}):= \big\{g \in L^{2}((0,\infty)) \, \big| \, g \in AC_{loc}^{(n-1)}((0,\infty)); \, 
(x^{n} g(x))^{(n)}\in L^{2}(0,\infty),     \no \\ 
& \hspace*{2,5cm} \lim_{x\rightarrow
0^{+}}(x^{n} g(x))^{(j)} \, \text{exists and equals $0$, $j =0,1,\ldots n-1$}\},     \lb{7.22}
\end{align}
then, by \eqref{7.19} and \eqref{7.20}, it follows that operator $R_{n},$ defined by 
\eqref{7.17} and \eqref{7.22}, is the inverse of $T_{n}$ in $L^{2}((0,\infty)).$
\end{proof}
%%%%%%

One notes that $\dom(R_{n}) = \ran(T_n)$ as given in \eqref{7.22} is dense in $L^{2}((0,\infty)).$ Indeed, 
one verifies that for any $\alpha>-1,$
\begin{equation}
\big\{x^{\alpha/2}e^{-x/2}L_{m}^{\alpha}(x) \,\big| \, m\in\mathbb{N}_{0}\big\}
\subset \dom(R_{n}), 
\end{equation}
where $\{L_{m}^{\alpha}\}_{m=0}^{\infty}$ is the sequence of Laguerre
polynomials which forms a complete orthogonal set in the Hilbert space
$L^{2}((0,\infty)).$ Indeed, it is clear that for
$j=0,1,\ldots,n-1,$
\begin{equation}
\lim_{x\rightarrow0}(x^{n+\alpha/2}e^{-x/2}L_{m}^{\alpha}(x))^{(j)}=0 \, \text{ and } \, 
(x^{n+\alpha/2}e^{-x/2}L_{m}^{\alpha}(x))^{(n)}\in L^{2}((0,\infty)).
\end{equation}

In the following we will show that the $n$ boundary conditions in \eqref{7.14} can actually be replaced  by the $(n-1)$ $L^2$-conditions
\begin{equation}
x^j g^{(j)} \in L^2((0,\infty)), \quad j=1,\dots,n-1. 
\end{equation}
To prove this we start with the following elementary observations: For $n \in \bbN$ and 
$x \in (0, \infty)$, 
\begin{align}
& (i) \, f \in AC_{loc}((0,\infty)) \, \text{ if and only if } 
\, x^{n}f \in AC_{loc}((0,\infty)).    \lb{7.26} \\[1mm] 
& (ii) \, (x^{n}f(x))^{(k)} = a_{k}(n,k)x^{n}f^{(k)}(x) + \cdots + a_{0}(n,k)x^{n-k}f(x),  \quad 0 \leq k \leq n, 
\no   \\
& \qquad \text{where } \, a_{j}(n,k)= {k \choose j} \prod_{\ell = 0}^{k-j-1}(n - \ell),  \quad 0 \leq j \leq k, 
\lb{7.27} \\[1mm] 
& (iii) \, x^{k}(xf(x))^{(k+1)} = x^{k+1}f^{(k+1)}(x) + (k+1)x^{k}f^{(k)}(x), \quad k \in \bbN_0, 
\lb{7.29} \\[1mm] 
& (iv) \, x(x^{k}f^{(k)}(x))' = x^{k+1}f^{(k+1)}(x) + kx^{k}f^{(k)}(x),  \quad k \in \bbN_0.   \lb{7.30}
\end{align}

%%%%%
\begin{lemma} \lb{l7.3}
Let $n \in \bbN$, then
\begin{align}
& \dom\big(T_{1}^{-n}\big)      \lb{7.31}  \\
& \quad =  \big\{g \in L^{2}((0,\infty)) \, \big| \, g \in AC_{loc}^{(n-1)}((0,\infty)); \, 
 x^{j} g^{(j)} \in L^{2}((0,\infty)), \, j=1, \dots, n \big\}.    \no
\end{align}
\end{lemma}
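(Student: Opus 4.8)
The plan is to argue by induction on $n \in \bbN$, writing $\cD_n$ for the set on the right-hand side of \eqref{7.31} and abbreviating $P_j := x^j g^{(j)}$ for $0 \leq j \leq n$ (so $P_0 = g$). Since $T_1 \in \cB(L^2((0,\infty)))$ is injective, $T_1^{-n} = (T_1^{-1})^n$ is the inverse of the bounded injective operator $T_1^n$, with $\dom(T_1^{-n}) = \ran(T_1^n)$; moreover, exploiting the factorization $T_1^{-n} = T_1^{-(n-1)} \circ T_1^{-1}$, one has $g \in \dom(T_1^{-n})$ if and only if $g \in \dom(T_1^{-1})$ and $T_1^{-1} g \in \dom(T_1^{-(n-1)})$. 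For the base case $n=1$, Lemma \ref{l7.2} (taken with $n=1$) gives $(T_1^{-1}g)(x) = (xg(x))'$ with $\dom(T_1^{-1}) = \{g \in L^2((0,\infty)) \mid g \in AC_{loc}((0,\infty)),\, (xg)' \in L^2((0,\infty)),\, \lim_{x\downarrow 0} xg(x) = 0\}$. Since $g \in L^2((0,\infty))$, the condition $(xg)' = xg' + g \in L^2((0,\infty))$ is equivalent to $xg' = P_1 \in L^2((0,\infty))$, so the base case reduces to showing that the boundary term $\lim_{x\downarrow 0} xg(x)$ vanishes automatically once $g, P_1 \in L^2((0,\infty))$, which I treat in the last paragraph.

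For the inductive step, I would assume $\dom(T_1^{-(n-1)}) = \cD_{n-1}$ and set $h := T_1^{-1} g = (xg)'= xg' + g$, so that $g \in \dom(T_1^{-n})$ if and only if $g \in \dom(T_1^{-1})$ and $h \in \cD_{n-1}$. The engine of the translation is identity \eqref{7.29}: since $(xg)^{(k+1)} = h^{(k)}$, it reads $x^k h^{(k)} = x^{k+1} g^{(k+1)} + (k+1) x^k g^{(k)} = P_{k+1} + (k+1) P_k$ for $0 \leq k \leq n-1$. Thus the requirements $x^k h^{(k)} \in L^2((0,\infty))$, $1 \leq k \leq n-1$, encoded in $h \in \cD_{n-1}$, together with $P_0, P_1 \in L^2((0,\infty))$ coming from $g \in \dom(T_1^{-1})$, form a triangular system: solving $P_{k+1} = x^k h^{(k)} - (k+1) P_k$ successively for $k = 1, \dots, n-1$ yields $P_0, \dots, P_n \in L^2((0,\infty))$, which are exactly the $L^2$-conditions defining $\cD_n$. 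Running these equivalences in reverse gives the converse inclusion, so the $L^2$-part of the identity $\dom(T_1^{-n}) = \cD_n$ follows.

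The remaining points are smoothness and, most importantly, the boundary conditions. For smoothness I would use \eqref{7.26}: integrating $(xg)' = h$ gives $xg(x) = xg(x_0) + \int_{x_0}^x h(t)\,dt$, so the requirement $h \in AC^{(n-2)}_{loc}((0,\infty))$ (part of $h \in \cD_{n-1}$) is equivalent, after one integration and an application of \eqref{7.26}, to $g \in AC^{(n-1)}_{loc}((0,\infty))$, matching the smoothness demanded by $\cD_n$. The \emph{main obstacle} is the boundary condition $\lim_{x\downarrow 0} xg(x) = 0$ inherited from $\dom(T_1^{-1})$: it must be shown to be a consequence of $g, P_1 \in L^2((0,\infty))$ rather than an independent constraint, for this is precisely the assertion (announced in the paragraph preceding the lemma) that the $n$ boundary conditions of \eqref{7.14} may be traded for the $L^2$-conditions $x^j g^{(j)} \in L^2((0,\infty))$. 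To dispatch it, set $\phi(x) := xg(x)$; then $\phi' = P_1 + P_0 \in L^2((0,\infty))$, hence $\phi' \in L^1((0,1))$, so $\phi(0_+) := \lim_{x\downarrow 0}\phi(x)$ exists and is finite, and if it were nonzero then $|g(x)|^2 \sim |\phi(0_+)|^2 x^{-2}$ near $0$ would contradict $g \in L^2((0,\infty))$ — the same mechanism already used for \eqref{3.34} in the proof of Theorem \ref{t3.4}. Hence $\phi(0_+) = 0$, the boundary condition is automatic, and combining the three parts closes the induction.
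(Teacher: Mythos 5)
Your proposal is correct and follows essentially the same route as the paper: the same induction via the factorization $T_1^{-n} = T_1^{-(n-1)}\circ T_1^{-1}$, the same use of identities \eqref{7.26} and \eqref{7.29} to translate the smoothness and $L^2$-conditions on $(xg)'$ into conditions on $g$ (your ``triangular system'' is the paper's iterative step \eqref{7.37}), and the same mechanism for showing $\lim_{x\downarrow 0}xg(x)=0$ is automatic (existence of the limit from local integrability of $(xg)'$, then a contradiction with $g\in L^2((0,\infty))$ if the limit is nonzero, exactly the paper's \eqref{7.32}).
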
  
%%%%%%%
\begin{proof}
We start with the case $n=1$ and note that 
\begin{equation} 
g \in L^2((0,\infty)), \, g \in AC_{loc}((0,\infty)), \, \text{ and } \, x g' \in L^2((0,\infty)) 
\, \text{ implies } \,  \lim_{x \downarrow 0} x g(x) =0.   \lb{7.32}
\end{equation}
Indeed, observing
\begin{equation}
\int_{x_0}^x t g'(t) \, dt = [t g(t)]\big|_{x_0}^x - \int_{x_0}^x g(t) \, dt
\end{equation}
shows that $c = \lim_{x \downarrow 0} x g(x)$ exists. If $c \ne 0$, then without loss of generality we can 
assume that $c > 0$. Then $x g(x) > c/2$, equivalently, $g(x) > c/(2x)$ for sufficiently small $0 < x$, yielding $g \notin L^2((0,\infty))$, a contradiction. Thus, $c=0$ and \eqref{7.32} holds. 

Since $(x g)' = x g' + g$, this implies 
\begin{align}
\dom\big(T_{1}^{-1}\big) 
&=  \big\{g \in L^{2}((0,\infty)) \, \big| \, g \in AC_{loc}((0,\infty)); \, x g' \in L^{2}((0,\infty))\big\} 
\lb{7.34}
\end{align}
and hence verifies \eqref{7.31} for $n=1$.

Next, we use induction on $n \in \bbN$. Assume \eqref{7.31} holds for $n \in \bbN$ fixed. 
Then for $n+1$, one obtains 
\begin{align}
& \dom\big(T_{1}^{-(n+1)}\big) = \dom\big(T_{1}^{-n}T_{1}^{-1}\big)    \no \\
& \quad = \big\{g \in \dom\big(T_{1}^{-1}\big) \, \big| \, \big(T_1^{-1} g\big) \in \dom\big(T_{1}^{-n}\big) \big\}    \no \\
& \quad = \big\{g \in \dom\big(T_{1}^{-1}\big) \, \big| \, (x g)' \in \dom\big(T_{1}^{-n}\big) \big\}    \no \\
& \quad = \big\{g \in L^{2}((0,\infty)) \, \big| \, g \in AC_{loc}((0,\infty)); \, x g' \in L^{2}((0,\infty)); 
\, (x g)' \in L^{2}((0,\infty));  \no \\
& \qquad \qquad  (x g)' \in AC_{loc}^{(n-1)}((0,\infty)); \, 
x^{j}(x g)^{(j+1)} \in L^{2}((0,\infty)), \, j=1, \dots, n \big\}    \lb{7.34a} \\
\begin{split}
& \quad = \big\{g \in L^{2}((0,\infty)) \, \big| \, g \in AC_{loc}((0,\infty)); \, 
(x g)' \in AC_{loc}^{(n-1)}((0,\infty));   \lb{7.34b} \\
&\hspace*{4.4cm} x^{j}(x g)^{(j+1)} \in L^{2}((0,\infty)), \, j=0, \dots, n \big\}     
\end{split}  \\
& \quad = \big\{g \in L^{2}((0,\infty)) \, \big| \, g \in AC_{loc}^{(n)}((0,\infty)); \, 
x^{j} g^{(j)} \in L^{2}((0,\infty)), \, j=1, \dots, n+1 \big\},     \lb{7.35} 
\end{align}
as desired. Here we used the induction hypothesis in \eqref{7.34a}, and again the fact 
$(x g)' = x g' + g$ and 
$f \in L^2((0,\infty))$ to conclude that $(x g)' \in L^2((0,\infty))$ if and only if $x g' \in L^2((0,\infty))$ in 
\eqref{7.34b}. To arrive at \eqref{7.35} one uses \eqref{7.26} and hence, 
\begin{equation} \lb{7.36}
g \in AC_{loc}((0,\infty)) \text{ and } (x g)' \in AC_{loc}^{(n-1)}((0,\infty))
\, \text{ if and only if } \, g \in AC_{loc}^{(n)}((0,\infty)), 
\end{equation}
as well as \eqref{7.29} and 
\begin{equation} \lb{7.37}
x^{j}(x g)^{(j+1)} = x^{j+1} g^{(j+1)} + (j+1)x^{j} g^{(j)} \in L^{2}((0,\infty)), \quad j=0, \dots, n,
\end{equation}
which iteratively yields $x^{k} g^{(k)} \in L^{2}((0,\infty))$ for $1 \leq k \leq n+1$.
\end{proof}
%%%%%%%

%%%%%%%
\begin{lemma} \lb{l7.4}
Let $n \in \bbN$.
Assume $f \in AC^{(n-1)}_{loc}((0,\infty))$ and $x^{k}f^{(k)} \in L^{2}((0,\infty))$ for $k=0,1, \dots, n$.
Then
\begin{equation} \lb{7.38}
\lim_{x \downarrow 0} (x^{n}f(x))^{(j)} = 0, \quad j=0, 1, \dots, n-1. 
\end{equation} 
\end{lemma}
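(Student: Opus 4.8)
The plan is to exploit the explicit Leibniz-type expansion \eqref{7.27}. Writing $h_{j}(x) := (x^{n}f(x))^{(j)}$, it gives, for each $0 \le j \le n$, the factorization
\[
h_{j}(x) = x^{n-j}\, Q_{j}(x), \qquad Q_{j}(x) := \sum_{m=0}^{j} a_{m}(n,j)\, x^{m} f^{(m)}(x),
\]
so that, since $x^{m}f^{(m)} \in L^{2}((0,\infty))$ for $0 \le m \le n$ by hypothesis, every $Q_{j}$ lies in $L^{2}((0,\infty))$. This factorization isolates the vanishing factor $x^{n-j}$ (and one notes $n-j \ge 1$ throughout the relevant range $0 \le j \le n-1$) while collecting all derivative content into the single $L^{2}$ function $Q_{j}$.

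First I would show that each limit $L_{j} := \lim_{x\downarrow 0} h_{j}(x)$, $0 \le j \le n-1$, exists and is finite. For this one observes that $h_{j+1} = x^{n-j-1}Q_{j+1}$ with $n-j-1 \ge 0$, so on $(0,1)$ the bounded factor $x^{n-j-1}$ times $Q_{j+1} \in L^{2}((0,1)) \subset L^{1}((0,1))$ yields $h_{j+1} \in L^{1}((0,1))$. Since $f \in AC^{(n-1)}_{loc}((0,\infty))$ forces $h_{j} \in AC_{loc}((0,\infty))$ for $0 \le j \le n-1$, the fundamental theorem of calculus gives
\[
h_{j}(x) = h_{j}(c) - \int_{x}^{c} h_{j+1}(t)\,dt, \quad 0 < x < c,
\]
and integrability of $h_{j+1}$ near $0$ forces the right-hand side, hence $h_{j}(x)$, to converge as $x \downarrow 0$.

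Finally I would pin each limit to zero via the $L^{2}$ bound. Fix $j \in \{0,\dots,n-1\}$ and suppose, for contradiction, that $L_{j} \ne 0$. Then $|h_{j}(x)| \ge |L_{j}|/2$ on some interval $(0,\delta)$, whence $|Q_{j}(x)| = |h_{j}(x)|\, x^{-(n-j)} \ge (|L_{j}|/2)\, x^{-(n-j)}$ there, and since $2(n-j) \ge 2 > 1$,
\[
\int_{0}^{\delta} |Q_{j}(x)|^{2}\,dx \ge \frac{|L_{j}|^{2}}{4}\int_{0}^{\delta} x^{-2(n-j)}\,dx = \infty,
\]
contradicting $Q_{j} \in L^{2}((0,\infty))$. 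Hence $L_{j} = 0$, which is precisely \eqref{7.38}.

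The point requiring care is that $L^{2}$-membership of $Q_{j}$ alone does \emph{not} force the pointwise product $x^{n-j}Q_{j}$ to have a limit at $0$ (an $L^{2}$ function may carry tall narrow spikes near the origin). Existence of the limit must therefore be secured separately, from the $L^{1}$-integrability of the next higher derivative $h_{j+1}$; only after the limit is known to exist does the $L^{2}$ estimate succeed in forcing its value to vanish. Thus both ingredients are genuinely needed: $L^{1}$ for existence of $L_{j}$, and $L^{2}$ for the identification $L_{j}=0$.
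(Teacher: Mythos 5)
Your proof is correct, and while it rests on the same two ingredients as the paper's argument -- existence of the boundary limit via the fundamental theorem of calculus combined with local $L^{1}$-integrability of the next derivative, followed by identification of that limit as zero via an $L^{2}$-divergence contradiction -- it organizes them around a genuinely different decomposition. The paper never factors $(x^{n}f)^{(j)}$ as a whole: it applies the one-step fact \eqref{7.32} (established inside the proof of Lemma \ref{l7.3}) iteratively to the auxiliary functions $g_{j}=x^{j}f^{(j)}$, using \eqref{7.30} to verify $x g_{j}' \in L^{2}((0,\infty))$, thereby obtaining $\lim_{x\downarrow 0}x^{j+1}f^{(j)}(x)=0$ and hence the termwise statement \eqref{7.45} that $x^{m}f^{(j)}(x)\to 0$ for every $m>j$; only then does it assemble \eqref{7.38} term by term through the Leibniz expansion \eqref{7.27}. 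You instead use \eqref{7.27} once to extract the factorization $h_{j}=x^{n-j}Q_{j}$ and treat the entire sum $Q_{j}=\sum_{m=0}^{j}a_{m}(n,j)x^{m}f^{(m)}$ as a single $L^{2}$ object, running the existence/vanishing mechanism once per $j$ directly on $h_{j}$. What your route buys: no iteration over the derivatives, no need for each monomial $x^{n-j+m}f^{(m)}$ to possess an individual limit, and independence from the machinery inside Lemma \ref{l7.3}. What the paper's route buys: the intermediate conclusion \eqref{7.45} is strictly more refined than \eqref{7.38}, and the argument recycles \eqref{7.32}, which the paper has already paid for, so its incremental cost there is essentially nil. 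Your closing observation -- that $L^{2}$-membership of $Q_{j}$ alone cannot produce the limit, so existence and identification must be handled by separate mechanisms in that order -- is exactly right, and it is the same subtlety that makes \eqref{7.32} itself a two-stage argument in the paper.
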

%%%%%%%
\begin{proof}
The case $n=1$ holds by \eqref{7.32}.

For $n=2$, assume $f \in AC^{(1)}_{loc}((0,\infty))$ with $f, xf', x^{2}f'' \in L^{2}((0,\infty))$.
Then for $j=0$, one has again by \eqref{7.32},
\begin{equation} \lb{7.39}
\lim_{x \downarrow 0} xf(x) = 0 \, \text{ and hence } \, \lim_{x \downarrow 0} x^2 f(x) = 0.
\end{equation}
For $j=1$, one notes that $xf' \in AC_{loc}((0,\infty))$, see \eqref{7.26}, and 
$x(xf')' = x^{2}f'' + xf' \in L^{2}((0,\infty))$.
Hence, applying \eqref{7.32} to $g=xf'$ yields 
\begin{equation} \lb{7.40}
\lim_{x \downarrow 0} x(xf'(x)) = \lim_{x \downarrow 0} x^{2}f'(x) = 0.
\end{equation}
Combining \eqref{7.39} and \eqref{7.40} shows
\begin{equation}
\lim_{x \downarrow 0} (x^{2}f(x))' = \lim_{x \downarrow 0} \big[x^{2}f'(x) + 2xf(x)\big] = 0,
\end{equation}
proving \eqref{7.38} for $n=2$.

Next, we prove \eqref{7.38} for general $n \in \bbN$. Let $n \in \bbN$ be fixed and assume the hypotheses of the lemma, that is, 
\begin{equation} 
f \in AC^{(n-1)}_{loc}((0,\infty)) \, \text{ and } \, 
x^{k}f^{(k)} \in L^{2}((0,\infty)), \quad k=0,1, \dots, n.    \lb{7.42} 
\end{equation} 
One notes that $f \in AC^{(n-1)}_{loc}((0, \infty))$ implies $x^{j}f^{(j)} \in AC_{loc}((0,\infty))$ for $j=0, \dots, n-1$, see \eqref{7.26}.
Using \eqref{7.30} and \eqref{7.42}, 
\begin{equation}
x(x^{j}f^{(j)})' = x^{j+1}f^{(j+1)} + jx^{j}f^{(j)} \in L^{2}((0,\infty)),  \quad j=0, \dots, n-1.
\end{equation} 
Thus, applying \eqref{7.32} iteratively to $g_{j} := x^{j}f^{(j)}$ one arrives at  
\begin{equation}
\lim_{x \downarrow 0} xg_{j}(x) = \lim_{x \downarrow 0} x^{j+1}f^{(j)}(x) = 0, 
\quad j=0, 1, \dots, n-1.
\end{equation}
In particular, for any $m \in \bbN$ with $m>j$,
\begin{equation} \lb{7.45}
\lim_{x \downarrow 0} x^{m}f^{(j)}(x) = 0, \quad j=0, 1, \dots, n-1.
\end{equation}
Thus, by \eqref{7.27} and \eqref{7.45}, one obtains 
\begin{align}
\lim_{x \downarrow 0} (x^{n}f(x))^{(j)} &= a_{j} \lim_{x \downarrow 0} x^{n}f^{(j)}(x) + \dots + a_{0}\lim_{x \downarrow 0} x^{n-j}f(x)  \no \\
&= a_{j}\cdot 0 + \dots + a_{0} \cdot 0 =0,  \quad j = 0, \dots, n-1.
\end{align} 
\end{proof}
%%%%%%

Introducing 
\begin{equation}\label{e1.2}
p_{n}(z) = \prod_{k=0}^{n-1}(z+k), \quad z \in \bbC, \; n \in \bbN, 
\end{equation}
we are now ready to characterize $T_n^{-1}$ in terms of $T_1^{-1}$:

%%%%%%
\begin{theorem} \lb{t7.5}
Let $n \in \bbN$, then 
\begin{align} 
& T_{n}^{-1} = p_{n}\big(T_{1}^{-1}\big),   \lb{7.48} \\
& \dom\big(T_{n}^{-1}\big) = \dom\big(T_{1}^{-n}\big)   
=  \big\{g \in L^{2}((0,\infty)) \, \big| \, g \in AC_{loc}^{(n-1)}((0,\infty));   \lb{7.49} \\
& \hspace*{5.3cm} x^{j} g^{(j)} \in L^{2}((0,\infty)), \, j=1, \dots, n \big\}.    \no 
\end{align}
\end{theorem}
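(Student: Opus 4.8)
The plan is to prove Theorem \ref{t7.5} in two independent stages and then combine them: first an \emph{action} identity showing that $T_n^{-1}$ and $p_n(T_1^{-1})$ coincide as differential expressions, and second the \emph{domain} identity $\dom(T_n^{-1}) = \dom(T_1^{-n})$, which simultaneously pins down the domain of $p_n(T_1^{-1})$. Together these give the operator equality \eqref{7.48} together with the domain description \eqref{7.49}.

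For the action identity I would work with the Euler operator $\theta := x\,\frac{d}{dx}$. By \eqref{7.14} with $n=1$ one has $T_1^{-1}g = (xg)' = g + xg' = (\theta+1)g$, hence
\[
p_n(T_1^{-1}) = \prod_{k=0}^{n-1}(T_1^{-1}+k) = \prod_{k=0}^{n-1}(\theta + 1 + k) = \prod_{j=1}^{n}(\theta + j).
\]
It therefore suffices to prove the operator identity $\frac{d^n}{dx^n}\,x^n = \prod_{j=1}^{n}(\theta + j)$, since applying its left-hand side to $g$ yields $(x^n g)^{(n)} = T_n^{-1}g$ (cf.\ \eqref{7.14}). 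I would argue by induction on $n$, the base case being $\frac{d}{dx}x = \theta + 1$. The only algebraic input is the intertwining relation $\theta\, x^m = x^m(\theta + m)$ (a one-line computation); using it, $(\theta+1)x^{n-1} = x^{n-1}(\theta+n)$, so that
\[
\frac{d^n}{dx^n}x^n = \frac{d^{n-1}}{dx^{n-1}}\Big(\tfrac{d}{dx}x\Big)x^{n-1} = \frac{d^{n-1}}{dx^{n-1}}\,x^{n-1}(\theta+n) = \Big(\prod_{j=1}^{n-1}(\theta+j)\Big)(\theta+n) = \prod_{j=1}^{n}(\theta+j),
\]
the third equality by the induction hypothesis and the last because all factors are polynomials in $\theta$ and hence commute. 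Thus $p_n(T_1^{-1})$ and $T_n^{-1}$ act identically on every $g \in AC^{(n-1)}_{loc}((0,\infty))$, where all intermediate applications are well defined a.e.

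It remains to identify the domains. Since $p_n$ has degree $n$ with leading coefficient $1$, the elementary fact $\dom\big(\sum_{k=0}^n c_k A^k\big) = \bigcap_{k=0}^n \dom(A^k) = \dom(A^n)$ (the domains $\dom(A^k)$ being decreasingly nested) gives $\dom(p_n(T_1^{-1})) = \dom(T_1^{-n})$. Everything thus reduces to proving $\dom(T_n^{-1}) = \dom(T_1^{-n})$, i.e.\ to reconciling the descriptions in Lemmas \ref{l7.2} and \ref{l7.3}. For $\dom(T_1^{-n}) \subseteq \dom(T_n^{-1})$ I would use \eqref{7.27} with $k=n$, which writes $(x^n g)^{(n)} = \sum_{j=0}^n a_j(n,n)\, x^j g^{(j)}$ as a linear combination of the $L^2$-functions $x^j g^{(j)}$ $(j=0,\dots,n)$, so $(x^n g)^{(n)} \in L^2((0,\infty))$; the boundary conditions $\lim_{x\downarrow 0}(x^n g)^{(j)} = 0$, $j=0,\dots,n-1$, are then supplied exactly by Lemma \ref{l7.4}. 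For the reverse inclusion, given $g \in \dom(T_n^{-1}) = \ran(T_n)$ I would write $g = T_n h$ with $h := T_n^{-1}g \in L^2((0,\infty))$ and set $F := x^n g$, so that $F \in H_n([0,\infty))$ with $F^{(n)} = h$; Theorem \ref{t3.2}\,$(i)$ then gives $F^{(i)}/x^{n-i} \in L^2((0,\infty))$ for $i=0,\dots,n$. Expanding $x^j g^{(j)} = x^j (x^{-n}F)^{(j)}$ by the Leibniz rule exhibits it as a finite linear combination of the functions $F^{(i)}/x^{n-i}$ with $0 \le i \le j \le n$, whence $x^j g^{(j)} \in L^2((0,\infty))$ for $j=1,\dots,n$; as $g \in AC^{(n-1)}_{loc}((0,\infty))$ is already part of the hypothesis, $g \in \dom(T_1^{-n})$.

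The main obstacle is precisely this reconciliation of two a priori very different domain descriptions: the inverse $T_n^{-1}$ arrives (through Lemma \ref{l7.2}) carrying $n$ vanishing boundary conditions at the origin, whereas $\dom(T_1^{-n})$ is described purely through the weighted $L^2$-membership conditions $x^j g^{(j)} \in L^2((0,\infty))$. The forward inclusion trades the boundary conditions for $L^2$-conditions by invoking Lemma \ref{l7.4}, while the reverse inclusion is where the genuine analytic content of the space $H_n([0,\infty))$ (namely Theorem \ref{t3.2}\,$(i)$) is essential. Once both inclusions are established, $\dom(T_n^{-1}) = \dom(T_1^{-n}) = \dom(p_n(T_1^{-1}))$, and since the two operators act identically on this common domain, the identity $T_n^{-1} = p_n(T_1^{-1})$ together with \eqref{7.49} follows at once.
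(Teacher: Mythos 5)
Your proof is correct, and it differs from the paper's in two instructive ways. For the operator identity \eqref{7.48} the paper runs an induction directly on the operators, computing
$p_{n+1}\big(T_1^{-1}\big)f = T_n^{-1}\big(T_1^{-1}+n\big)f = \big(x^n(xf'+f)\big)^{(n)} + n\,(x^nf)^{(n)} = \big(x^{n+1}f\big)^{(n+1)}$
on $\dom\big(T_1^{-n-1}\big)$; your factorization through the Euler operator $\theta = x\,d/dx$ and the intertwining relation $\theta\, x^m = x^m(\theta+m)$ is essentially the same algebra, but packaged as a pointwise identity of differential expressions valid on all of $AC^{(n-1)}_{loc}((0,\infty))$, which cleanly decouples the algebraic content from the domain questions (the paper leaves the convention $\dom\big(p_n\big(T_1^{-1}\big)\big)=\dom\big(T_1^{-n}\big)$ implicit, whereas you state the nestedness argument explicitly). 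The genuine divergence is in the inclusion $\dom\big(T_n^{-1}\big) \subseteq \dom\big(T_1^{-n}\big)$: the paper differentiates the integral representation $x^n f = \int_0^x\!\cdots\!\int h$ exactly $k$ times, divides by $x^{n-k}$, and identifies the result as $T_{n-k}\,h \in L^{2}((0,\infty))$, thereby invoking the boundedness of the operators $T_{n-k}$ (Theorem \ref{t7.1}, hence ultimately the Birman inequalities), iterating over $k=0,\dots,n$; you instead set $F = x^n g \in H_n([0,\infty))$ and quote Theorem \ref{t3.2}\,$(i)$ together with the Leibniz expansion of $x^j\big(x^{-n}F\big)^{(j)}$ into constant multiples of the functions $F^{(i)}/x^{n-i}$. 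Both mechanisms rest on the same underlying weighted estimates, but yours bypasses Theorem \ref{t7.1} entirely and is arguably the more direct route given what Section \ref{s3} already provides. The remaining pieces — the reverse inclusion via \eqref{7.27} and Lemma \ref{l7.4}, and the use of Lemmas \ref{l7.2} and \ref{l7.3} to identify the two domains — coincide with the paper's argument.
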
 
%%%%%%
\begin{proof}
Focusing at first on \eqref{7.49} suppose $f \in \dom\big(T_{n}^{-1}\big) = \ran(T_{n})$. 
Then, for some $h \in L^{2}((0,\infty))$,  
\begin{equation} \lb{7.50}
f(x) =  \frac{1}{x^n} \int_{0}^{x} \int_{0}^{t_1} \cdots \int_{0}^{t_{n-1}} h(u) du dt_{n-1} \dots dt_1,
\end{equation}
equivalently,
\begin{equation} \lb{7.51}
x^n f(x) =  \int_{0}^{x} \int_{0}^{t_1} \cdots \int_{0}^{t_{n-1}} h(u) du dt_{n-1} \dots dt_1.
\end{equation}
Taking the $k$-th derivative, $0 \leq k < n$, of \eqref{7.51} yields 
\begin{align} \lb{7.52}
& a_{k}x^{n}f^{(k)}(x) + \dots + a_{0}x^{n-k}f(x)
= \int_{0}^{x} \int_{0}^{t_1} \cdots \int_{0}^{t_{n-k-1}} h(u) du dt_{n-k-1} \dots dt_1,    \no \\
& \hspace*{8.5cm} k = 0, \dots, n-1,
\end{align}
where the coefficients $a_{j}(n,k)$ are given by \eqref{7.27}. 
Dividing \eqref{7.52} by $x^{n-k}$ yields
\begin{align} \lb{7.53}
a_{k}x^{k}f^{(k)} + \dots + a_{1}xf' + a_{0}f
&= T_{n-k} \, h \in L^{2}((0,\infty))
\end{align}
which iteratively proves that $x^{k}f^{(k)} \in L^{2}((0,\infty))$ for $k=0,\dots, n-1$.
Finally, taking the $n$-th derivative of \eqref{7.51} shows
\begin{align} \lb{7.54}
a_{n}x^{n}f^{(n)} + \dots + a_{1}xf' + a_{0}f = h \in L^{2}((0,\infty))
\end{align}
proving $x^{n}f^{(n)} \in L^{2}((0,\infty))$ and thus $f \in \dom\big(T_{1}^{-n}\big)$.

Conversely, suppose $f \in \dom\big(T_{1}^{-n}\big)$. Then $(x^{n}f)^{(n)} \in L^{2}((0,\infty))$ 
since $x^{j}f^{(j)} \in L^{2}((0,\infty))$ for $j=0,\dots, n$ by 
hypothesis, and
\begin{equation}
 (x^{n}f)^{(n)} =a_{n}x^{n}f^{(n)} + \dots + a_{1}xf' + a_{0}f.
\end{equation}
The condition $\lim_{x \downarrow 0}(x^{n}f(x))^{(j)}= 0$, $j=0,\dots, n-1$, follows from 
Lemma \ref{l7.4}.

Turning to the proof of \eqref{7.48}, one notes that the case $n=1$ 
in \eqref{7.48} obviously holds. Hence, assume \eqref{7.48} holds for some $n \in \bbN$ fixed. Then for $n+1$ one computes,
\begin{align}
p_{n+1}(T_1^{-1})f &= \prod_{k=0}^{n}(T_{1}^{-1}+k)f  
= T_{n}^{-1}(T_{1}^{-1} + n)f \text{ by hypothesis } \no \\
&=  T_{n}^{-1}T_{1}^{-1}f+ nT_{n}^{-1}f  \no \\
&= (x^{n}(xf'+f))^{(n)} + n(x^{n}f)^{(n)}  \no \\
&= (x^{n+1}f')^{(n)}+(x^{n}f)^{(n)}+ n(x^{n}f)^{(n)}  \no \\
&= (x^{n+1}f')^{(n)} + ((n+1)x^{n}f)^{(n)}  \no \\
&= ( x^{n+1}f' + (n+1)x^{n}f )^{(n)} \no \\
&= ((x^{n+1}f)')^{(n)} \no \\
&= (x^{n+1}f)^{(n+1)} \no \\
&= T_{n+1}^{-1}f, \quad f \in \dom\big(T_{n+1}^{-1}\big) = \dom\big(T_{1}^{-n-1}\big). 
\end{align}
\end{proof}
%%%%%%%

Given Theorem \ref{t7.5}, spectral analysis of $T_n$ reduces to that of $T_1$, respectively, 
$T_1^{-1}$, via the spectral mapping theorem. Thus, by \eqref{7.34}, we recall that 
\begin{align}
\begin{split} 
& (T_1^{-1} f)(x) = (x f)'(x) = x f'(x) + f(x), \\
& \, f \in \dom\big(T_1^{-1}\big) = \big\{g \in L^2((0,\infty)) \, \big| \, g \in AC_{loc}((0,\infty); 
(x g)' \in L^2((0,\infty))\big\}.
\end{split} 
\end{align}

Next, we introduce the unitary Mellin transform $\cM$ and its inverse, $\cM^{-1}$, via the pair of formulas
\begin{align}
& \cM \colon \begin{cases} L^2((0,\infty); dx) \to L^2(\bbR; d\lambda), \\[1mm] 
f \mapsto (\cM f)(\lambda) \equiv f^*(\lambda) := (2 \pi)^{-1/2} 
\slim_{a \uparrow \infty} \int_{1/a}^a f(x) x^{- (1/2) + i \lambda} dx \\ 
\hspace*{8.2cm} \text{for a.e.~$\lambda \in \bbR$,}  
\end{cases}   \\
& \cM^{-1} \colon \begin{cases} L^2(\bbR; d\lambda) \to L^2((0,\infty); dx), \\[1mm] 
f^* \mapsto (\cM^{-1} f^*)(x) \equiv f(x) := (2 \pi)^{-1/2} \slim_{b \uparrow \infty} \int_{- b}^b f^*(\lambda) 
x^{- (1/2) - i \lambda} d\lambda \\ 
\hspace*{7.15cm} \text{for a.e.~$x \in (0,\infty)$.} 
\end{cases} 
\end{align}
For details on $\cM$ (resp., $\cM^{-1}$) we refer, for instance, to \cite[Sect.~3.17]{Ti86} 
(see also, \cite[Sect.~1.3]{Ya96}). 

The fact,
\begin{equation}
i \bigg(\f{d}{dx} x - \f{1}{2}\bigg) x^{- (1/2) - i \lambda} = \lambda x^{- (1/2) - i \lambda}, 
\quad x \in (0, \infty), \; \lambda \in \bbR,
\end{equation}
naturally leads to the following definition of the operator $S_1$ in $L^2((0,\infty); dx)$,
\begin{equation}
S_1 := i \big(T_1^{-1} - 2^{-1} I_{L^2((0,\infty))}\big), \quad \dom(S_1) = \dom\big(T_1^{-1}\big),     \lb{B.6} 
\end{equation}
and establishes that $S_1$ is unitarily equivalent to the operator of multiplication by the 
independent variable in $L^2(\bbR)$,
\begin{align}
\begin{split} 
& \big(\cM S_1 \cM^{-1} f^*\big)(\lambda) = \lambda f^*(\lambda) \, \text{ for a.e.~$\lambda \in \bbR$ and}     \\
& \quad \text{for all $f^* \in L^2(\bbR; d\lambda)$ such that $\lambda f^* \in L^2(\bbR; d\lambda)$.} 
\end{split} 
\end{align}

Summarizing, the Mellin transform diagonalizes $S_1$ and hence $T_1$. Denoting by 
$C(z_0; r_0) \subset \bbC$ the circle of radius $r_0 > 0$ centered at $z_0 \in \bbC$, one obtains 
the following result. 

%%%%%%%%
\begin{theorem} \lb{t7.6} 
Introduce $S_1$ in $L^2((0,\infty);dx)$ as in \eqref{B.6}. Then $S_1$ is self-adjoint and hence 
$T_1$ is normal. Moreover, the spectra of $S_1$ and $T_1$ are simple and purely absolutely continuous. In particular,
\begin{align}
& \sigma(S_1) = \sigma_{ac}(S_1) = \bbR, \quad \sigma_p(S_1) = \sigma_{sc}(S_1) = \emptyset, \\ 
&\sigma(T_1) = \sigma_{ac}(T_1) = C(1;1), \quad \sigma_p(T_1) = \sigma_{sc}(T_1) = \emptyset. 
\end{align}
\end{theorem}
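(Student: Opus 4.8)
The plan is to exploit the unitary equivalence, established just above via the Mellin transform $\cM$, between $S_1$ and the operator $M_\lambda$ of multiplication by the independent variable $\lambda$ on $L^2(\bbR; d\lambda)$. First I would record that $\cM$ maps $\dom(S_1)$ onto the maximal domain $\{f^* \in L^2(\bbR) \mid \lambda f^* \in L^2(\bbR)\}$ of $M_\lambda$ and intertwines $S_1$ with $M_\lambda$, so that $S_1 = \cM^{-1} M_\lambda \cM$. Since $M_\lambda$ is the prototypical self-adjoint operator with simple, purely absolutely continuous spectrum equal to $\bbR$ (no eigenvalues, no singular continuous part), and since all of these properties --- self-adjointness, the spectrum together with its absolutely/singularly continuous and point components, and the spectral multiplicity --- are preserved under unitary equivalence, the assertions $\sigma(S_1) = \sigma_{ac}(S_1) = \bbR$ and $\sigma_p(S_1) = \sigma_{sc}(S_1) = \emptyset$, together with simplicity, follow at once.

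Next I would pass from $S_1$ to $T_1$ through the functional calculus. Solving \eqref{B.6} for $T_1^{-1}$ gives $T_1^{-1} = 2^{-1} I_{L^2((0,\infty))} - i S_1$, hence $T_1 = \phi(S_1)$ with $\phi(\lambda) := (2^{-1} - i \lambda)^{-1}$. One checks that $\phi$ is bounded ($|\phi(\lambda)| = (1/4 + \lambda^2)^{-1/2} \leq 2$), continuous, and injective on $\bbR$ with nonvanishing derivative, i.e., a smooth diffeomorphism of $\bbR$ onto $\phi(\bbR)$. Consequently $T_1 = \phi(S_1)$ is a bounded normal operator, and the spectral mapping theorem for the (bounded Borel) functional calculus of the self-adjoint operator $S_1$ yields $\sigma(T_1) = \overline{\phi(\sigma(S_1))} = \overline{\phi(\bbR)}$.

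It then remains to identify $\overline{\phi(\bbR)}$ and to transfer the spectral type. For the former I would use the classical fact that the inversion $z \mapsto 1/z$ carries the vertical line $\{z \in \bbC \mid \Re(z) = 1/2\}$ (which is precisely $2^{-1} - i\,\bbR$, the spectrum of $T_1^{-1}$) onto the circle through $0$ and $2$ that is symmetric about the real axis, namely $C(1;1)$; concretely $\phi(\bbR) = C(1;1) \setminus \{0\}$ and $\overline{\phi(\bbR)} = C(1;1)$. For the spectral type, since $\phi$ is a diffeomorphism the pushforward of an absolutely continuous scalar spectral measure remains absolutely continuous and the multiplicity is unchanged, so $T_1$ inherits simple, purely absolutely continuous spectrum; moreover any eigenvalue $z_0 \neq 0$ of $T_1$ would, by injectivity of $\phi$, force an eigenvalue of $S_1$, which has none. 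The one delicate point --- and the main obstacle --- is the boundary value $z_0 = 0 \in C(1;1)$, which is \emph{not} attained by $\phi$ on $\bbR$ yet lies in $\sigma(T_1)$ as a limit point: here I would invoke that $T_1$ is one-to-one (so $0 \notin \sigma_p(T_1)$) and that the spectrum has already been shown to be purely absolutely continuous, forcing $0 \in \sigma_{ac}(T_1)$. This yields $\sigma(T_1) = \sigma_{ac}(T_1) = C(1;1)$ and $\sigma_p(T_1) = \sigma_{sc}(T_1) = \emptyset$, completing the plan.
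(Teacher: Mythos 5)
Your proof is correct and follows essentially the same route as the paper: diagonalize $S_1$ via the Mellin transform, then pass to $T_1$ through the map $\lambda \mapsto \big(2^{-1} - i\lambda\big)^{-1}$ and its inverse $z \mapsto i\big(z^{-1} - 2^{-1}\big)$, which is precisely the pair of maps the paper's proof invokes. Your write-up merely supplies details the paper leaves implicit --- the spectral mapping theorem, the computation showing $\overline{\phi(\bbR)} = C(1;1)$, and the treatment of the boundary point $0 \notin \phi(\bbR)$ --- all of which are handled correctly.
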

%%%%%%%%
\begin{proof}
The claims concerning $S_1$ follow from \eqref{B.6}, unitarity of $\cM$, and the fact that the operator 
of multiplication by the independent variable in $L^2(\bbR; d\lambda)$ has purely absolutely continuous spectrum. The pair of maps 
\begin{equation}
C(1;1) \ni z \mapsto i \big(z^{-1} - 2^{-1}\big) \in \bbR, \quad \bbR \ni \lambda \mapsto 
\big[- i \lambda + 2^{-1}\big]^{-1} \in C(1;1),  
\end{equation}
establishes the facts concerning $T_1$. 
\end{proof}
%%%%%%%%

%%%%%%%%
\begin{remark} \lb{r7.7}
Introducing the family of operators 
\begin{equation}
(T_{1,z} f)(x) := \int_0^1 s^{-z} f(st) \, dt = x^{z-1} \int_0^x u^{-z} f(u) \, du, 
\quad \Re(z) < 1/2, 
\end{equation}
one verifies that ($\Re(z) < 1/2$) 
\begin{align}
& T_{1,0} = T_1, \\ 
& \bigg(x^z \f{d}{dx} x^{1-z} T_{1,z} f\bigg)(x) = f(x), \\
& x^z \f{d}{dx} x^{1-z} = x \f{d}{dx} + I - z I= \f{d}{dx} x - z I, \\
& T_{1,z} = \bigg(\f{d}{dx} x - z I\bigg)^{-1} = \big(T_1^{-1} - z I\big)^{-1} = (I - z T_1)^{-1} T_1,  \no \\
& \hspace*{6mm} = - z^{-1} I + z^{-2} \big(z^{-1} I - T_1\big)^{-1}, \\
& (T_1 - z I)^{-1} = - z^{-1} I - z^{-2} T_{1, z^{-1}}. 
\end{align}
\hfill $\diamond$ 
\end{remark}
%%%%%%%%

The fact, $ \sigma(T_1) = C(1;1)$, as well as the resolvent formula \eqref{7.71} for $T_1$ are well-known, we refer, for instance, to \cite{BHS65}, and \cite{Bo68} (see also \cite{ABR15}, \cite{GL-S09}, 
\cite{LL-SPZ15}, \cite{Le73}, and the references cited therein). What appears to be less well-known is the a.c. nature of the spectrum of $T_1$ and the spectral representation in terms of the Mellin 
transform. Much of the work on the spectral theory for $T_1$ focused on $p$-dependence of the 
spectrum in $L^p$-spaces (on finite intervals and on the half-line), Hardy, Bergman, and Dirichlet  spaces, etc. For related classes of integral operators see, for instance, \cite{Me98}, \cite{NS94},  
\cite{Pr00}, and the references cited therein.

%%%%%%%%
\begin{remark} \lb{r7.8}
One notes the curious fact that while the closed, symmetric operator $A_1$ in $L^2((0,\infty))$, defined by 
\begin{align}
& (A_1 f)(x) = i f'(x),  \quad  f \in \dom(A_1) = \big\{g \in L^2((0,\infty)) \, \big | \, 
g \in AC_{loc}([0,\infty));     \no \\ 
& \hspace*{6cm}  g(0_+) = 0; \, g' \in L^2((0,\infty))\big\}, 
\end{align}
is the prime example of a symmetric operator with unequal deficiency indices ($1$ and $0$), and hence has no self-adjoint extensions in $L^2((0,\infty))$, the right multiplication of $i d/dx$ with $x$ in \eqref{B.6}, followed by the shift 
$- i/2$, yields the self-adjoint operator $S_1$ in $L^2((0,\infty))$. 
\hfill $\diamond$ 
\end{remark}
%%%%%%%%

Formula \eqref{7.48}, $T_n^{-1} = p_n\big(T_1^{-1}\big)$, $n \in \bbN$, together with the spectral theorem applied to $T_1$, then yield 
\begin{equation}
\sigma\big(T_n^{-1}\big) = p_n\big(\sigma\big(T_1^{-1}\big)\big), \quad n \in \bbN.
\end{equation}

Equivalently, introducing the rational function $r_n$ by 
\begin{equation}
r_n (z) = z^n \prod_{k = 1}^{n-1} (1 + kz)^{-1}, \quad 
z \in \bbC \backslash \big\{- \ell^{-1}\big\}_{1 \leq \ell \leq n-1}, \; n \in \bbN, 
\end{equation} 
the formula
\begin{equation}
T_n = r_n (T_1), \quad n \in \bbN,    \lb{7.69} 
\end{equation}
yields the following facts.

%%%%%%%
\begin{theorem} \lb{t7.9} 
Let $n \in \bbN$. Then, $T_n$ is normal and 
\begin{align}
& \sigma(T_n) = r_n(\sigma(T_1)) = \big\{r_n\big(1+e^{i \theta}\big) \, \big| \, \theta \in [0, 2 \pi]\big\}, 
\quad n \in \bbN,    \lb{7.70} \\
& \sigma(T_n) = \sigma_{ac}(T_n), \quad \sigma_p(T_n) = \sigma_{sc}(T_n) = \emptyset. 
\lb{7.71}
\end{align}
\end{theorem}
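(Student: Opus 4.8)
The plan is to reduce the entire spectral analysis of $T_n$ to that of the self-adjoint operator $S_1$, exploiting the factorization $T_n = r_n(T_1)$ from \eqref{7.69} together with the relation $T_1^{-1} = 2^{-1} I_{L^2((0,\infty))} - i S_1$ recorded in \eqref{B.6}. First I would use that, under the unitary Mellin transform $\cM$, the operator $S_1$ becomes multiplication by the independent variable $\lambda$ on $L^2(\bbR; d\lambda)$ (as established just before Theorem \ref{t7.6}). Hence $T_1 = \big(2^{-1} I - i S_1\big)^{-1}$ is unitarily equivalent to multiplication by $h(\lambda) := (2^{-1} - i \lambda)^{-1}$, and therefore $T_n = r_n(T_1)$ is unitarily equivalent to the operator $M_g$ of multiplication by $g := r_n \circ h$, that is,
\[
\big(\cM T_n \cM^{-1} f^*\big)(\lambda) = g(\lambda) f^*(\lambda), \quad
g(\lambda) = r_n\big((2^{-1} - i \lambda)^{-1}\big), \ \lambda \in \bbR.
\]
Since $T_n$ is bounded by Theorem \ref{t7.1}, one has $g \in L^\infty(\bbR)$, and multiplication by a bounded measurable function is normal; this establishes normality of $T_n$.

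For the spectrum I would invoke the spectral mapping theorem for the continuous functional calculus of the normal operator $T_1$. The rational function $r_n$ is continuous on the compact set $\sigma(T_1) = C(1;1)$ — note that $r_n(0)=0$ since $n \geq 1$, so there is no difficulty at the point $0 \in C(1;1)$ where $h$ accumulates — whence $\sigma(T_n) = r_n(\sigma(T_1)) = r_n(C(1;1))$. Parametrizing $C(1;1)$ by $1 + e^{i\theta}$, $\theta \in [0, 2\pi]$, and using that a continuous image of a compact set is compact (hence closed), yields exactly \eqref{7.70}.

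It remains to identify the spectral type, which I would argue directly on $M_g$. The key point is that $g$ is a nonconstant real-analytic function of $\lambda \in \bbR$ (indeed a rational function, being $r_n$ composed with the Möbius map $h$, with no poles on $\bbR$ because $g$ is bounded). For any $\mu \in \bbC$ the level set $\{\lambda \in \bbR : g(\lambda) = \mu\}$ is the zero set of a nonconstant real-analytic function, hence discrete and in particular Lebesgue-null; thus $M_g$ has no eigenvalues and $\sigma_p(T_n) = \emptyset$. For the absolutely/singularly continuous dichotomy I would note that the scalar spectral measures of $M_g$ are the push-forwards $g_*\big(|f^*|^2 \, d\lambda\big)$, supported on the curve $\sigma(T_n)$. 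Removing the finite critical set $\{g' = 0\}$ (a Lebesgue-null set), $\bbR$ decomposes into intervals on each of which $g$ is a real-analytic diffeomorphism onto an arc, so the change-of-variables formula (Jacobian $|g'| \neq 0$) shows each piece pushes forward to a measure absolutely continuous with respect to arc-length measure on $\sigma(T_n)$; a sum of such measures is again absolutely continuous. This gives $\sigma_{ac}(T_n) = \sigma(T_n)$ and $\sigma_{sc}(T_n) = \emptyset$, completing \eqref{7.71}.

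The main obstacle I anticipate is the careful treatment of the spectral-type decomposition of a normal operator whose spectrum is a (possibly self-intersecting) curve in $\bbC$: one must fix the reference measure — arc-length measure on $\sigma(T_n)$ — with respect to which absolute continuity is measured, and then verify that the real-analyticity of $g$, through the discreteness of its level sets and the finiteness of its critical set, is precisely what excludes both point and singularly continuous components. All remaining steps (normality, the spectral mapping, and the explicit parametrization) are routine consequences of the diagonalization of $T_1$ furnished by Theorem \ref{t7.6}.
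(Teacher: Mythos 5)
Your proposal is correct and follows essentially the same route as the paper: the paper's proof of Theorem \ref{t7.9} likewise combines the factorization $T_n = r_n(T_1)$ of \eqref{7.69} with the Mellin diagonalization of $T_1$ from Theorem \ref{t7.6} and an appeal to the spectral theorem for normal operators. The only difference is one of detail: what the paper compresses into that one-line appeal you carry out explicitly --- the multiplication-operator representation $g = r_n \circ h$, the exclusion of eigenvalues via discreteness of level sets of a nonconstant real-analytic function, and the absolute continuity (with respect to arc length) of the push-forward spectral measures --- which is a useful supplement, since the paper never fixes the reference measure for the a.c./s.c.\ decomposition of a normal operator whose spectrum is a curve.
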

%%%%%%%
\begin{proof}
Normality of $T_n$ is clear from \eqref{7.69}. 
The facts \eqref{7.70}, \eqref{7.71} follow from combining Theorem \ref{t7.6}, \eqref{7.69}, and the 
spectral theorem for normal operators. 
\end{proof}
%%%%%%

We have not been able to find discussions of $T_n$, $n \geq 2$, in the literature.

The spectrum of $T_n$, for various values of $n \in \bbN$, is illustrated next:

\setcounter{figure}{0}

\begin{figure}[H]
\includegraphics[scale=.92, center]{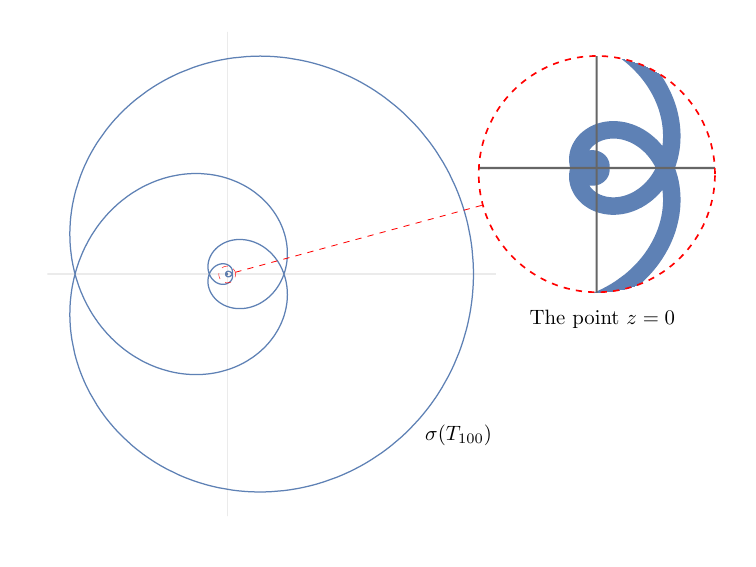}
\caption{$14\times$ Magnification of $\sigma(T_{100})$}
\end{figure}

\begin{figure}[H]
\includegraphics[scale=1, center]{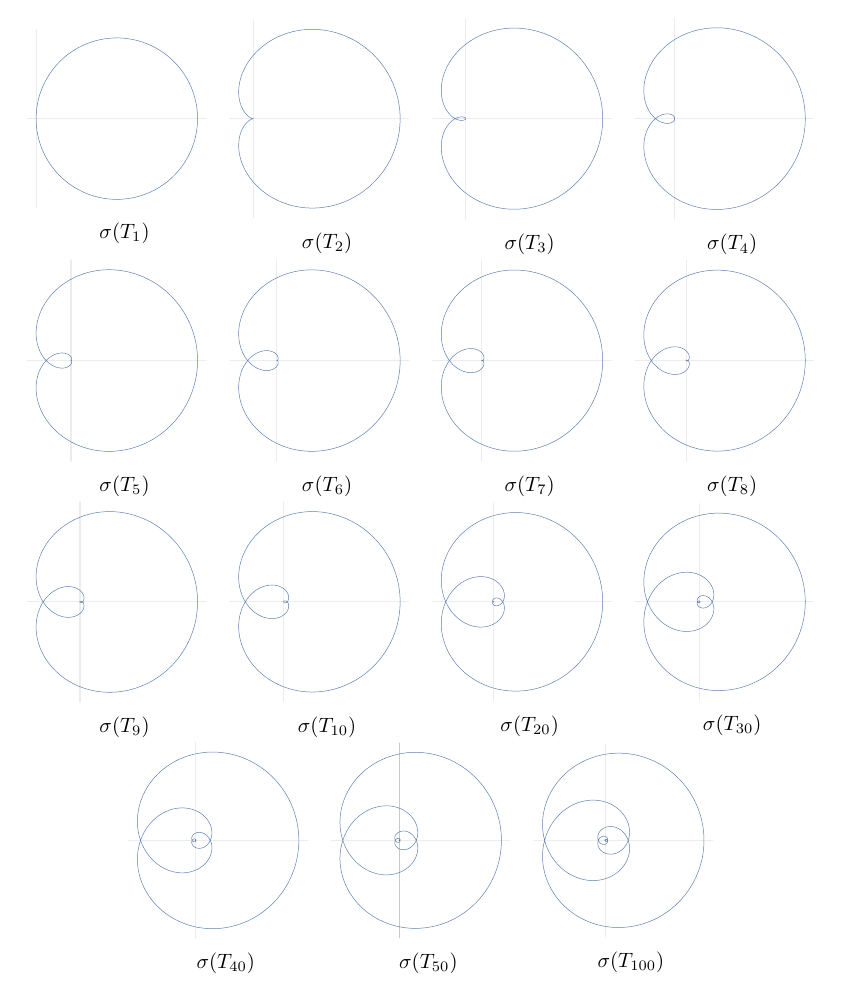}
\caption{The Spectrum of $T_n$ for certain $n \in \bbN$}
\end{figure}

%%%%%%%%%%%%%%%%%%%%%%%%%%%%%%%
%%%%%%%%%%%%%%%%%%%%%%%%%%%%%%%
\section{The Birman Inequalities on the Finite Interval $[0,b]$} 
\lb{s8}
%%%%%%%%%%%%%%%%%%%%%%%%%%%%%%%
%%%%%%%%%%%%%%%%%%%%%%%%%%%%%%%

For fixed $b\in (0,\infty)$ and $n$ $\in\bbN,$ introduce the set\footnote{It is possible to replace the 
boundary conditions at $x=0$ by $f/x^n \in L^2$ and/or the one at $x=b$ by $f/(b-x)^n \in L^2$, leading to additional spaces $H_n(0,b])'$, $H_n([0,b))'$, and $H_n((0,b))'$ in analogy to \eqref{3.0a}, but we omit further details at this point.} 
\begin{align}
\begin{split} 
H_{n}([0,b]):= \big\{f:[0,b]\rightarrow\bbC \,\big| \,  f^{(n)}\in L^{2}((0,b)); \, f^{(j)}\in AC([0,b]); &\\
f ^{(j)}(0)=f^{(j)}(b)=0, \, j=0,1,\dots,n-1&\big\},
\end{split} 
\end{align}
with associated inner product $(\, \cdot \, ,\, \cdot \,)_{H_{n}([0,b])}$,
\begin{equation}
(f,g)_{H_{n}([0,b])}:=\int_{0}^{b} \ol{f^{(n)}(x)} \, g^{(n)}(x) \, dx, \quad f,g\in
H_{n}([0,b]),     \lb{8.2} 
\end{equation}
and norm
\begin{equation}
\left\Vert f\right\Vert _{H_{n}([0,b])}=\big\Vert f^{(n)}\big\Vert _{L^2((0,b))},   \quad 
f\in H_{n}([0,b]).     \lb{8.3} 
\end{equation}

Next, we state the following result which yields the Birman inequalities on
$[0,b]$ as well as sharpness and equality results on 
$H_{n}([0,b])= H^n_0((0,b))$,
where $H^n_0((0,b))$ denotes the standard Sobolev space on $(0,b)$ obtained 
upon completion of $C_0^{\infty}((0,b))$ in the norm of $H^n((0,b))$, that is, 
\begin{align}
& H^n((0,b)) = \big\{f: [0,b] \to \bbC \, \big| \, f^{(j)} \in AC([0,b]), \, j =0,1, \dots, n-1;  \no \\
& \hspace*{5.15cm}  f^{(k)} \in L^2((0,b)), \, k = 0,1,\dots,n \big\},   \\
& H^n_0((0,b)) = \big\{f \in H^n((0,b)) \, \big| \,  f^{(j)}(0) =  f^{(j)}(b) = 0, \, j =0,1, \dots, n-1\big\}. 
\end{align}

%%%%%%
\begin{theorem} \lb{t8.2}
Let $n\in\bbN$, then the following items $(i)$--$(iv)$ hold: \\[1mm] 
$(i)$ For each $n \in \bbN$, and $b \in (0, \infty)$, 
\begin{equation} 
H_{n}([0,b]) = H^n_0((0,b))   \lb{8.7}
\end{equation} 
as sets. In particular, 
\begin{equation}
f\in H_{n}([0,b]) \, \text{ implies } \, f^{(j)}\in L^{2}((0,b)), \quad j=0,1,\dots,n.   \lb{8.8} 
\end{equation} 
In addition, the norms in $H_{n}([0,b])$ and $H^n_0((0,b))$ are equivalent. 
\\[1mm] 
$(ii)$ The following hold: \\[1mm] 
$(\alpha)$ Let $a, c \in [0,\infty)$, $a < c$, $f \colon [a,c] \to \bbC$, with $f^{(j)} \in AC([a,c])$, $f ^{(j)}(a)=0$, $j=0,1,\dots,n-1$, and 
$f^{(n)}\in L^{2}((a,c))$. Then,
\begin{equation}
\int_{a}^{c}\big|f^{(n)}(x)\big|^{2} \, dx\geq\dfrac{[(2n-1)!!]^{2}
}{2^{2n}}\int_{a}^{c} \dfrac{|f(x)|^2}{(x-a)^{2n}} \, dx.   \lb{8.8a}
\end{equation}
$(\beta)$ Let $a, c \in [0,\infty)$, $a < c$, $f:[a,c] \rightarrow \bbC$, with $f^{(j)} \in AC([a,c])$, $f ^{(j)}(c)=0$, $j=0,1,\dots,n-1$, 
and $f^{(n)}\in L^{2}((a,c))$. Then,
\begin{equation}
\int_{a}^{c}\big|f^{(n)}(x)\big|^{2} \, dx\geq\dfrac{[(2n-1)!!]^{2}
}{2^{2n}}\int_{a}^{c} \dfrac{|f(x)|^2}{(c-x)^{2n}} \, dx.  \lb{8.8b}
\end{equation}
$(\gamma)$ Introducing the distance of $x \in (0,b)$ to the boundary $\{0,b\}$ of $(0,b)$ by 
\begin{equation}
d(x) = \min \{x, |b-x|\}, \quad x \in (0,b), \; b \in (0, \infty), 
\end{equation}
one has 
\begin{equation}
\int_{0}^{b}\big|f^{(n)}(x)\big|^{2} \, dx\geq\dfrac{[(2n-1)!!]^{2}
}{2^{2n}}\int_{0}^{b} \dfrac{|f(x)|^2}{d(x)^{2n}} \, dx, \quad f\in H^{n}_0((0,b)).  \lb{8.9}
\end{equation}
In all cases $(\alpha)$--$(\gamma)$, if $f \not \equiv 0$, the inequalities \eqref{8.8a}, \eqref{8.8b}, and 
\eqref{8.9} are strict. \\[1mm] 
$(iii)$ The constant $[(2n-1)!!]^{2}/2^{2n}$ is sharp in all cases $(\alpha)$--$(\gamma)$ in item $(ii)$. 
\end{theorem}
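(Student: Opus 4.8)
The plan is to reduce everything to the half-line theory already established, exploiting two facts special to a compact interval: the absolute-continuity hypotheses automatically force all lower-order derivatives into $L^2$, and Glazman's computation from Theorem \ref{t4.1} survives the appearance of a boundary term at the non-singular endpoint because that term carries a favorable sign and may simply be discarded.

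For item $(i)$ I would first note that $f\in H_n([0,b])$ has $f^{(0)},\dots,f^{(n-1)}\in AC([0,b])$, hence these are continuous and therefore bounded on the compact interval $[0,b]$, so $f^{(j)}\in L^2((0,b))$ for $j=0,\dots,n-1$; together with $f^{(n)}\in L^2((0,b))$ this gives $f\in H^n((0,b))$ and proves \eqref{8.8}. Since the conditions $f^{(j)}(0)=f^{(j)}(b)=0$, $j=0,\dots,n-1$, are exactly those cutting out $H^n_0((0,b))$ inside $H^n((0,b))$ on a bounded interval (the reverse inclusion using the embedding $H^n((0,b))\hookrightarrow C^{n-1}([0,b])$), the set equality \eqref{8.7} follows. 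For norm equivalence, $\|f^{(n)}\|_{L^2((0,b))}\le\|f\|_{H^n((0,b))}$ is trivial; the reverse bound follows by applying item $(ii)(\alpha)$ (with $a=0$, $c=b$) to each $f^{(j)}\in H_{n-j}([0,b])$, $j=0,\dots,n-1$, together with $\int_0^b|f^{(j)}|^2\le b^{2(n-j)}\int_0^b|f^{(j)}|^2x^{-2(n-j)}\,dx$, and summing.

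For item $(ii)$ the core is $(\alpha)$. After the translation $x\mapsto x-a$ I may take $a=0$ and set $L=c-a$. I would first record the finite-interval analogues of Theorem \ref{t3.2}$(i),(iii)$: the representation $f^{(n-j-1)}(x)/x^{j+1}=x^{-(j+1)}\int_0^x t^{j}\,[f^{(n-j)}(t)/t^{j}]\,dt$ and an application of Theorem \ref{t2.1} on $(0,L)$ (where the governing constant $K$ is only smaller than on the half-line) yield $f^{(n-j)}/x^{j}\in L^2((0,L))$ and $\lim_{x\downarrow0}[f^{(j)}(x)]^2/x^{2(n-j)-1}=0$. I would then rerun Glazman's computation: integrating $\int_0^L[f^{(i)}]^2/x^{2m}\,dx$ by parts produces the boundary term $-[(2m-1)L^{2m-1}]^{-1}[f^{(i)}(L)]^2\le0$ (with $m=n-i\ge1$), which is dropped, after which Cauchy--Schwarz and iteration give \eqref{8.8a}. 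Inequality $(\beta)$ follows from $(\alpha)$ under the reflection $x\mapsto a+c-x$, which turns the endpoint condition at $c$ into a left-endpoint condition and $(c-x)$ into the distance to the new left endpoint. For $(\gamma)$ I would split $\int_0^b|f|^2/d(x)^{2n}\,dx$ at the midpoint $b/2$, apply $(\alpha)$ on $(0,b/2)$ (using $f^{(j)}(0)=0$) and $(\beta)$ on $(b/2,b)$ (using $f^{(j)}(b)=0$), and add. Strictness in all cases reduces, exactly as after \eqref{4.5a}, to the Cauchy--Schwarz equality analysis: equality forces $f$ to coincide near the singular endpoint with a power of the form \eqref{4.5c}, incompatible with the boundary conditions unless $f\equiv0$ (and the discarded boundary term furnishes additional strictness).

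For item $(iii)$ I would exhibit near-extremizers. For $(\alpha)$ with $a=0$ the pure powers $g_\sigma(x)=x^{n+\sigma}$, $\sigma>-1/2$, lie in the admissible class and satisfy $g_\sigma^{(n)}(x)=\big[\prod_{j=1}^n(j+\sigma)\big]x^{\sigma}$ and $g_\sigma(x)/x^n=x^{\sigma}$, so the quotient $\int_0^L|g_\sigma^{(n)}|^2\,dx\big/\int_0^L|g_\sigma|^2x^{-2n}\,dx$ equals $\big[\prod_{j=1}^n(j+\sigma)\big]^2$ identically and tends to $\big[\prod_{j=1}^n(j-\tfrac12)\big]^2=[(2n-1)!!]^2/2^{2n}$ as $\sigma\downarrow-1/2$; this is the computation underlying Theorem \ref{t6.1}. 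For $(\gamma)$ I localize: with $g_\sigma(x)=x^{n+\sigma}\eta(x)$, $\eta\in C_0^\infty([0,b))$ equal to $1$ on $[0,b/2]$, one has $g_\sigma\in H^n_0((0,b))$, while the dominant contributions to numerator and denominator come from $[0,b/2]$, where $d(x)=x$ and $g_\sigma=x^{n+\sigma}$; both blow up like $(1+2\sigma)^{-1}$ as $\sigma\downarrow-1/2$, the cutoff region contributing only $O(1)$, so the quotient again converges to the Birman constant. Case $(\beta)$ follows by reflection. The only genuinely new point relative to the half-line is the boundary term at the non-singular endpoint in $(\alpha)$, and the crux is merely to verify its sign is favorable; the remaining effort is the bookkeeping of the finite-interval integrability via Theorem \ref{t2.1} and of the lower-order cutoff contributions in the $(\gamma)$-sharpness construction.
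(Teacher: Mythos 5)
Your proposal is correct and follows the same overall strategy as the paper: prove $(\alpha)$ by rerunning the half-line argument (integration by parts, Cauchy--Schwarz, iteration) while discarding the boundary term at the non-singular endpoint $c$, which enters with a favorable sign; obtain $(\beta)$ by reflection and $(\gamma)$ by splitting at the midpoint $b/2$ exactly as in \eqref{8.13}; identify $H_{n}([0,b])$ with $H^n_0((0,b))$ through the standard boundary-value characterization of $H^n_0$; and get sharpness from the powers $(x-a)^{n+\sigma}$ as $\sigma\downarrow -1/2$, as in Theorem \ref{t6.1}. Three local differences are worth recording. First, for the norm equivalence in item $(i)$ the paper invokes the Friedrichs inequality from \cite{EE89} (or the results in \cite{Bu98}), whereas you derive it from $(\alpha)$ itself combined with the trivial bound $x^{-2(n-j)}\geq b^{-2(n-j)}$ on $(0,b)$; this is self-contained and non-circular provided $(\alpha)$ is proved before $(i)$'s norm statement, since $(\alpha)$ nowhere uses the Sobolev identification. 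Second, for strictness the paper computes directly that a nonzero pure power $d(x-a)^{\mu}$, $\mu>n-1/2$, renders \eqref{8.8a} strict because $[\mu(\mu-1)\cdots(\mu-n+1)]^2>[(2n-1)!!]^2/2^{2n}$; you instead observe that equality would also force the discarded boundary terms $[f^{(i)}(c)]^2$ to vanish, which annihilates the pure power. Note that your phrase ``incompatible with the boundary conditions'' is inaccurate as stated---in case $(\alpha)$ there is no condition at $c$, so pure powers \emph{are} admissible---but your parenthetical about the discarded boundary term is precisely the needed repair, so the argument stands. Third, for sharpness in $(\gamma)$ the paper argues by contradiction (a better constant in \eqref{8.9} would yield a better constant on one of the half-intervals, contradicting $(\alpha)$ or $(\beta)$); your explicit cutoff family $x^{n+\sigma}\eta(x)$, $\eta\equiv 1$ on $[0,b/2]$, $\eta\in C_0^{\infty}([0,b))$, is more detailed and in fact patches a subtlety in that contradiction argument: the near-extremizers for $(\alpha)$ on $(0,b/2)$ do not satisfy the boundary conditions at $b$ required for membership in $H^n_0((0,b))$, so some such cutoff (with the $O(1)$ error analysis you supply) is needed in any case.
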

%%%%%%
\begin{proof}
$(i)$ Equality of $H_n([0,b])$ with the standard Sobolev space $H^n_0((0,b))$ in \eqref{8.7} (and hence the fact \eqref{8.8}) follows from \cite[p.~29]{Bu98} (discussing the endpoint behavior of 
$f \in H^n((0,b))$ at 
$\{0, b\}$) and especially, from \cite[Theorem~2, p.~127 and Corollary~6, p.~128]{Bu98}. 
Alternatively, one can exploit the boundary conditions $f^{(j)} (0) = f^{(j)} (b) = 0$, 
$0 \leq j \leq n$, and combine \cite[Corollary~V.3.21]{EE89} and the Friedrichs inequality 
\cite[p.~242]{EE89},
\begin{equation}
\big\|f^{(j)}\big\|_{L^2((0,b))} \leq C \big\|f^{(n)}\big\|_{L^2((0,b))}, \quad f \in H^n_0((0,b)), 
\end{equation}
with $C = C(j,n,b) \in (0,\infty)$ independent of $f \in H^n_0((0,b))$. 

For the rest of the proof we assume that $f$ is real-valued. 

To prove item $(ii)$ part $(\alpha)$ one first follows the proof of Theorem \ref{t4.4}, observing that for $k \in \bbN$,
\begin{align}
\int_{a}^{c} \frac{f(x)^2}{(x-a)^{2k}} \, dx
&  =-\frac{1}{2k-1}\left(  \left.  \frac{f(x)^2}{(x-a)^{2k-1}}\right\vert
_{a}^{c}-2\int_{a}^{c}\frac{f(x)f^{\prime}(x)}{(x-a)^{2k-1}} \, dx\right)
\text{ } \no \\
& =-\frac{1}{2k-1}\left(   \frac{f(c)^2}{(c-a)^{2k-1}} -2\int_{a}^{c}\frac{f(x)f^{\prime}(x)}{(x-a)^{2k-1}} \, dx\right)
\text{ } \no \\
&  \leq \frac{2}{2k-1}\int_{a}^{c}\frac{f(x)f^{\prime}(x)}{(x-a)^{2k-1}}dx, 
\end{align}
and then continues as in \eqref{4.14}.  \\
Part $(\beta)$ follows from $(\alpha)$ by reflecting about the interval midpoint.  \\
For part $(\gamma),$ one can follow the argument provided in \cite[Corollary~5.3.2]{Da95} in the context of the Hardy inequality $n=1$: Splitting the interval $(0,b)$ into $(0,b/2] \cup [b/2,b)$, exploiting the fact 
\begin{equation}
\int_{0}^{b/2} \dfrac{[f(x)]^2}{x^{2n}} + \int_{b/2}^b \dfrac{[f(x)]^2}{(b-x)^{2n}} 
= \int_{0}^{b/2} \dfrac{[f(x)]^2}{d(x)^{2n}} + \int_{b/2}^b \dfrac{[f(x)]^2}{d(x)^{2n}} 
= \int_{0}^{b} \dfrac{[f(x)]^2}{d(x)^{2n}},     \lb{8.13} 
\end{equation}
and then separately applying parts $(\alpha)$ to $(0,b/2)$, and $(\beta)$ to $(b/2,b)$, yields \eqref{8.9}. 

To prove strict inequality in \eqref{8.8a}--\eqref{8.9}, it suffices to consider part $(\alpha)$ 
since $(\beta)$ follows by reflection and either $(\alpha)$ or $(\beta)$ implies $(\gamma$). 
One infers from \eqref{4.5c} that functions that would yield equality are of the type 
\begin{equation}
g_0(x) = c_{n-1} (x-a)^{\lambda + n-1} + c_{n-2} (x-a)^{n-2} +  c_{n-3}(x-a)^{n-3} + \dots +  c_{1}(x-a) + c_{0}.
\end{equation}
The fact $g_0^{(j)}(a)=0$ for $j=0,1, \dots, n-2$ shows
\begin{equation}
c_0 = c_1 = \dots = c_{n-2} = 0,
\end{equation}
so that
\begin{equation}
g_0(x) = c_{n-1}(x-a)^{\lambda + n-1}. \lb{0.10}
\end{equation}
Equation \eqref{0.10} suggests equality in \eqref{8.8a} holds only for functions of the form
\begin{equation}
g_0(x) = d (x-a)^{\mu} 
\end{equation}
for some $d \in \C$, $\mu \in \bbR$. To prove $d=0$, we will argue as follows. First, one 
notes that $g_0^{(n)} \in L^{2}((a,c))$ implies
\begin{equation}
2(\mu - n) > -1 \, \text{ or, } \,  \mu > n - 1/2.    \lb{0.14}
\end{equation} 
Inductively, one sees that
\begin{equation}
[\mu(\mu - 1) \cdots (\mu - n+1)]^2 > \frac{[(2n-1)!!]^{2}}{2^{2n}}, \quad n \in \bbN.
\end{equation}
Computing the left side of \eqref{8.8a} then yields
\begin{align}
\int_{a}^{c} \big[g_0^{(n)}(x)\big]^{2}dx 
&= |d|^2 [\mu(\mu - 1) \cdots (\mu - n+1)]^2 \int_{a}^{c} (x-a)^{2(\mu -n)} dx  \\
&> |d|^2 \frac{[(2n-1)!!]^{2}}{2^{2n}}\int_{a}^{c} (x-a)^{2(\mu -n)} dx \\
&= |d|^2 \frac{[(2n-1)!!]^{2}}{2^{2n}}\int_{a}^{c} \frac{[g_0(x)]^2}{(x-a)^{2n}} \, dx,
\end{align}
contradicting equality in \eqref{8.8a}, unless $d=0$.

To prove item $(iii)$ for case $(\alpha)$ one chooses $f_{\sigma}(x) = (x-a)^{\sigma} \chi_{(a,c)}$ and then proceeds as in the proof of Theorem \ref{t6.1}. To settle case $(\beta)$ one uses case $(\alpha)$ combined with reflection about the interval midpoint. If in case $(\gamma)$, the constant $[(2n-1)!!]^{2}/2^{2n}$ would not be optimal and a larger constant should exist, then considering the two intervals $(0,b/2)$ and 
$(b/2,b)$ would lead to a larger constant on at least one of them as well, contradicting cases $(\alpha)$ or 
$(\beta)$. 
\end{proof}
%%%%%%% 

%%%%%%%%%%%%%%%%%%%%%%%%%%%%%%%
%%%%%%%%%%%%%%%%%%%%%%%%%%%%%%%
\section{The Vector-Valued Case} 
\lb{s9}
%%%%%%%%%%%%%%%%%%%%%%%%%%%%%%%
%%%%%%%%%%%%%%%%%%%%%%%%%%%%%%%

In this section we indicate that all results described thus far extend to the vector-valued 
case in which $f$ is not just complex-valued, but actually, $\cH$-valued, with $\cH$ a separable, complex Hilbert space.

To set the stage we briefly review some facts on Bochner integrability and associated vector-valued $L^p$- and Sobolev spaces. 

Regarding details of the Bochner integral we refer, for instance, to \cite[p.\ 6--21]{ABHN01},
\cite[p.\ 44--50]{DU77}, \cite[p.\ 71--86]{HP85}, \cite[Sect.~4.2]{Ku78}, \cite[Ch.\ III]{Mi78}, 
\cite[Sect.\ V.5]{Yo80}. 
In particular, if $(a,b) \subseteq \bbR$ is a finite or infinite interval and $\cB$ a Banach space, 
and if $p\ge 1$, the symbol $L^p((a,b);dx;\cB)$, in short, $L^p((a,b);\cB)$, whenever Lebesgue measure is understood, denotes the set of equivalence classes of strongly measurable $\cB$-valued functions which differ at most on sets of Lebesgue measure zero, such that 
$\|f(\cdot)\|_{\cB}^p \in L^1((a,b))$. The
corresponding norm in $L^p((a,b);\cB)$ is given by
\begin{equation}
\|f\|_{L^p((a,b);\cB)} = \bigg(\int_{(a,b)} \|f(x)\|_{\cB}^p \, dx\bigg)^{1/p}
\end{equation}
and $L^p((a,b);\cB)$ is a Banach space.

If $\cH$ is a separable Hilbert space, then so is $L^2((a,b);\cH)$ (see, e.g.,
\cite[Subsects.\ 4.3.1, 4.3.2]{BW83}, \cite[Sect.\ 7.1]{BS87}).

One recalls that by a result of Pettis \cite{Pe38}, if $\cB$ is separable, weak
measurability of $\cB$-valued functions implies their strong measurability.

A map $f:[c,d] \to \cB$ (with $[c,d] \subset (a,b)$) is called \textit{absolutely continuous 
on $[c,d]$}, denoted by $f \in AC([c,d]; \cB)$, if 
\begin{equation}
f(x)= f(x_0) + \int_{x_0}^x g(t) \, dt, \quad x_0, x \in [c,d], 
\end{equation}
for some $g \in L^1((c,d);\cB)$. In particular, $f$ is then 
strongly differentiable a.e.\ on $(c,d)$ and
\begin{equation}
f'(x) = g(x) \, \text{ for a.e.\ $x \in (c,d)$}.
\end{equation}
Similarly, $f:[c,d] \to \cB$ is called \textit{locally absolutely continuous}, denoted by 
$f \in AC_{loc}([c,d]; \cB)$, if $f \in AC([c',d']; \cB)$ on any closed subinterval $[c',d'] \subset (c,d)$. 

Sobolev spaces $W^{n,p}((a,b); \cB)$ for $n\in\bbN$ and $p\geq 1$ are defined as follows: $W^{1,p}((a,b);\cB)$ is the set of all
$f\in L^p((a,b);\cB)$ such that there exists a $g\in L^p((a,b);\cB)$ and an
$x_0\in(a,b)$ such that
\begin{equation}
f(x)=f(x_0)+\int_{x_0}^x g(t) \, dt  \, \text{ for a.e.\ $x \in (a,b)$.}
\end{equation}
In this case $g$ is the strong derivative of $f$, $g=f'$. Similarly,
$W^{n,p}((a,b);\cB)$ is the set of all $f\in L^p((a,b);\cB)$ so that the first $n$ strong
derivatives of $f$ are in $L^p((a,b);\cB)$. Finally, $W^{n,p}_{\rm loc}((a,b);\cB)$ is
the set of $\cB$-valued functions defined on $(a,b)$ for which the restrictions to any
compact interval $[\alpha,\beta]\subset(a,b)$ are in $W^{n,p}((\alpha,\beta);\cB)$.
In particular, this applies to the case $n=0$ and thus defines $L^p_{\rm loc}((a,b);\cB)$.
If $a$ is finite we may allow $[\alpha,\beta]$ to be a subset of $[a,b)$ and denote the
resulting space by $W^{n,p}_{\rm loc}([a,b);\cB)$ (and again this applies to the case
$n=0$).

Following a frequent practice (cf., e.g., the discussion in \cite[Sect.\ III.1.2]{Am95}), we
will call elements of $W^{1,1} ([c,d];\cB)$, $[c,d] \subset (a,b)$ (resp.,
$W^{1,1}_{\rm loc}((a,b);dx;\cB)$), strongly absolutely continuous $\cB$-valued functions
on $[c,d]$ (resp., strongly locally absolutely continuous $\cB$-valued functions
on $(a,b)$), but caution the reader that unless $\cB$ possesses the Radon--Nikodym
(RN) property, this notion differs from the classical definition
of $\cB$-valued absolutely continuous functions (we refer the interested reader
to \cite[Sect.\ VII.6]{DU77} for an extensive list of conditions equivalent to $\cB$ having the
RN property). Here we just mention that reflexivity of $\cB$ implies the RN property.

In the special case $\cB = \bbC$, we omit $\cB$ and just write
$L^p_{(loc)}((a,b))$, as usual.

In the following we will typically employ the special case $p=2$ and use a complex, separable Hilbert space $\cH$ for $\cB$, denoting the corresponding Sobolev spaces by $H^{n}((a,b);\cH)$. The inner product in $L^2((a,b); \cH)$, in obvious notation, then reads
\begin{equation}
(f, g)_{L^2((a,b); \cH)} = \int_a^b (f(x), g(x))_{\cH} \, dx, \quad f, g \in L^2((a,b); \cH). 
\end{equation}
In other words, $L^2((a,b); \cH)$ can be identified with the constant fiber direct integral of Hilbert spaces,
\begin{equation}
L^2((a,b); \cH) \simeq \int^{\oplus}_{(a,b)} \cH \, dx. 
\end{equation}

For applications of these concepts to Schr\"odinger operators with operator-valued potentials we refer to \cite{GWZ13}; applications to scattering theory for multi-dimensional Schr\"odinger operators are studied in great detail in \cite[Chs.~IV, V]{Ku78}. The latter reference motivated us to add the present section. 

Before stating the sequence of Birman inequalities in the $\cH$-valued context, we recall a few basic properties of Bochner integrals which illustrate why all results in Sections \ref{s2}--\ref{s8} 
in the special complex-valued case (i.e., $\cH = \bbC$) carry over verbatim to the vector-valued situation.

As representative examples we mention, for instance, 
\begin{align}
&\bigg\|\int_{(a,b)} f(x) \, dx \bigg\|_{\cH} \leq \int_{(a,b)} \|f(x)\|_{\cH} \, dx, \quad 
f \in L^1((a,b); \cH),   \\[1mm] 
& \|f g\|_{L^1((a,b); \cH)} \leq \|f\|_{L^2((a,b); \cH)} \|g\|_{L^2((a,b))}, \quad f \in L^2((a,b); \cH), \; 
g \in L^2((a,b)),    \no \\
& \quad \text{with equality if and only if for some $(0,0) \neq (\alpha, \beta) \in \bbR^2$,}    \no \\ 
& \quad \text{$\alpha \|f(x)\|^2_{\cH} = \beta |g(x)|^2$ for a.e.~$x \in (a,b)$,} \\
& \|(f(x), g(x))_{\cH}\|_{L^1((a,b))} \leq \|f\|_{L^2((a,b);\cH)} \|g\|_{L^2((a,b);\cH)}, \quad 
f,g \in L^2((a,b); \cH),    \no \\
& \quad \text{with equality if and only if for some $(0,0) \neq (\alpha, \beta) \in \bbR^2$,} \no \\ 
& \quad \text{$\alpha \|f(x)\|^2_{\cH} = \beta \|g(x)\|^2_{\cH}$ for a.e.~$x \in (a,b)$,} \\
& \int_c^d (f'(x), g(x))_{\cH} \, dx + \int_c^d (f(x), g'(x))_{\cH} \, dx = (f(d), g(d))_{\cH} 
- (f(c), g(c))_{\cH},    \no \\
& \hspace*{6cm} f,g \in AC([a,b]; \cH), \; (c,d) \subseteq (a,b).
\end{align}

Given these preliminaries we now introduce the spaces 
\begin{align}
& H_{n}([0,\infty); \cH):= \big\{f:[0,\infty)\rightarrow \cH  \, \big| \,  f^{(j)}\in 
AC_{loc}([0,\infty); \cH); \, f^{(n)}\in L^{2}((0,\infty); \cH);    \no \\
& \hspace*{4.7cm}  f^{(j)}(0)=0, \, j=0,1,\dots,n-1\big\}, \quad n \in \bbN.   \lb{9.14} 
\end{align}
As in the scalar context, the space $H_{n}([0,\infty); \cH)$ is a Hilbert space when endowed 
with the inner product 
\begin{equation}
(f,g)_{H_{n}([0,\infty); \cH)}:=\int_{0}^{\infty} \big(f^{(n)}(x), g^{(n)}(x)\big)_{\cH} \, dx, \quad f,g\in
H_{n}([0,\infty); \cH), \lb{9.15} 
\end{equation}
and norm 
\begin{equation}
\|f\| _{H_{n}([0,\infty); \cH)}=\big\| f^{(n)}\big\| _{L^2((0,\infty); \cH)},   \quad 
f\in H_{n}([0,\infty); \cH).     \lb{9.15a} 
\end{equation} 
Similarly, introducing 
\begin{align}
& \hatt H_{n}((0,\infty); \cH):= \big\{f:(0,\infty)\rightarrow \cH \, \big| \, f^{(j)}\in
AC_{loc}((0,\infty); \cH), \, j=0,1,\dots,n-1;  \no \\
& \hspace*{5.2cm} f^{(n)},f/x^{n}\in L^{2}((0,\infty); \cH)\big\}, \quad n \in \bbN,   \lb{9.16} 
\end{align}
one proves as in the scalar context that
\begin{equation}
H_{n}([0,\infty); \cH)= \hatt H_{n}((0,\infty); \cH), \quad n \in \bbN,     \lb{9.17} 
\end{equation}
and that $C_0^{\infty}((0,\infty); \cH)$ is dense in $H_{n}([0,\infty); \cH)$. For the latter assertion it suffices to replace \eqref{3.6} by 
\begin{equation}
T_{\ol{g_0}}(\varphi):= \int_{0}^{\infty} (g_{0}(x), \varphi(x))_{\cH} \, dx,   
\quad \varphi \in C_0^{\infty}((0,\infty); \cH).    \lb{9.18} 
\end{equation}

These facts are shown as in the scalar context upon introducing the a.e.\ nonnegative 
Lebesgue measurable weight function $w: (a,b)\rightarrow\bbR$ and 
$\varphi,\psi: (a,b)\rightarrow\bbC $ Lebesgue measurable functions satisfying conditions 
$(i)$--$(iii)$ in Theorem \ref{t2.1}.

Define the linear operators $A, B:L^{2}((a,b);wdx; \cH)\rightarrow 
L_{loc}^{2}((a,b);wdx; \cH)$ by
\begin{equation}
(Af)(x):=\varphi(x)\int_{x}^{b}\psi(t)f(t)w(t)dt, \quad f\in L_{loc}^{2}((a,b);wdx; \cH), 
\end{equation}
and
\begin{equation}
(Bf)(x):=\psi(x)\int_{a}^{x}\varphi(t)f(t)w(t)dt, \quad f\in L_{loc}^{2}((a,b);wdx; \cH),
\end{equation}
and the function $K:(a,b)\rightarrow \bbR $ by
\begin{equation}
K(x):=\left(  \int_{a}^{x}\left\vert \varphi(t)\right\vert ^{2}w(t)dt\right)
^{1/2}\left(  \int_{x}^{b}\left\vert \psi(t)\right\vert ^{2}w(t)dt\right)
^{1/2}.
\end{equation}
Then again $A$ and $B$ are bounded linear operators in $L^{2}((a,b);wdx; \cH)$ if and only if
\begin{equation}
K:=\sup_{x\in (a,b)}K(x)<\infty.
\end{equation}
Moreover, if $K < \infty$, then $A$ and $B$ are adjoints of each other in 
$L^{2}((a,b);wdx; \cH)$, with 
\begin{align}
\begin{split} 
\|Af\|_{L^{2}((a,b);wdx; \cH)} = \|Bf\| _{L^{2}((a,b);wdx; \cH)} 
\leq 2K\|f\| _{L^{2}((a,b);wdx; \cH)},&    \\
\quad f\in L^{2}((a,b);wdx; \cH).&    \lb{9.23}
\end{split} 
\end{align}
Furthermore, the constant
$2K$ in \eqref{2.7} is best possible; that is,
\begin{equation}
\| A\|_{\cB(L^2((a,b); wdx; \cH))}= \|B\|_{\cB(L^2((a,b); wdx; \cH))} =2K. \lb{9.24}
\end{equation}

Also Theorem \ref{t3.2} extends to the present vector-valued situation in the following form: 
Let $f\in H_{n}([0,\infty); \cH)$, then
\begin{align}
& f^{(n-j)}/x^{j}\in L^{2}((0,\infty); \cH), \quad j=0,1,\dots, n,    \\
& \lim_{x\uparrow\infty}\dfrac{\|f^{(j)}(x)\|^{2}_{\cH}}{x^{2n-2j-1}}=0, 
\quad j=0,1,\dots,n-1,    \\
& \lim_{x\downarrow 0}\dfrac{\|f^{(j)}(x)\|^{2}_{\cH}}{x^{2n-2j-1}}=0, 
\quad j=0,1,\dots,n-1.
\end{align}

With these preparations at hand, we can now formulate the vector-valued extension of 
Birman's sequence of Hardy--Rellich-type inequality: 
%%%%%%
\begin{theorem} \lb{t9.1} 
For $0\neq f\in H_{n}([0,\infty); \cH),$ one has 
\begin{equation}
\int_{0}^{\infty} \big\|f^{(n)}(x)\big\|^{2}_{\cH} \, dx > 
\frac{[(2n-1)!!]^{2}}{2^{2n}}\int_{0}^{\infty} \frac{\|f(x)\|^2_{\cH}}{x^{2n}} \, dx, \quad n \in \bbN. 
\lb{9.28}
\end{equation}
\end{theorem}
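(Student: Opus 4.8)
The plan is to reproduce, essentially verbatim, the induction proof of Theorem \ref{t4.4}, with the scalar quantities $f(x)^2$ and $f(x)f'(x)$ replaced by $\|f(x)\|_{\cH}^2$ and $\Re(f(x),f'(x))_{\cH}$, respectively. The single structural identity that drives everything is
\begin{equation*}
\frac{d}{dx}\|f(x)\|_{\cH}^2 = 2\,\Re\big(f(x),f'(x)\big)_{\cH},
\end{equation*}
which, together with $f(0)=0$, yields $\|f(x)\|_{\cH}^2 = 2\int_0^x \Re(f(t),f'(t))_{\cH}\,dt$, the exact analog of \eqref{4.2}. All the analytic ingredients are already in place: the vector-valued version of Theorem \ref{t3.2} recorded just above the statement (giving $f^{(n-j)}/x^{j}\in L^2((0,\infty);\cH)$ and the vanishing of $\|f^{(j)}(x)\|_{\cH}^2/x^{2n-2j-1}$ as $x\downarrow 0$ and $x\uparrow\infty$), the Bochner integration-by-parts formula, and the two Cauchy--Schwarz inequalities listed at the start of this section.

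Concretely, I would first prove the non-strict inequality by induction on $n$. For $n=1$, integration by parts with the boundary terms annihilated by the limit facts gives $\int_0^\infty x^{-2}\|f(x)\|_{\cH}^2\,dx = 2\int_0^\infty x^{-1}\Re(f(x),f'(x))_{\cH}\,dx$; bounding $\Re(f,f')_{\cH}\le \|f\|_{\cH}\|f'\|_{\cH}$ pointwise, applying the integral Cauchy--Schwarz inequality, and then \eqref{4.7} reproduces Hardy's inequality precisely as in \eqref{4.12}. For the inductive step one uses the inclusion $f\in H_n([0,\infty);\cH)\Rightarrow f'\in H_{n-1}([0,\infty);\cH)$, the vector analog of \eqref{1.8} (immediate from the vector Theorem \ref{t3.2}\,$(i)$ with $j=n-1$), so that the induction hypothesis applies to $f'$; the same integration-by-parts/Cauchy--Schwarz/\eqref{4.7} chain as in \eqref{4.13}--\eqref{4.16}, followed by maximizing over $\varepsilon>0$, then produces the constant $[(2n-1)!!]^2/2^{2n}$.

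The main obstacle is the \emph{strict} inequality, since equality in the vector Cauchy--Schwarz step is more delicate than in the scalar case. I would settle it by a coordinate reduction rather than by tracing equality cases: fix an orthonormal basis $\{e_k\}_{k}$ of the separable space $\cH$ and set $f_k(x):=(e_k,f(x))_{\cH}$. Since the inner product against a fixed vector is a bounded linear functional, one has $f_k^{(j)}(x)=(e_k,f^{(j)}(x))_{\cH}$, whence $f_k^{(j)}\in AC_{loc}([0,\infty))$, $|f_k^{(n)}(x)|\le\|f^{(n)}(x)\|_{\cH}$ gives $f_k^{(n)}\in L^2((0,\infty))$, and $f_k^{(j)}(0)=0$; thus each $f_k\in H_n([0,\infty))$ in the scalar sense. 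Tonelli's theorem (all integrands nonnegative) then yields $\int_0^\infty\|f^{(n)}\|_{\cH}^2\,dx=\sum_k\int_0^\infty|f_k^{(n)}|^2\,dx$ and likewise for the weighted term. Because $f\neq 0$ forces $f^{(n)}\neq 0$ in $L^2((0,\infty);\cH)$, at least one component $f_{k_0}$ is a nonzero element of $H_n([0,\infty))$; applying the scalar \emph{strict} inequality of Theorem \ref{t4.4} to $f_{k_0}$ and the non-strict inequality to the remaining components, and summing, gives the strict inequality \eqref{9.28}. This coordinate reduction in fact reproves the non-strict bound as well, so one could bypass the induction entirely; I retain the direct argument because it is exactly what the machinery assembled above in this section is designed to carry.
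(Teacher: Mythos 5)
Your proposal is correct, and it splits naturally into two parts relative to the paper. The inductive half is exactly what the paper intends: the paper's proof of Theorem \ref{t9.1} consists of the single remark that one can follow either proof from Section \ref{s4} verbatim, which is precisely your replacement of $f(x)^2$ by $\|f(x)\|_{\cH}^2$ and $f(x)f'(x)$ by $\Re(f(x),f'(x))_{\cH}$, supported by the vector-valued Theorem \ref{t3.2} and the Bochner-integral facts assembled in Section \ref{s9}. Where you genuinely depart from the paper is the strictness argument. The paper handles strictness the same way as in the scalar case, by tracing the equality cases in Cauchy--Schwarz; this is exactly why the preliminaries of Section \ref{s9} record the equality conditions ($\alpha\|f(x)\|_{\cH}^2=\beta\|g(x)\|_{\cH}^2$ a.e.\ for some $(0,0)\neq(\alpha,\beta)$) alongside the inequalities themselves, so that the scalar analysis of \eqref{4.5c} carries over. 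Your coordinate reduction avoids this entirely: expanding in an orthonormal basis, checking $f_k=(e_k,f(\cdot))_{\cH}\in H_n([0,\infty))$ componentwise (bounded functionals commute with Bochner integrals, so $f_k^{(j)}=(e_k,f^{(j)}(\cdot))_{\cH}$), and summing via Parseval--Tonelli reduces \eqref{9.28} to the scalar Theorem \ref{t4.4}; since the weighted sums are finite (each component obeys the non-strict bound, and $\sum_k\int|f_k^{(n)}|^2=\int\|f^{(n)}\|_{\cH}^2<\infty$), the strict inequality for one nonzero component $f_{k_0}$ survives summation. What each approach buys: the paper's route is self-contained in the vector-valued language and would extend to settings where no convenient basis decomposition exists, while yours is more elementary, exploits the (assumed) separability of $\cH$, and --- as you note --- proves the entire theorem, constant included, as a corollary of the scalar case, making the vector-valued induction redundant. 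Both are complete proofs.
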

%%%%%% 
For the proof of Theorem \ref{t9.1} one can now follow either of the two proofs given in Section \ref{s4}. Optimality of the constants is clear from the special case $\cH = \bbC$. 

The special case $n=1$, that is, the $\cH$-valued Hardy inequality (in fact, a weighted version of 
the latter) appeared in \cite[p.~4.50]{Ku78}. 

It remains to discuss the finite interval case $(0, b)$, $b \in (0,\infty)$. We start by introducing the 
set\footnote{Again, as noted at the beginning of Section \ref{s8}, one could introduce analogous 
spaces with the boundary conditions at $x=0$ 
and/or $x=b$ replaced by $L^2$-conditions for $\|f(\, \cdot \,)\|_{\cH}/x^n$ and 
$\|f(\, \cdot \,)\|_{\cH}/(b-x)^n$, respectively, but refrain from doing so at this point.}  
(with $n \in \bbN$),  
\begin{align} 
\begin{split} 
&H_{n}([0,b]; \cH):= \big\{f:[0,b]\rightarrow \cH \,\big| \,  f^{(n)}\in L^{2}((0,b); \cH); \, 
f^{(j)}\in AC([0,b]; \cH);    \\
& \hspace*{5cm} f ^{(j)}(0)=f^{(j)}(b)=0, \, j=0,1,\dots,n-1\big\},
\end{split} 
\end{align}
and the standard $\cH$-valued Sobolev spaces, 
\begin{align}
\begin{split}
& H^n((0,b); \cH) = \big\{f: [0,b] \to \cH \, \big| \, f^{(j)} \in AC([0,b]; \cH), \, j =0,1, \dots, n-1;     \\
& \hspace*{5.6cm}  f^{(k)} \in L^2((0,b); \cH), \, k = 0,1,\dots,n \big\},   
\end{split} \\
& H^n_0((0,b); \cH) = \big\{f \in H^n((0,b); \cH) \, \big| \,  f^{(j)}(0) =  f^{(j)}(b) = 0, \, j =0,1, \dots, n-1\big\}.  
\end{align}

Next, we derive an elementary version of an $\cH$-valued Friedrichs inequality as follows: 
Suppose $f \in H_1([0,b]; \cH)$, then 
\begin{equation}
f(x) = \int_0^x f'(t) \, dt, \quad x \in [0,b], \; f(0) = 0, 
\end{equation}
implies
\begin{equation}
\|f(x)\|_{\cH} \leq \int_0^x \|f'(t)\|_{\cH} \, dt \leq x^{1/2} \bigg(\int_0^x \|f'(t)\|_{\cH}^2\bigg)^{1/2} 
\leq b^{1/2} \|f'\|_{L^2((0,b); \cH)},    \lb{9.34} 
\end{equation}
and hence $\|f(\, \cdot \,)\|_{\cH} \in L^2((0,b))$. Thus, squaring and then integrating 
\eqref{9.34} with respect to $x$ from $0$ to $b$ yields 
\begin{equation}
\|f\|_{L^2((0,b);\cH)} \leq b \|f'\|_{L^2((0,b); \cH)}, \quad f \in H_1([0,b]; \cH).
\end{equation}
Consequently,
\begin{equation}
H_1([0,b]; \cH) = H^1_0((0,b); \cH),
\end{equation}
and iterating this process finally yields
\begin{equation}
H_n([0,b]; \cH) = H^n_0((0,b); \cH), \quad n \in \bbN. 
\end{equation}

%%%%%%
\begin{theorem} \lb{t9.2}
Let $n\in\bbN  $, $b \in (0, \infty),$ and define $H_{n}([0,b]; \cH)$ and
$H^n_0((0,b); \cH)$ as above. Then the following items $(i)$--$(iii)$ hold: \\[1mm] 
$(i)$ For each $n \in \bbN$, 
\begin{equation} 
H_{n}([0,b]; \cH) = H^n_0((0,b); \cH)   \lb{9.39}
\end{equation} 
as sets. In particular, 
\begin{equation}
f\in H_{n}([0,b]; \cH) \, \text{ implies } \, f^{(j)}\in L^{2}((0,b); \cH), \quad j=0,1,\dots,n.   \lb{9.40} 
\end{equation} 
In addition, the norms in $H_{n}([0,b]; \cH)$ $($cf.\ \eqref{9.15a}$)$ and $H^n_0((0,b); \cH)$ are equivalent. \\[1mm] 
$(ii)$ Recalling $d(x) = \min \{x, |b-x|\}$, $x \in (0,b)$, one has 
\begin{equation}
\int_{0}^{b}\big\|f^{(n)}(x)\big\|_{\cH}^{2} \, dx > \dfrac{[(2n-1)!!]^{2}
}{2^{2n}}\int_{0}^{b} \dfrac{\|f(x)\|_{\cH}^2}{d(x)^{2n}} \, dx, \quad f\in H^{n}_0((0,b); \cH) \backslash \{0\}.
\end{equation}
$(iii)$ The constant $[(2n-1)!!]^{2}/2^{2n}$ is sharp. 
\end{theorem}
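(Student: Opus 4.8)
The plan is to reduce Theorem \ref{t9.2} to its scalar counterpart, Theorem \ref{t8.2}, together with the vector-valued machinery already assembled in Section \ref{s9}. The essential observation is that all three parts of Theorem \ref{t8.2} were proved using only (a) integration-by-parts identities, (b) the Cauchy--Schwarz inequality, and (c) the elementary inequality \eqref{4.7}; every one of these has been given a verbatim $\cH$-valued analogue in the preliminary material of this section (the displayed Bochner-integral estimates and the integration-by-parts formula for $f,g \in AC([a,b];\cH)$).

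First I would dispose of part $(i)$. The equality \eqref{9.39} of $H_n([0,b];\cH)$ with the standard Sobolev space $H^n_0((0,b);\cH)$ has in fact already been established in the discussion immediately preceding the theorem, via the elementary $\cH$-valued Friedrichs inequality \eqref{9.34}--\eqref{9.35} and its iteration. The norm equivalence then follows exactly as in the scalar case: the $H_n$-norm is $\big\|f^{(n)}\big\|_{L^2((0,b);\cH)}$ by \eqref{9.15a}, while the $H^n_0$-norm controls, and is controlled by, this quantity through the Friedrichs estimates applied to each derivative $f^{(j)}$, $0 \leq j \leq n$.

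For part $(ii)$, I would first prove the half-line-type estimates $(\alpha)$ and $(\beta)$ of Theorem \ref{t8.2} in the $\cH$-valued setting, replacing $[f(x)]^2$ by $\|f(x)\|_{\cH}^2$ and $f(x)f'(x)$ by $\Re(f(x),f'(x))_{\cH}$ throughout. The single identity driving part $(\alpha)$ is the integration by parts
\begin{equation}
\int_{a}^{c}\frac{\|f(x)\|_{\cH}^2}{(x-a)^{2k}}\,dx
= -\frac{1}{2k-1}\bigg(\frac{\|f(c)\|_{\cH}^2}{(c-a)^{2k-1}}
- 2\int_{a}^{c}\frac{\Re(f(x),f'(x))_{\cH}}{(x-a)^{2k-1}}\,dx\bigg),
\end{equation}
after which the $\cH$-valued Cauchy--Schwarz inequality and \eqref{4.7} reproduce the scalar argument line for line. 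Part $(\gamma)$ then follows by the same splitting of $(0,b)$ into $(0,b/2]\cup[b/2,b)$ as in \eqref{8.13}, applying $(\alpha)$ on the left subinterval and $(\beta)$ on the right. Strictness of the inequality for $f\not\equiv 0$ is the one point requiring genuine care: in the scalar proof, strictness came from analyzing the extremal functions $g_0(x)=d(x-a)^{\mu}$ and showing $d=0$; in the $\cH$-valued case, equality in the Cauchy--Schwarz step would force $f'(x)$ to be a.e.\ a scalar multiple of $f(x)$ in $\cH$, which reduces the extremal analysis to a one-dimensional subspace of $\cH$ and hence to the already-settled scalar computation yielding the power $(x-a)^{\mu}$ with $\mu > n-1/2$; the strict inequality $[\mu(\mu-1)\cdots(\mu-n+1)]^2 > [(2n-1)!!]^2/2^{2n}$ then forces the vanishing of the putative extremizer exactly as before.

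The main obstacle I anticipate is \emph{not} in the inequality itself but in the sharpness claim, part $(iii)$. For optimality one must exhibit near-extremizers, and here the cleanest route is to reduce to the scalar sharpness already proved in Theorem \ref{t8.2}$(iii)$: fixing any unit vector $e\in\cH$ and setting $f_{\sigma}(x)=(x-a)^{\sigma}\chi_{(a,c)}(x)\,e$, one has $\|f_{\sigma}(x)\|_{\cH}=|(x-a)^{\sigma}\chi_{(a,c)}(x)|$, so the quotient of the two sides of the inequality is literally identical to the scalar quotient computed in the proof of Theorem \ref{t6.1}. Letting $\sigma\downarrow -1/2$ then undercuts any constant larger than $[(2n-1)!!]^2/2^{2n}$, establishing sharpness. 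Thus the vector-valued sharpness costs nothing beyond choosing a fixed direction $e$, and the whole theorem follows by transcribing Theorem \ref{t8.2} with $|\cdot|$ replaced by $\|\cdot\|_{\cH}$ and products $f g$ replaced by real parts of inner products $(f,g)_{\cH}$.
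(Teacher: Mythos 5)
Your proposal is correct and follows essentially the same route as the paper, whose entire proof consists of the remark that one can follow the scalar proof of Theorem \ref{t8.2} line by line using the vector-valued tools (Bochner-integral Cauchy--Schwarz, integration by parts in $AC([a,b];\cH)$, and the Friedrichs-type estimates \eqref{9.34}--\eqref{9.35}) assembled earlier in Section \ref{s9}. Your explicit fleshing-out --- part $(i)$ from the discussion preceding the theorem, part $(ii)$ by replacing $f f'$ with $\Re(f,f')_{\cH}$, and part $(iii)$ by evaluating the scalar near-extremizers along a fixed unit vector $e\in\cH$ --- is precisely what that transcription amounts to.
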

%%%%%% 
One can follow the special scalar case treated in the proof of Theorem \ref{t8.2} line by line.

%\medskip

%\appendix
%%%%%%%%%%%% Appendix A %%%%%%%%%%%%%%
%\section{Spectra of $T_n$} \lb{sA}
%\renewcommand{\theequation}{A.\arabic{equation}}
%\renewcommand{\thetheorem}{A.\arabic{theorem}}
%\setcounter{theorem}{0} \setcounter{equation}{0}
%%%%%%%%%%%%%%%%%%%%%%%%%%%%%%%%%%%%%%
%%%%%%%%%%%%%%%%%%%%%%%%%%%%%%%%%%%%%%

\medskip

%%%%%%%%%%%%%%%%%%%%%%%%%%%%%%%%%%%%%
\noindent
{\bf Acknowledgments.} 
We are indebted to Katie Elliott, Des Evans, Hubert Kalf, Roger Lewis, and Wilhelm Schlag 
for valuable discussions and hints to the literature. We also thank the anonymous referee for a 
careful reading of our manuscript and for very helpful comments. 
%%%%%%%%%%%%%%%%%%%%%%%%%%%%%%%%%%%%%

%%%%%%%%%%%%%%%%%%%%%%%%%%%%%%%%
%%%%%%%%%%%%%%%%%%%%%%%%%%%%%%%%
 
\end{document}